\definecolor{darkred}{rgb}{0.8,0.1,0.1}
\theoremstyle{plain}
\newtheorem{theo}{Theorem}[section]
\newtheorem{lem}[theo]{Lemma}
\newtheorem{propo}[theo]{Proposition}
\newtheorem{cor}[theo]{Corollary}
\theoremstyle{definition}
\newtheorem{defi}[theo]{Definition}
\newenvironment{ex}
{\pushQED{\qed}\exx}
{\popQED\endexx}
\newenvironment{rem}
{\pushQED{\qed}\remm}
{\popQED\endremm}
\newenvironment{conv}
{\pushQED{\qed}\convv}
{\popQED\endconvv}
\numberwithin{equation}{section}
\def\nn{\nonumber}
\def\bbK{\mathbb{K}}
\def\bbR{\mathbb{R}}
\def\bbZ{\mathbb{Z}}
\def\bbT{\mathbb{T}}
\def\Hom{\mathrm{Hom}}
\def\hom{\underline{\mathrm{hom}}}
\def\Sym{\mathrm{Sym}}
\def\odd{-}
\def\even{+}
\def\id{\mathrm{id}}
\def\dd{\mathrm{d}}
\def\1{I}
\def\oone{\mathbbm{1}}
\def\Vec{\mathbf{Vec}}
\def\Ch{\mathbf{Ch}}
\def\ad{\mathrm{ad}}
\def\g{\mathfrak{g}}
\def\h{\mathfrak{h}}
\def\Pol{\mathrm{Pol}}
\def\CE{\mathrm{CE}}
\newcommand{\pb}[2]{[\![#1,#2]\!]}
\newcommand{\pbbig}[2]{\big[\!\!\big[#1,#2\big]\!\!\big]}
\newcommand{\du}[2]{\langle #1,#2 \rangle}
\DeclareMathOperator*{\Mwedge}{\text{\raisebox{0.25ex}{\scalebox{0.7}{${\bigwedge}_{}$}}}}
\def\sk{\vspace{2mm}}
\let\@fnsymbol\@alph
\title{%
Shifted Poisson structures on higher Chevalley-Eilenberg algebras
}
\author{%
Cameron Kemp$^{1,a}$, Robert Laugwitz$^{1,b}$\ and\ Alexander Schenkel$^{1,2,c}$\vspace{4mm}\\
{\small ${}^1$ School of Mathematical Sciences, University of Nottingham,}\\
{\small University Park, Nottingham NG7 2RD, United Kingdom.}\vspace{2mm}\\
{\small ${}^2$ Dipartimento di Matematica, Universit{\`a} di Trento,}\\
{\small Via Sommarive 14, 38123 Povo (Trento), Italy.}\vspace{4mm}\\
{\small \begin{tabular}{ll}
Email: & ${}^a$~\href{mailto:cameron.kemp@nottingham.ac.uk}{\texttt{cameron.kemp@nottingham.ac.uk}}\\
&${}^b$~\href{mailto:robert.laugwitz@nottingham.ac.uk}{\texttt{robert.laugwitz@nottingham.ac.uk}}\\
& ${}^c$~\href{mailto:alexander.schenkel@unitn.it}{\texttt{alexander.schenkel@unitn.it}}
\vspace{2mm}
\end{tabular}
}
}
\date{January 2026}
\begin{document}

\maketitle

\begin{abstract}
\noindent This paper develops a graphical calculus to determine the $n$-shifted Poisson structures on finitely generated semi-free commutative differential graded algebras. When applied to the Chevalley-Eilenberg algebra of an ordinary Lie algebra, we recover Safronov's result that the $(n=1)$- and $(n=2)$-shifted Poisson structures in this case are given by quasi-Lie bialgebra structures and, respectively, invariant symmetric tensors. We generalize these results to the Chevalley-Eilenberg algebra of a Lie $2$-algebra and obtain $n\in\{1,2,3,4\}$ shifted Poisson structures in this case, which we interpret as semi-classical data of `higher quantum groups'.
\end{abstract}
\vspace{-1mm}

\paragraph*{Keywords:} derived algebraic geometry, shifted Poisson structures, 
$L_\infty$-algebras, $L_\infty$-bialgebras 
\vspace{-2mm}

\paragraph*{MSC 2020:} 14A30, 17B55, 17B62
\vspace{-2mm}

\tableofcontents

%%%%%%%%%%%%%%%%%%%%%%%%%%%%%%%%%%%%%%%%%%%%%%%%
%%%%%%%%%%%%%%%%%%%%%%%%%%%%%%%%%%%%%%%%%%%%%%%%

\section{\label{sec:intro}Introduction and summary}
The concept of Poisson structures on manifolds or algebraic varieties is ubiquitous in mathematics and physics.
Classical physical systems, when described within the Hamiltonian formalism,
come equipped with a canonical Poisson bracket $\{\,\cdot\,,\,\cdot\,\}:A\otimes A\to A$
on the algebra $A$ of functions on their phase space. This Poisson structure plays multiple important roles:
From the point of view of dynamics, the derivation $\{\,\cdot\,,H\}:A\to A$ defined by 
inserting the Hamiltonian $H\in A$ of the system into the Poisson bracket generates the 
time evolution of observables $O\in A$ through Hamilton's equations $\tfrac{d}{dt}O(t) = \{O(t),H\}$.
From the point of view of quantization, the Poisson structure provides an `initial datum'
for the deformation quantization problem which consists of deforming the commutative algebra 
$A$ of classical observables into a noncommutative algebra $A_\hbar$ of quantum observables.
We refer the reader to \cite{Poisson} for a thorough introduction to Poisson structures
and to \cite{Bordemann} and \cite{Esposito} for introductions to deformation quantization.
\sk

The theory of Poisson structures becomes even richer in (differential) graded
geometric contexts in which the algebra $A= \bigoplus_{i\in\bbZ}A^i$ 
is graded. This is due to the fact that in this case
the Poisson bracket $\{\,\cdot\,,\,\cdot\,\}:A\otimes A\to A$ can carry
a non-trivial degree, which alters both its geometric and deformation theoretic aspects.
Adopting the shifting conventions from the derived algebraic geometry 
literature \cite{PTVV,CPTVV} and \cite{Pridham,PridhamOutline},
an $n$-shifted Poisson structure has by definition an underlying Poisson bracket 
$\{\,\cdot\,,\,\cdot\,\}$ of the opposite degree $-n$, where $n\in\bbZ$ is any integer.
One of the prime examples is given by the antibracket
from the Batalin-Vilkovisky (BV) formalism \cite{Batalin,Schwarz}, which according
to these conventions is a $(-1)$-shifted Poisson structure.
While the quantization of ordinary (i.e.\ $0$-shifted) Poisson structures
yields noncommutative algebras, the BV quantization of $(-1)$-shifted 
Poisson structures yields cochain complexes
without any multiplication operation but a deformed differential. 
\sk

A systematic and powerful framework to study shifted Poisson structures
and their deformation quantizations is provided by derived algebraic geometry.
The works \cite{CPTVV,Pridham} develop a precise definition 
for the concept of an $n$-shifted Poisson structure
on a large class of spaces, called derived Artin stacks. 
Furthermore, there are general existence theorems for deformation 
quantizations in the case of positive shifts $n\geq 1$ \cite{CPTVV} 
and also in some specific cases of non-positive shifts $n\in\{-2,-1,0\}$ \cite{Pridham0,Pridham-2,Pridham-1}.
Using a simplified language, the general picture
which emerges from these works is as follows: An $n$-shifted
Poisson structure on a commutative dg-algebra $A$ 
is the `initial datum' for the quantization of $A$ into an $\mathbb{E}_{n+1}$-algebra,
or alternatively for the quantization of the symmetric monoidal dg-category 
${}_{A}\mathbf{Mod}$ of $A$-dg-modules
into an $\mathbb{E}_{n}$-monoidal dg-category. Here
$\mathbb{E}_{n}$ denotes the little $n$-disks operad, which encodes
the behavior of moving around $n$-dimensional disks in $\bbR^n$ without colliding them.
Since $\mathbb{E}_{n}$-operads become increasingly more commutative when increasing $n$,
which loosely speaking results from the additional flexibility in higher-dimensional $\bbR^n$ 
to avoid collisions of disks by moving through the new dimensions, this implies that 
quantizations of $n$-shifted Poisson structures become increasingly more
commutative objects for larger $n$.
\sk

The main aim of our present paper is to apply the concept of
$n$-shifted Poisson structures from derived algebraic geometry
to explore semi-classical aspects of a certain class of `higher quantum groups' 
which are based on higher Lie algebras. We believe that this provides 
a systematic and novel perspective towards a theory of `higher quantum groups' which
complements the existing approaches from the literature, see e.g.\ \cite{Majid2,Zhu,Voronov,Joao,Girelli,Lazarev}.
Our approach is inspired by results of Safronov
\cite{Safronov} which show that the semi-classical
data associated with ordinary quantum groups, namely quasi-Lie bialgebra structures
and invariant symmetric tensors, arise naturally as the $(n=1)$- and $(n=2)$-shifted Poisson 
structures on the classifying stack $\mathrm{B}G=[\mathrm{pt}/G]$ 
of an algebraic group $G$,
or its infinitesimal analogue given by the formal classifying stack 
$\mathrm{B}\g=[\mathrm{pt}/\g]$ of the Lie algebra $\g$ of $G$.
Our concrete proposal is to start from a Lie $N$-algebra $\g$,
which is a higher-categorical generalization of an ordinary Lie algebra,
and to interpret the positively $(n\geq 1)$-shifted Poisson structures on its
formal classifying stack $\mathrm{B}\g=[\mathrm{pt}/\g]$ as the semi-classical
data of `higher quantum groups'. The motivation behind this interpretation is that 
each such datum produces, through the deformation quantization results from \cite{CPTVV}, 
an $\mathbb{E}_n$-monoidal deformation of the symmetric monoidal dg-category 
$\mathbf{dgRep}(\g)$ of representations of $\g$ on cochain complexes, 
which we regard as the representation category of a `higher quantum group'.
Through a higher-categorical variant of Tannakian reconstruction,
which according to our knowledge is currently not yet developed, it might then be
possible to reconstruct from this deformed representation category an
underlying `higher quantum group' in the form of some homotopy algebraic 
generalization of a quasi-triangular Hopf algebra.
In the current paper we address the first step of the program outlined
above, i.e.\ the identification of the semi-classical data for `higher quantum groups'.
The second step concerning categorical deformation quantizations is covered
by the existence results in \cite{CPTVV}, however it is worthwhile to 
note that, due to the use of formality theorems, these quantizations
are rather difficult to describe explicitly. Recent works such as
\cite{KempLaugwitzSchenkel} and \cite{Karlsson} provide attempts 
towards more computational approaches to categorical deformation quantizations, 
but further work is required to make them easier accessible by computational methods.
The last step concerning a higher-categorical generalization of Tannakian reconstruction
is an interesting open problem for future work. It has the potential
to unravel and suggest a suitable homotopy algebraic generalization of the concept of a 
quasi-triangular Hopf algebra, which would open the door for a theory of 
`higher quantum groups' going beyond the examples based on higher Lie algebras from our work.
\sk

We will now explain our results in more detail by outlining the content of this paper.
In Section \ref{sec:prelim}, we recollect some basic definitions about cochain complexes
and commutative dg-algebras in order to fix our notation and to make this paper self-contained.
In Section \ref{sec:polyvectors}, we recall from \cite{CPTVV,Pridham}
the definitions of $n$-shifted polyvectors
and $n$-shifted Poisson structures on a commutative dg-algebra $A$. 
It is important to note that such $n$-shifted Poisson structures
do not only contain the datum of a single binary bracket 
$\{\,\cdot\,,\,\cdot\,\}$, or equivalently a bivector $\pi^{(2)}$,
but rather they contain an a priori infinite tower of homotopy-coherence
data acting as witnesses for the Jacobi identity, see Remark \ref{rem:tower}.
We then specialize these definitions to the cases of finitely generated
free and semi-free commutative dg-algebras and develop a graphical calculus
which is practically useful to visualize and analyze the algebraic properties of $n$-shifted Poisson structures.
(See also Remark \ref{rem:graphicaltoformula} for an illustration of the advantages of our graphical calculus
over element-wise algebraic expressions.)
The main results about our graphical calculus are stated in Corollaries \ref{cor:shiftedPoissonfree}
and \ref{cor:shiftedPoissonsemifree}. We observe in Remark \ref{rem:comparison}
that the derived geometric concept of an $n$-shifted Poisson structure
in this finitely generated semi-free context is equivalent to 
the $L_\infty[0,n-1]$-quasi-bialgebra structures from \cite[Definition 2.5]{Zhu} and 
we thus provide a geometric interpretation for such objects. 
\sk

In Section \ref{sec:higherLie},
we specialize our description of $n$-shifted Poisson structures
further to the case where $A = \CE(\g)$ is the Chevalley-Eilenberg algebra of
a Lie $N$-algebra $\g$. We show in Lemma \ref{lem:degreecounting}
that $n$-shifted Poisson structures on $A = \CE(\g)$ are necessarily trivial
for $n>2N$, which provides us with a useful guiding principle for our exploration.
For ordinary Lie algebras (i.e.\ $N=1$), we recover in Subsection \ref{subsec:Lie}
from our graphical calculus the characterization in \cite{Safronov}
of $(n=1)$- and $(n=2)$- shifted Poisson structures in terms of quasi-Lie bialgebras
and, respectively, invariant symmetric tensors, see
Propositions \ref{prop:2shiftedLie} and \ref{prop:1shiftedLie}.
The focus of Subsection \ref{subsec:2Lie} is on the case of Lie $2$-algebras,
which by Lemma \ref{lem:degreecounting} admit a priori non-trivial
positively $n$-shifted Poisson structures for all $n\in \{1,2,3,4\}$.
In the case of $n\in \{2,3,4\}$, we characterize these shifted Poisson 
structures explicitly by using our graphical calculus and find that they consist
of finitely many data, see Propositions \ref{prop:4shifted2Lie}, \ref{prop:3shifted2Lie}
and \ref{prop:2shifted2Lie}. The case of $n=1$ is exceptional and more difficult
because $1$-shifted Poisson structures consist of a priori infinitely many data, see Remark \ref{rem:1shifted2Lie}.
In the final Subsection \ref{subsec:explicit} we provide a collection of
explicit examples for shifted Poisson structures on the Chevalley-Eilenberg algebras
associated with Abelian Lie $2$-algebras, string Lie $2$-algebras and shifted cotangent Lie $2$-algebras.
Our reason for this particular choice of examples is that these Lie $2$-algebras appear
naturally in mathematical physics, e.g.\ in Abelian gerbes \cite{Brylinski}, in string theory \cite{Saemann}, 
and in higher-dimensional Chern-Simons theory \cite{Zucchini,SchenkelVicedo}.

%%%%%%%%%%%%%%%%%%%%%%%%%%%%%%%%%%%%%%%%%%%%%%%%
%%%%%%%%%%%%%%%%%%%%%%%%%%%%%%%%%%%%%%%%%%%%%%%%

\section{\label{sec:prelim}Preliminaries}
To fix our notation and conventions, we recall some basic material 
about cochain complexes and (commutative) differential graded algebras.
All vector spaces, algebras, etc., in this paper will be over a fixed field $\bbK$ of characteristic $0$.

\paragraph{Cochain complexes:} A \textit{cochain complex} $V = (V,\dd)$ consists of
a family $V = \{V^i\}_{i\in\bbZ}$ of vector spaces, labeled by integers $i\in\bbZ$ (called degree), 
and a family $\dd = \{\dd^i :  V^i\to V^{i+1}\}_{i\in\bbZ}$ of degree-increasing linear maps
(called differential) which square to zero, i.e.\ $\dd\,\dd=0$. A \textit{cochain map}
$f : V\to W$ is a family $f=\{f^i: V^i\to W^i\}_{i\in\bbZ}$ of degree-preserving linear maps
which commutes with the differentials, i.e.\ $f\,\dd_V = \dd_W\,f$. We denote by $\Ch$ the category
of cochain complexes and cochain maps.
\sk

The category $\Ch$ carries the following standard closed symmetric monoidal structure.
The tensor product $V\otimes W\in\Ch$ of two cochain complexes $V,W\in\Ch$
is given by
\begin{subequations}\label{eqn:Chtensor}
\begin{flalign}
(V\otimes W)^i \,:=\, \bigoplus_{j\in\bbZ}\big(V^j\otimes W^{i-j}\big) \quad,
\end{flalign}
for all $i\in\bbZ$, and the differential
\begin{flalign}
\dd(v\otimes w) \,:=\, (\dd v)\otimes w + (-1)^{\vert v\vert}\,v\otimes (\dd w)\quad,
\end{flalign}
\end{subequations}
for all homogeneous $v\in V$ and all $w\in W$, where $\vert\,\cdot\,\vert$ indicates the degree.
The monoidal unit is given by $\bbK\in\Ch$, regarded as a cochain complex concentrated in degree
$0$ with trivial differential. The symmetric braiding is defined by the Koszul sign rule
\begin{flalign}\label{eqn:Chbraiding}
\gamma \,:\, V\otimes W~\longrightarrow~ W\otimes V~~,\quad v\otimes w~\longmapsto~\gamma(v\otimes w)\,:=\,(-1)^{\vert v\vert\,\vert w\vert}\, w\otimes v\quad,
\end{flalign}
for all homogeneous $v\in V$ and $w\in W$. The internal hom $\hom(V,W)\in\Ch$
between two cochain complexes $V,W\in\Ch$ is given by
\begin{subequations}\label{eqn:Chhom}
\begin{flalign}
\hom(V,W)^i \,:=\,\prod_{j\in\bbZ} \Hom(V^j,W^{j+i})\quad,
\end{flalign}
for all $i\in\bbZ$, where $\Hom$ denotes the vector space of linear maps,
and the differential
\begin{flalign}
\partial L\,:=\, \dd_W\,L - (-1)^{\vert L\vert}\,L\,\dd_V\quad,
\end{flalign}
\end{subequations}
for all homogeneous $L\in\hom(V,W)$. Note that cochain maps $f : V\to W$
are precisely the $0$-cocycles in $\hom(V,W)$, i.e.\ elements $f\in \hom(V,W)^0$ of degree $0$ 
satisfying $\partial f =0$. A \textit{cochain homotopy} between two cochain maps
$f,g:V\to W$ is a degree $-1$ element $h\in \hom(V,W)^{-1}$ such that $\partial h = g-f$.
The degree $i <-1$ elements of $\hom(V,W)$ admit an interpretation in terms of higher cochain homotopies.

\paragraph{Shifting conventions:} Associated to any integer $n\in\bbZ$
is the \textit{$n$-shift} endofunctor $[n]: \Ch\to \Ch$. To a cochain complex
$V\in\Ch$ it assigns the $n$-shifted cochain complex $V[n]\in\Ch$ given 
by $V[n]^i := V^{i+n}$ and $\dd^i_{V[n]}:= (-1)^{n}\,\dd^{i+n}_V$, for all $i\in\bbZ$. 
To a cochain map $f:V\to W$ it assigns the $n$-shifted cochain map $f[n]: V[n]\to W[n]$
given by $\{f[n]^i := f^{i+n} : V^{i+n}\to W^{i+n}\}_{i\in\bbZ}$.
Note that $[0]=\id_{\Ch}$ is the identity and $[n]\, [m] = [n+m]$ under composition. 
Recalling the tensor product \eqref{eqn:Chtensor},
one obtains a natural isomorphism $[n]\cong \bbK[n]\otimes (\,\cdot\,):\Ch\to \Ch$. To keep
track of shifts in element-wise expressions, we will denote elements
in $V[n]$ by $s^{-n}\, v\in V[n]$, where $s^{-n}\in \bbK[n]$ is the degree $-n$ element
determined by the unit of $\bbK$.
\sk

The interplay between shifts and the closed symmetric monoidal structure on $\Ch$ is as follows.
For any $n,m\in\bbZ$ and $V,W\in\Ch$, we have a natural cochain isomorphism
\begin{flalign}\label{eqn:shiftiso}
V[n]\otimes W[m]~\stackrel{\cong}{\longrightarrow}~(V\otimes W)[n+m]~~,\quad s^{-n}\,v\otimes s^{-m}w~\longmapsto~
(-1)^{\vert v\vert\, m}\, s^{-n-m}\,(v\otimes w)\quad,
\end{flalign}
for all homogeneous $v\in V$ and all $w\in W$, which moves the shifts to the left. 
Note that this cochain isomorphism
does not preserve the symmetric braiding on $\Ch$, but rather one has that
\begin{flalign}\label{eqn:shiftisobraiding}
\begin{gathered}
\xymatrix@C=6em{
\ar[d]_-{\cong}V[n]\otimes W[m] \ar[r]^-{\gamma}~&~W[m]\otimes V[n]\ar[d]^-{\cong}\\
(V\otimes W)[n+m]\ar[r]_-{(-1)^{n m}\,\gamma[n+m]}~&~(W\otimes V)[n+m]
}
\end{gathered}\qquad.
\end{flalign}
For the internal hom between $V,W\in\Ch$, we have a natural cochain isomorphism
\begin{flalign}\label{eqn:shiftisohom1}
\hom(V,W)~\stackrel{\cong}{\longrightarrow}~\hom\big(V[n],W[n]\big)~~,\quad
L~\longmapsto~L[n]\,:=\,\id_{\bbK[n]}\otimes L\quad.
\end{flalign}
The tensor product in this expression denotes the one of internal homs,
which when evaluated on elements $s^{-n}\,v\in V[n]$ yields the Koszul sign
$L[n](s^{-n}\,v) := (-1)^{\vert L\vert\,n} \,s^{-n}\,L(v)$.
Furthermore, we have a natural cochain isomorphism
\begin{flalign}
\hom(V,W)[n]~\stackrel{\cong}{\longrightarrow}~\hom\big(V,W[n]\big)
\end{flalign}
that comes without Koszul signs and is given by regarding
$s^{-n}\,L\in \hom(V,W)[n]$ as the element $\widetilde{L}\in \hom\big(V,W[n]\big)$
which is defined by the evaluation $\widetilde{L}(v):= s^{-n}L(v)\in W[n]$, for all $v\in V$.

\paragraph{Commutative differential graded algebras:} A \textit{differential graded algebra} (in short, DGA)
is a monoid object in the symmetric monoidal category $\Ch$. More explicitly, a DGA
is a triple $A= (A,\mu,\eta)$ consisting of a cochain complex $A=(A,\dd)\in\Ch$
and two cochain maps $\mu :A\otimes A\to A\,,~a\otimes a^\prime \mapsto a\,a^\prime$ 
(called multiplication) and $\eta : \bbK\to A\,,~1\mapsto \oone$ (called unit) 
which satisfy the usual associativity and unitality
conditions. A \textit{commutative differential graded algebra} (in short, CDGA)
is a DGA whose multiplication is commutative with respect
to the symmetric braiding \eqref{eqn:Chbraiding} on $\Ch$, i.e.\ $\mu = \mu\,\gamma$
or when evaluated on homogeneous elements $a\,a^\prime = (-1)^{\vert a\vert \,\vert a^\prime\vert}\,a^\prime\,a$.
The class of examples of CDGAs which is relevant for our work is given by
free CDGAs, which are also known as \textit{symmetric algebras}, and their semi-free deformations.
Recall that the symmetric algebra associated with a cochain complex $V\in\Ch$ is given by
the cochain complex
\begin{flalign}
\Sym\, V\,:=\,\bigoplus_{n\geq 0} \Sym^n \,V\,:=\, \bigoplus_{n\geq 0} \big(V^{\otimes n}\big)_{S_n}\,\in\,\Ch\quad,
\end{flalign}
where $V^{\otimes n} := V\otimes V\otimes \cdots\otimes V\in\Ch$ denotes the $n$-fold
tensor power and $\big(V^{\otimes n}\big)_{S_n}\in\Ch$ are the coinvariants (i.e.\ quotient) of the permutation
group action defined by the symmetric braiding \eqref{eqn:Chbraiding}.
The multiplication is defined by $\mu \big( 
[v_{1}\otimes\cdots\otimes v_n]\otimes[v^\prime_1\otimes\cdots\otimes v^\prime_m]\big) = 
[v_{1}\otimes \cdots \otimes v_n\otimes v^\prime_1\otimes \cdots\otimes v^\prime_m]$
and the unit is given by $\eta(1) = 1\in\bbK = \Sym^0 \,V \subseteq \Sym\,V$.
To ease notation, we shall denote elements of the symmetric algebra simply by $v_1\,v_2\cdots v_n\in \Sym\, V$.
\sk

The cochain complex of \textit{derivations} $\bbT_A\in\Ch$ of a CDGA $A=(A,\mu,\eta)$ 
is defined as the subcomplex
\begin{subequations}\label{eqn:DerA}
\begin{flalign}
\bbT_A\,\subseteq\, \hom(A,A)\,\in\,\Ch
\end{flalign}
whose homogeneous elements $D\in\bbT_A$ satisfy the Leibniz rule, i.e.\
\begin{flalign}
D(a\,a^\prime)\,=\, (Da)\,a^\prime + (-1)^{\vert D\vert\, \vert a\vert}\,a\,(Da^\prime)\quad,
\end{flalign}
\end{subequations}
for all homogeneous $a,a^\prime\in A$. The cochain complex $\bbT_A$ is canonically an $A$-dg-module
with left action $A\otimes \bbT_A\to \bbT_A\,,~a\otimes D\mapsto a\, D$ defined
by the evaluation $(a\, D)(a^\prime):= a\,D(a^\prime)$, for all $a^\prime \in A$. 
It further carries the structure of a Lie algebra object in $\Ch$, i.e.\ it is a dg-Lie algebra (or DGLA),
with Lie bracket given by the commutator 
\begin{flalign}\label{eqn:Lie}
[\,\cdot\,,\,\cdot\,]\,:\,\bbT_A\otimes \bbT_A~\longrightarrow~\bbT_A~~,\quad
D\otimes D^\prime \,\longmapsto\,[D,D^\prime]\,:=\, D\,D^\prime - (-1)^{\vert D\vert\,\vert D^\prime\vert}\,D^\prime\,D\quad,
\end{flalign}
for all homogeneous $D,D^\prime\in\bbT_A$. For later reference, let us spell out in 
detail the properties which the Lie bracket satisfies:
\begin{subequations}\label{eqn:Lieproperties}
\begin{itemize}
\item[(i)] Antisymmetry: For all homogeneous $D,D^\prime\in \bbT_A$,
\begin{flalign}\label{eqn:Lieantisym}
[D,D^\prime] \,=\,-(-1)^{\vert D\vert\,\vert D^\prime\vert}\,[D^\prime,D]\quad.
\end{flalign}

\item[(ii)] Derivation property: For all homogeneous $a\in A$ and $D,D^\prime \in\bbT_A$,
\begin{flalign}\label{eqn:Liederivation}
[D,a\,D^\prime]\,=\,D(a)\,D^\prime + (-1)^{\vert D\vert\,\vert a\vert}\, a\,[D,D^\prime]\quad.
\end{flalign}

\item[(iii)] Jacobi identity: For all homogeneous $D,D^\prime,D^{\prime\prime}\in\bbT_A$,
\begin{flalign}\label{eqn:LieJacobi}
\big[D,[D^\prime,D^{\prime\prime}]\big]\,=\, \big[[D,D^\prime],D^{\prime\prime}\big] + (-1)^{\vert D\vert\,\vert D^\prime\vert }\,\big[D^\prime,[D,D^{\prime\prime}]\big]\quad.
\end{flalign}
\end{itemize}
\end{subequations}
In the case of a free CDGA $A = \Sym\,V$, each derivation is completely determined by its restriction
to the generators $V\subseteq \Sym\,V$, i.e.\ we have a cochain isomorphism
\begin{flalign}\label{eqn:derfree}
\bbT_{\Sym\,V}\,\stackrel{\cong}{\longrightarrow}\,\hom\big(V,\Sym\,V\big)\quad.
\end{flalign}
The inverse is given explicitly by extending each homogeneous $L\in \hom(V,\Sym\,V)$ to a derivation
$D_L$ on $\Sym\,V$ via the Leibniz rule $D_L(v_1\cdots v_n):= \sum_{i=1}^n 
(-1)^{\vert L\vert\,\sum_{j=1}^{i-1}\vert v_j\vert}\,v_1\cdots v_{i-1}\,L(v_i)\,v_{i+1}\cdots v_n$, 
for all homogeneous $v_1,\dots,v_n\in V$.
\sk

We conclude this section by recalling a standard deformation construction for CDGAs.
A degree $1$ derivation $\alpha\in \bbT_A^1$ on a CDGA $A=(A,\mu,\eta)$
is called a \textit{Maurer-Cartan element} (in the DGLA $\bbT_A$) if it satisfies
the Maurer-Cartan equation
\begin{flalign}\label{eqn:semifreeMC}
\partial \alpha + \tfrac{1}{2}\,[\alpha,\alpha] \,=\,0\quad,
\end{flalign}
where $\partial$ is the differential \eqref{eqn:Chhom} on $\bbT_A\subseteq \hom(A,A)$
and $[\,\cdot\,,\,\cdot\,]$ is the Lie bracket \eqref{eqn:Lie}.
Given any Maurer-Cartan element $\alpha\in \bbT_A^1$, one may deform
the differential $\dd\in\bbT_A^1$ on $A$ to a new differential
\begin{flalign}\label{eqn:deformeddifferential}
\dd_\alpha \,:=\,\dd + \alpha\,\in\,\bbT_A^1 
\end{flalign}
which, as a consequence of the Maurer-Cartan equation, squares to zero $\dd_\alpha\,\dd_{\alpha}=0$.
One checks that endowing the resulting cochain complex $A_\alpha := (A,\dd_\alpha)\in\Ch$
with the given multiplication and unit defines a new CDGA $A_\alpha = (A_\alpha,\mu,\eta)$.
In the case of free CDGAs $A = \Sym\,V$, one calls
the result $(\Sym\,V)_\alpha$ of such deformations along Maurer-Cartan elements \textit{semi-free CDGAs}.

%%%%%%%%%%%%%%%%%%%%%%%%%%%%%%%%%%%%%%%%%%%%%%%%
%%%%%%%%%%%%%%%%%%%%%%%%%%%%%%%%%%%%%%%%%%%%%%%%

\section{\label{sec:polyvectors}Shifted polyvectors and shifted Poisson structures}
In this section we first recall the definitions of shifted polyvectors and of shifted 
Poisson structures on a CDGA. These concepts have their origin in derived algebraic geometry
where they provide interesting and fruitful generalizations of the usual polyvectors and Poisson 
structures on algebraic varieties. We refer the reader to \cite{PTVV,CPTVV} and \cite{Pridham,PridhamOutline} 
for the relevant context. Our presentation has intentionally a strong focus on computational details,
such as the explicit Koszul signs arising from shifts of cochain complexes.
This will be important later when we compute and interpret 
shifted Poisson structures on the Chevalley-Eilenberg algebra $\CE(\g)$ of a higher Lie algebra $\g$.
In the case of free and semi-free CDGAs, we provide a convenient graphical calculus
which simplifies the analysis of the individual components of a shifted Poisson structure
and their algebraic properties.

\paragraph{Basic definitions:}
Let us fix an arbitrary integer $n\in\bbZ$, called the shift of polyvectors.
\begin{defi}\label{def:shiftedpoly}
The CDGA of \textit{$n$-shifted polyvectors} on a CDGA $A$ is defined as the relative symmetric algebra
\begin{flalign}
\Pol(A,n)\,:=\, \Sym_A^{}\big(\bbT_A[-n-1]\big)\,=\, \bigoplus_{m\geq 0}\Sym^m_A\big(\bbT_A[-n-1]\big)\,=\,
\bigoplus_{m\geq 0}\big(\bbT_A[-n-1]^{\otimes_A m}\big)_{S_m}
\end{flalign}
on the $(-n-1)$-shift of the $A$-dg-module of derivations $\bbT_A$. 
\end{defi}

The non-negative integer $m$ in this direct sum decomposition is called 
the \textit{weight} of polyvectors. Note that weight and cohomological degree
are two different gradings on $\Pol(A,n)$: An $n$-shifted polyvector of weight $m$ and degree $i$
is an element in $\Sym_A^m\big(\bbT_A[-n-1]\big)^i$.
\sk

The CDGA of $n$-shifted polyvectors $\Pol(A,n)$ inherits from the Lie bracket \eqref{eqn:Lie} 
on the complex of derivations $\bbT_A$ a canonical shifted Poisson bracket
(called Schouten–Nijenhuis bracket), which endows $\Pol(A,n)$ 
with the structure of a $\mathbb{P}_{n+2}$-algebra. We describe this bracket
in terms of an ordinary (i.e.\ degree $0$) Lie bracket
\begin{flalign}\label{eqn:Schoutenbracket}
\pb{\,\cdot\,}{\,\cdot\,} \,:\, \Pol(A,n)[n+1]\otimes\Pol(A,n)[n+1]~\longrightarrow~\Pol(A,n)[n+1]
\end{flalign}
on the $(n+1)$-shift $\Pol(A,n)[n+1]\in\Ch$ of the cochain complex of $n$-shifted polyvectors.
This is defined on weight $\leq 1$ homogeneous $n$-shifted polyvectors 
$a,a^\prime\in \Sym_A^{0}\big(\bbT_A[-n-1]\big)= A$
and $s^{n+1}D,s^{n+1}D^\prime\in \Sym_A^{1}\big(\bbT_A[-n-1]\big) = \bbT_A[-n-1]$ by
\begin{subequations}\label{eqn:Schoutenweight<1}
\begin{flalign}
\pb{s^{-n-1}a}{s^{-n-1}a^\prime}\,&:=\,0 \quad, \\
\pb{s^{-n-1}s^{n+1}D}{s^{-n-1}a}\,&:=\,(-1)^{\vert D\vert\, (n+1)}\,s^{-n-1}D(a) \quad, \\
\pb{s^{-n-1}s^{n+1}D}{s^{-n-1}s^{n+1}D^\prime}\,&:=\,s^{-n-1}s^{n+1}\,[D,D^\prime]\quad,
\end{flalign}
\end{subequations}
where $D(a)$ denotes the evaluation of derivations and $[D,D^\prime]$ is the Lie bracket \eqref{eqn:Lie} on $\bbT_A$, 
and it is then extended as a biderivation to higher weights. The Schouten–Nijenhuis bracket \eqref{eqn:Schoutenbracket}
satisfies similar, but shifted, algebraic 
properties as the Lie bracket on $\bbT_A$ (see \eqref{eqn:Lieproperties}), which explicitly read as follows:
\begin{subequations}\label{eqn:Schoutenproperties}
\begin{itemize}
\item[(i)] Antisymmetry: For all homogeneous $P,Q\in \Pol(A,n)$,
\begin{flalign}\label{eqn:Schoutenantisym}
\pb{s^{-n-1}P}{s^{-n-1}Q} \,=\,-(-1)^{(\vert P\vert-n-1)\,(\vert Q\vert-n-1)}\,\pb{s^{-n-1}Q}{s^{-n-1}P}\quad.
\end{flalign}

\item[(ii)] Derivation property: For all homogeneous $P,Q,R\in \Pol(A,n)$,
\begin{flalign}\label{eqn:Schoutenderivation}
\pb{s^{-n-1}P}{s^{-n-1}Q\,R}\,=\, \pb{s^{-n-1}P}{s^{-n-1}Q}\,R + 
(-1)^{\vert P\vert\,\vert Q\vert}~ Q\,\pb{s^{-n-1}P}{s^{-n-1}R}\quad.
\end{flalign}

\item[(iii)] Jacobi identity: For all homogeneous $P,Q,R\in \Pol(A,n)$,
\begin{multline}\label{eqn:SchoutenJacobi}
\pbbig{s^{-n-1}P}{\pb{s^{-n-1}Q}{s^{-n-1}R}}\,=\,
\pbbig{\pb{s^{-n-1}P}{s^{-n-1}Q}}{s^{-n-1}R}\\[4pt]
+(-1)^{(\vert P\vert -n-1)\,(\vert Q\vert -n-1)}~\pbbig{s^{-n-1}Q}{\pb{s^{-n-1}P}{s^{-n-1}R}}\quad.
\end{multline}
\end{itemize}
\end{subequations}
Note that the Schouten–Nijenhuis bracket $\pb{\,\cdot\,}{\,\cdot\,}$ decreases the weight by $1$, i.e.\ 
for $P\in \Pol(A,n)$ of weight $m_P$ and $Q\in \Pol(A,n)$ of weight $m_Q$,
the weight of $\pb{s^{-n-1}P}{s^{-n-1}Q} $ is $m_P + m_Q -1$.
\sk

The definition of $n$-shifted Poisson structures on a CDGA $A$
uses a completion of the $\mathbb{P}_{n+2}$-algebra of $n$-shifted polyvectors $\Pol(A,n)$ 
in order to avoid bounds on the weights, see e.g.\ \cite{Pridham,PridhamOutline}. 
This completion is obtained by replacing the direct sum
of cochain complexes in Definition \ref{def:shiftedpoly} by a product.
\begin{defi}\label{def:shiftedpolycompleted}
The \textit{completed $n$-shifted polyvectors} on a CDGA $A$ are defined as
\begin{flalign}
\widehat{\Pol}(A,n)\,:=\, \prod_{m\geq 0}\Sym^m_A\big(\bbT_A[-n-1]\big)\quad.
\end{flalign}
A completed $n$-shifted polyvector is thus a formal sum 
$P = \sum_{m\geq 0}P^{(m)}\in \widehat{\Pol}(A,n)$ 
of homogeneous weight components $P^{(m)}\in \Sym^m_A\big(\bbT_A[-n-1]\big)$.
The $\mathbb{P}_{n+2}$-algebra structure on $\Pol(A,n)$ extends 
to the completion $\widehat{\Pol}(A,n)$ by setting
\begin{subequations}\label{eqn:hatPolstructure}
\begin{flalign}
P\,Q\,&:=\,\sum_{m\geq 0} \bigg(\sum_{k+l=m}P^{(k)}\,Q^{(l)}\bigg)\quad,\\
\pb{s^{-n-1}P}{s^{-n-1}Q}\,&:=\,
\sum_{m\geq 0} \bigg(\sum_{k+l-1=m}\pb{s^{-n-1}P^{(k)}}{s^{-n-1}Q^{(l)}}\bigg)\quad,
\end{flalign}
\end{subequations}
for all $P,Q\in \widehat{\Pol}(A,n)$. Note that these are well-defined because 
the weight is bounded from below by $0$ and hence the sums in the parentheses are finite.
\end{defi}

We now define the concept of an $n$-shifted Poisson structure following
\cite[Definition 1.5]{Pridham}, see also \cite[Definition 2.5]{PridhamOutline}.
\begin{defi}\label{def:shiftedPoisson}
An \textit{$n$-shifted Poisson structure} on a CDGA $A$ is a completed $n$-shifted polyvector
\begin{subequations}\label{eqn:MCshiftedPoisson}
\begin{flalign}
\pi\,=\sum_{m\geq 2} \pi^{(m)}\,\in\,\widehat{\Pol}(A,n)^{n+2}
\end{flalign}
of degree $n+2$ and weight $\geq 2$ which satisfies the Maurer-Cartan equation
\begin{flalign}
\partial\big(s^{-n-1}\pi\big) + \tfrac{1}{2}\,\pbbig{s^{-n-1}\pi}{s^{-n-1}\pi}\,=\,0
\end{flalign}
\end{subequations}
with respect to the Schouten–Nijenhuis bracket.
\end{defi}
\begin{rem}\label{rem:tower}
Decomposing $\pi\,=\sum_{m\geq 2} \pi^{(m)}$ into its weight components and using \eqref{eqn:hatPolstructure},
one observes that the Maurer-Cartan equation \eqref{eqn:MCshiftedPoisson} is equivalent
to the following tower of conditions
\begin{flalign}
\nn \partial\big(s^{-n-1}\pi^{(2)}\big)\,&=\,0\quad,\\
\nn \partial\big(s^{-n-1}\pi^{(3)}\big)+ \tfrac{1}{2}\,\pbbig{s^{-n-1}\pi^{(2)}}{s^{-n-1}\pi^{(2)}}\,&=\,0\quad,\\
\nn &\vdots\\
\partial\big(s^{-n-1}\pi^{(m)}\big)+ \tfrac{1}{2}\,\sum_{k+l-1=m}\pbbig{s^{-n-1}\pi^{(k)}}{s^{-n-1}\pi^{(l)}}\,&=\,0\quad,
\end{flalign}
for all $m\geq 3$. The first condition states that the shifted bivector $\pi^{(2)}$
is closed with respect to the differential $\partial$ which is induced by the one of the CDGA $A$. Note that this
condition is automatically satisfied in the case of an ordinary (i.e.\ non-dg) commutative algebra,
so one usually does not encounter it when studying Poisson structures on manifolds or algebraic varieties.
The second condition is a shifted generalization and homotopical relaxation of the usual condition that the 
Schouten–Nijenhuis bracket of the bivector $\pi^{(2)}$ with itself vanishes.
It implies in particular that the binary shifted Poisson bracket on $A$ associated with $\pi^{(2)}$ 
satisfies the Jacobi identity only up to a homotopy determined by the shifted trivector $\pi^{(3)}$. 
The higher weight components $\pi^{(m)}$, for $m\geq 4$, provide a coherent tower of higher homotopies
for the Jacobi identity.
\end{rem}

\paragraph{Finitely generated free CDGAs:} The aim of this paragraph is to specialize the concepts of 
shifted polyvectors and shifted Poisson structures to the case where $A$ is a finitely generated free CDGA. 
To match our present notation and conventions with the ones we use when studying 
higher Lie algebras in Section \ref{sec:higherLie} below, we consider a free CDGA
of the form
\begin{flalign}\label{eqn:freeCDGAg}
A\,=\,\Sym\big(\g^{\ast}[-1]\big)\quad,
\end{flalign}
where $\g^\ast:= \hom(\g,\bbK)\in\Ch$ is the dual of a bounded and degree-wise 
finite-dimensional cochain complex $\g\in\Ch$. We denote the duality pairing
by $\du{\,\cdot\,}{\,\cdot\,} : \g^\ast\otimes \g \to \bbK\,,~\theta \otimes x\mapsto\du{\theta}{x}$.
Later in Section \ref{sec:higherLie}, the cochain complex $\g$ will be equipped with the 
structure of an $L_\infty$-algebra.
\sk

Recalling \eqref{eqn:derfree}, we obtain that the dg-module
of derivations on the CDGA \eqref{eqn:freeCDGAg} is given by
\begin{flalign}\label{eqn:derfreeg}
\bbT_{\Sym(\g^{\ast}[-1])}\,\cong\, \hom\Big(\g^\ast[-1],\Sym\big(\g^{\ast}[-1]\big)\Big)\,\cong\,
\Sym\big(\g^{\ast}[-1]\big)\otimes \g[1]\quad.
\end{flalign}
The evaluation of derivations on the CDGA $\Sym\big(\g^{\ast}[-1]\big)$ is determined 
by shifting and permuting the duality pairing according to
\begin{subequations}\label{eqn:shiftdualitypairing}
\begin{flalign}
\xymatrix{
\g[1]\otimes \g^\ast[-1]\ar[r]^-{\cong}~&~(\g\otimes\g^\ast)[1-1]=\g\otimes\g^\ast \ar[r]^-{\gamma}
~&~\g^\ast\otimes\g \ar[r]^-{{\du{\,\cdot\,}{\,\cdot\,}}}~&~\bbK
}\quad,
\end{flalign}
where the first cochain isomorphism is given in \eqref{eqn:shiftiso} and it moves the shifts to the left.
With a slight abuse of notation, we denote the resulting pairing by the same symbol 
$\du{\,\cdot\,}{\,\cdot\,}$ as the duality pairing. For homogeneous
$s^{-1}\,x\in\g[1]$ and $s\,\theta\in\g^\ast[-1]$, this pairing reads explicitly as
\begin{flalign}
\du{s^{-1}x}{s\,\theta}\,:=\, (-1)^{\vert x\vert}\,(-1)^{\vert x\vert\,\vert\theta\vert} 
\, s^{-1}s\,\du{\theta}{x}\,=\,\du{\theta}{x}\quad, 
\end{flalign}
\end{subequations}
where in the last step we used that $\vert x\vert = -\vert \theta\vert$ whenever $\du{\theta}{x}\neq 0$.
\sk

Using \eqref{eqn:derfreeg}, it follows that the $n$-shifted polyvectors from Definition \ref{def:shiftedpoly}
on the free CDGA $\Sym\big(\g^{\ast}[-1]\big)$ are given by
\begin{subequations}\label{eqn:shiftedpolyg}
\begin{flalign}
\Pol\big(\Sym\big(\g^{\ast}[-1]\big),n\big)\,&\cong\,\Sym\big(\g[-n]\big)\otimes \Sym\big(\g^{\ast}[-1]\big)
\,\cong\, \Sym\Big(\g[-n]\oplus \g^{\ast}[-1]\Big)\quad.
\end{flalign}
Observe that, in addition to the cohomological degree of the underlying cochain complexes, 
there are two additional gradings 
\begin{flalign}
\Pol\big(\Sym\big(\g^{\ast}[-1]\big),n\big)\,=\,
\bigoplus_{m,l\geq 0}\Pol^{m,l}\big(\Sym\big(\g^{\ast}[-1]\big),n\big)\,\cong\,
\bigoplus_{m,l\geq 0}\Big(\Sym^m\big(\g[-n]\big)\otimes \Sym^l\big(\g^{\ast}[-1]\big)\Big)
\end{flalign}
\end{subequations}
given by the weight $m$ of shifted polyvectors
and the symmetric power $l$ in the underlying free algebra $\Sym\big(\g^{\ast}[-1]\big)$.
The completion from Definition \ref{def:shiftedpolycompleted} is then given by
\begin{flalign}\label{eqn:shiftedpolygcomplete}
\widehat{\Pol}\big(\Sym\big(\g^{\ast}[-1]\big),n\big)\,=\,
\prod_{m\geq 0}\bigoplus_{l\geq 0}\Pol^{m,l}\big(\Sym\big(\g^{\ast}[-1]\big),n\big)\quad,
\end{flalign}
i.e.\ only the weight gets completed. As a side remark,
let us note that completing also the symmetric power $l$
in \eqref{eqn:shiftedpolygcomplete} defines the CDGA of 
completed $n$-shifted polyvectors on the completed symmetric algebra $\widehat{\Sym}\big(\g^{\ast}[-1]\big)
:=\prod_{l\geq 0} \Sym^l\big(\g^{\ast}[-1]\big)$.
\sk

In the present case of a free CDGA, 
the Schouten–Nijenhuis bracket \eqref{eqn:Schoutenbracket} admits a simplified description in terms of the 
commutator of a composition operation
\begin{flalign}\label{eqn:composition}
\bullet\,:\,\Pol\big(\Sym\big(\g^{\ast}[-1]\big),n\big)[n+1]\otimes 
\Pol\big(\Sym\big(\g^{\ast}[-1]\big),n\big)[n+1]~\longrightarrow~
\Pol\big(\Sym\big(\g^{\ast}[-1]\big),n\big)[n+1]
\end{flalign}
which we shall now describe. We define the cochain map $\bullet$ on the
homogeneous generators $s\,\theta,s\,\theta^\prime\in \g^\ast[-1]$ 
and $s^nx,s^nx^\prime\in\g[-n]$ of \eqref{eqn:shiftedpolyg} by
\begin{subequations}\label{eqn:compositiongenerators}
\begin{flalign}
(s^{-n-1}s\,\theta)\bullet(s^{-n-1}s\,\theta^\prime)\,&=\,0\quad,\\
(s^{-n-1}s^n x)\bullet (s^{-n-1}s\,\theta)\,&=\,0 \quad,\\
(s^{-n-1}s\,\theta)\bullet (s^{-n-1}s^n x)\,&=\,(-1)^{\vert \theta\vert}~s^{-n-1}\,\du{\theta}{x} \quad,\\
(s^{-n-1}s^n x)\bullet(s^{-n-1}s^n x^\prime)\,&=\,0\quad,
\end{flalign}
\end{subequations}
and extend to $\Pol\big(\Sym\big(\g^{\ast}[-1]\big),n\big)$ as a biderivation, i.e.\
by demanding the properties
\begin{subequations}\label{eqn:compositionder}
\begin{flalign}
\nn (s^{-n-1}P)\bullet(s^{-n-1}Q\,R) \,&=\, \big((s^{-n-1}P)\bullet(s^{-n-1}Q)\big)\,R\\
&\qquad\quad +(-1)^{\vert P\vert\,\vert Q\vert} \,Q \,\big((s^{-n-1}P)\bullet(s^{-n-1}R)\big)\label{eqn:bulletrightder}
\end{flalign}
and
\begin{flalign}
\nn (s^{-n-1}P\,Q)\bullet(s^{-n-1}R) \,&=\, (-1)^{\vert P\vert\,(n+1)}\,P\, \big((s^{-n-1}Q)\bullet(s^{-n-1}R)\big)\\
&\qquad\quad + (-1)^{\vert Q\vert\,(\vert R\vert -n-1)}\,\big((s^{-n-1}P)\bullet(s^{-n-1}R)\big)\,Q\quad,\label{eqn:bulletleftder}
\end{flalign}
\end{subequations}
for all homogeneous $P,Q,R\in \Pol\big(\Sym\big(\g^{\ast}[-1]\big),n\big)$.
\begin{propo}\label{prop:bulletcommutator}
The Schouten–Nijenhuis bracket \eqref{eqn:Schoutenbracket} on the $n$-shifted
polyvectors of a free CDGA $\Sym\big(\g^{\ast}[-1]\big)$ agrees with the commutator
of the composition operation \eqref{eqn:composition}, i.e.\
\begin{flalign}\label{eqn:Schouten=commutator}
\pb{s^{-n-1}P}{s^{-n-1}Q}\,=\, (s^{-n-1}P)\bullet (s^{-n-1}Q) - (-1)^{(\vert P\vert-n-1)\,(\vert Q\vert -n-1)}~(s^{-n-1}Q)\bullet (s^{-n-1}P)\quad,
\end{flalign}
for all homogeneous $P,Q\in \Pol\big(\Sym\big(\g^{\ast}[-1]\big),n\big)$.
\end{propo}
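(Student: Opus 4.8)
The plan is to reduce the identity \eqref{eqn:Schouten=commutator} to its values on a set of algebra generators. By \eqref{eqn:shiftedpolyg}, the $n$-shifted polyvectors $\Pol\big(\Sym(\g^\ast[-1]),n\big)\cong\Sym\big(\g[-n]\oplus\g^\ast[-1]\big)$ form the free graded-commutative algebra on the weight-$0$ generators $\g^\ast[-1]$ and the weight-$1$ generators $\g[-n]$. Both sides of \eqref{eqn:Schouten=commutator} are, for each fixed first argument, graded derivations of this commutative algebra in the second argument: for the left-hand side this is the derivation property \eqref{eqn:Schoutenderivation} of the Schouten–Nijenhuis bracket, and for the right-hand side it follows by combining the two $\bullet$-biderivation identities \eqref{eqn:bulletrightder} and \eqref{eqn:bulletleftder}, a short Koszul-sign computation showing that the graded commutator of $\bullet$ again obeys \eqref{eqn:Schoutenderivation} with the sign $(-1)^{\vert P\vert\,\vert Q\vert}$; moreover both sides are antisymmetric in the sense of \eqref{eqn:Schoutenantisym}, this being built into the right-hand side by construction. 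Since \eqref{eqn:Schoutenweight<1} exhibits the Schouten–Nijenhuis bracket as the unique such biderivation extending its values on weight-$\leq 1$ polyvectors, it suffices to verify \eqref{eqn:Schouten=commutator} when $P$ and $Q$ each run over the generators $s\,\theta\in\g^\ast[-1]$ and $s^n x\in\g[-n]$, the latter viewed inside $\bbT_A[-n-1]$ via \eqref{eqn:derfreeg} as the shifted constant derivation $s^{n+1}D_x$, where $D_x$ acts on $\g^\ast[-1]\subseteq A$ through the pairing \eqref{eqn:shiftdualitypairing}.

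By the shared antisymmetry there are three cases. First, for two generators $s\,\theta,s\,\theta^\prime\in\g^\ast[-1]$ both sides vanish: the left-hand side by the first line of \eqref{eqn:Schoutenweight<1} (these are weight-$0$ polyvectors, i.e.\ elements of $A$), the right-hand side by the first line of \eqref{eqn:compositiongenerators}. Second, for two generators $s^n x,s^n x^\prime\in\g[-n]$ the third line of \eqref{eqn:Schoutenweight<1} gives $\pb{s^{-n-1}s^n x}{s^{-n-1}s^n x^\prime}=s^{-n-1}s^{n+1}[D_x,D_{x^\prime}]$, and $[D_x,D_{x^\prime}]=0$ since the constant derivations $D_x,D_{x^\prime}$ annihilate $\bbK$ and hence compose to zero on the generators of $A=\Sym(\g^\ast[-1])$, so the commutator \eqref{eqn:Lie}, being by \eqref{eqn:derfree} a derivation determined by its restriction to generators, vanishes identically; the right-hand side vanishes by the last line of \eqref{eqn:compositiongenerators}. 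Third, for one generator of each type the second line of \eqref{eqn:Schoutenweight<1} yields $\pb{s^{-n-1}s^n x}{s^{-n-1}s\,\theta}=(-1)^{\vert D_x\vert\,(n+1)}\,s^{-n-1}\,\du{\theta}{x}$, while the second and third lines of \eqref{eqn:compositiongenerators}, together with the antisymmetry sign, reduce the right-hand side to $-(-1)^{(\vert s^n x\vert-n-1)(\vert s\,\theta\vert-n-1)}(-1)^{\vert\theta\vert}\,s^{-n-1}\,\du{\theta}{x}$; substituting $\vert D_x\vert=\vert x\vert-1$, $\vert s^n x\vert=\vert x\vert+n$, $\vert s\,\theta\vert=\vert\theta\vert+1$ and $\vert\theta\vert=-\vert x\vert$ (the last identity holding on the support of the pairing) shows that the two signs agree modulo $2$, so the two sides coincide.

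I expect the only genuine work to be the Koszul-sign bookkeeping, concentrated in two places. The first is pinning down the precise sign with which an element $s^n x\in\g[-n]$ corresponds under \eqref{eqn:derfreeg} to the shifted derivation $s^{n+1}D_x$, and how $D_x$ evaluates on $\g^\ast[-1]$ via \eqref{eqn:shiftdualitypairing}; this feeds directly into the third case above. The second, and the one where care is really needed, is the verification that the graded commutator of the $\bullet$-biderivation \eqref{eqn:compositionder} reproduces exactly the Schouten derivation rule \eqref{eqn:Schoutenderivation}: the asymmetric prefactors $(-1)^{\vert P\vert(n+1)}$ and $(-1)^{\vert Q\vert(\vert R\vert-n-1)}$ in \eqref{eqn:bulletleftder} are tailored precisely so that this works out, but checking it requires tracking all the shift-induced signs. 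Once these two sign checks are in place, the rest is a formal consequence of the universal property of the free graded-commutative algebra.
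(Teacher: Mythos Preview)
Your proposal is correct and follows essentially the same approach as the paper: both argue that the commutator of $\bullet$ is antisymmetric and satisfies the Schouten derivation property \eqref{eqn:Schoutenderivation}, then reduce to checking \eqref{eqn:Schouten=commutator} on the generators $s\,\theta\in\g^\ast[-1]$ and $s^n x\in\g[-n]$, treating the same three cases. The paper carries out explicitly the Koszul-sign computation showing that the commutator of $\bullet$ inherits the derivation rule (using \eqref{eqn:bulletleftder} and commutativity of the underlying CDGA), which you correctly flag as the main point requiring care; your sign comparison in the mixed-generator case also agrees with the paper's computation in \eqref{eqn:Schouteng}.
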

\begin{proof}
The commutator of $\bullet$ is manifestly antisymmetric, as is the Schouten–Nijenhuis bracket 
\eqref{eqn:Schoutenantisym}, and it also satisfies the derivation property \eqref{eqn:Schoutenderivation}.
The latter statement follows from \eqref{eqn:bulletrightder} and the following identity
\begin{flalign}
\nn (s^{-n-1}Q\,R)\bullet(s^{-n-1}P) \,&=\, (-1)^{\vert R\vert\,\vert Q\vert}~(s^{-n-1}R\,Q)\bullet(s^{-n-1}P)\\[4pt]
\nn \,&=\,(-1)^{\vert R\vert\,(\vert Q\vert -n-1)}\,R\,\big((s^{-n-1}Q)\bullet(s^{-n-1}P)\big)\\
\nn &\qquad\quad+ (-1)^{\vert Q\vert\,(\vert R\vert+\vert P\vert-n-1)}\,\big((s^{-n-1}R)\bullet(s^{-n-1}P)\big)\,Q\\[4pt]
\nn \,&=\,(-1)^{\vert R\vert\,(\vert P\vert -n-1)}\,\big((s^{-n-1}Q)\bullet(s^{-n-1}P)\big)\,R\\
 &\qquad\quad + (-1)^{\vert Q\vert\,(n+1)}\,Q\,\big((s^{-n-1}R)\bullet(s^{-n-1}P)\big)\quad,
\end{flalign}
where in the first and third step we used commutativity of the CDGA
$\Pol\big(\Sym\big(\g^{\ast}[-1]\big),n\big)$ and in the second step we used \eqref{eqn:bulletleftder}.
As a consequence of antisymmetry and the derivation property, it suffices to verify the 
identity \eqref{eqn:Schouten=commutator} on the generators
of \eqref{eqn:shiftedpolyg}. For the left-hand side given by the 
Schouten–Nijenhuis bracket, one computes using \eqref{eqn:Schoutenweight<1} and \eqref{eqn:shiftdualitypairing} that
\begin{subequations}\label{eqn:Schouteng}
\begin{flalign}
\pb{s^{-n-1}s\,\theta}{s^{-n-1}s\,\theta^\prime}\,&=\,0\,=\,\pb{s^{-n-1}s^n x}{s^{-n-1}s^n x^\prime}\quad,\\[4pt]
\nn \pb{s^{-n-1}s^n x}{s^{-n-1}s\,\theta}\,&=\,\pb{s^{-n-1}s^{n+1}s^{-1} x}{s^{-n-1}s\,\theta}\,=\,
(-1)^{(\vert x\vert-1)\,(n+1)}\,s^{-n-1}\,\du{s^{-1}x}{s\theta} \\
\, &= \, (-1)^{(\vert x\vert-1)\,(n+1)}\,s^{-n-1}\,\du{\theta}{x} \quad,
\end{flalign}
\end{subequations}
for all homogeneous $s\,\theta,s\,\theta^\prime\in \g^\ast[-1]$ 
and $s^nx,s^nx^\prime\in\g[-n]$. Comparing with \eqref{eqn:compositiongenerators}
one immediately observes that this coincides with the commutator of $\bullet$,
which implies that \eqref{eqn:Schouten=commutator} holds true for 
generators and hence for all shifted polyvectors.
\end{proof}

\begin{cor}\label{cor:Poissonviabullet}
A completed $n$-shifted polyvector 
$\pi = \sum_{m\geq 2}\pi^{(m)}\in\widehat{\Pol}\big(\Sym\big(\g^{\ast}[-1]\big)\big)^{n+2}$
of degree $n+2$ and weight $\geq 2$ 
is an $n$-shifted Poisson structure in the sense of Definition \ref{def:shiftedPoisson} if and only if
\begin{flalign}\label{eqn:MCbullet}
\partial\big(s^{-n-1}\pi\big)+ (s^{-n-1}\pi)\bullet(s^{-n-1}\pi)\,=\,0\quad.
\end{flalign}
\end{cor}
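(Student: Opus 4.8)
The plan is to deduce this immediately from Proposition \ref{prop:bulletcommutator} by evaluating the Schouten–Nijenhuis bracket on the diagonal $P=Q=\pi$. The first step is the degree bookkeeping: since $\pi$ is homogeneous of cohomological degree $n+2$, the shifted element $s^{-n-1}\pi$ sits in degree $(n+2)-(n+1)=1$ of the $(n+1)$-shifted complex $\Pol\big(\Sym(\g^{\ast}[-1]),n\big)[n+1]$, which is precisely the statement that the integer $\vert\pi\vert-n-1$ appearing in the Koszul signs of \eqref{eqn:Schoutenproperties} and \eqref{eqn:Schouten=commutator} equals $1$.

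Next I would apply the identity \eqref{eqn:Schouten=commutator} with $P=Q=\pi$. The Koszul exponent is $(\vert\pi\vert-n-1)(\vert\pi\vert-n-1)=1$, which is odd, so the two $\bullet$-terms on the right-hand side add rather than cancel:
\begin{flalign*}
\pb{s^{-n-1}\pi}{s^{-n-1}\pi}\,=\,(s^{-n-1}\pi)\bullet(s^{-n-1}\pi) - (-1)^{1}\,(s^{-n-1}\pi)\bullet(s^{-n-1}\pi)\,=\,2\,(s^{-n-1}\pi)\bullet(s^{-n-1}\pi)\quad.
\end{flalign*}
Dividing by $2$, which is legitimate since $\mathrm{char}\,\bbK=0$, shows that $\tfrac12\,\pb{s^{-n-1}\pi}{s^{-n-1}\pi}=(s^{-n-1}\pi)\bullet(s^{-n-1}\pi)$, so the Maurer-Cartan equation of Definition \ref{def:shiftedPoisson} is equivalent to \eqref{eqn:MCbullet}.

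The only point requiring mild care is that $\pi=\sum_{m\geq 2}\pi^{(m)}$ is a formal sum in the completion $\widehat{\Pol}$, whereas Proposition \ref{prop:bulletcommutator} is phrased for genuine (uncompleted) polyvectors. I would first observe that the composition operation $\bullet$ extends to $\widehat{\Pol}$ exactly as the Schouten bracket does in \eqref{eqn:hatPolstructure}: the defining formulas \eqref{eqn:compositiongenerators}–\eqref{eqn:compositionder} show that $\bullet$ lowers the weight by $1$, so the sums $\sum_{k+l-1=m}(s^{-n-1}\pi^{(k)})\bullet(s^{-n-1}\pi^{(l)})$ are finite and well-defined. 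Applying \eqref{eqn:Schouten=commutator} weight-component-wise—using that every $\pi^{(m)}$ is homogeneous of the same degree $n+2$, so $\vert\pi^{(m)}\vert-n-1=1$ for all $m$—and summing over $m$ then gives the displayed identity $\pb{s^{-n-1}\pi}{s^{-n-1}\pi}=2\,(s^{-n-1}\pi)\bullet(s^{-n-1}\pi)$ in $\widehat{\Pol}$, which completes the argument. I do not expect any genuine obstacle here: the entire content sits in Proposition \ref{prop:bulletcommutator} together with the degree count $\vert\pi\vert-n-1=1$, and the corollary is a formal consequence.
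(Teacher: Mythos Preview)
Your proof is correct and follows essentially the same route as the paper: apply Proposition~\ref{prop:bulletcommutator} with $P=Q=\pi$, observe that the Koszul exponent $(\vert\pi\vert-n-1)^2=1$ makes the two $\bullet$-terms add to give $\pb{s^{-n-1}\pi}{s^{-n-1}\pi}=2\,(s^{-n-1}\pi)\bullet(s^{-n-1}\pi)$, and conclude. Your additional remarks on extending $\bullet$ to the completion are a welcome elaboration but not a different approach.
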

\begin{proof}
From Proposition \ref{prop:bulletcommutator}, it follows that
\begin{flalign}
\nn\pb{s^{-n-1}\pi}{s^{-n-1}\pi}\,&=\, (s^{-n-1}\pi)\bullet (s^{-n-1}\pi) - (-1)^{(n+2-n-1)^2}~  (s^{-n-1}\pi)\bullet (s^{-n-1}\pi)\\
\,&=\, 2~(s^{-n-1}\pi)\bullet (s^{-n-1}\pi)\quad.
\end{flalign}
This implies that \eqref{eqn:MCbullet} is equivalent to the Maurer-Cartan equation \eqref{eqn:MCshiftedPoisson}.
\end{proof}

\paragraph{Shifted polyvectors as maps:} In Section \ref{sec:higherLie} below,
it will be convenient to identify $n$-shifted polyvectors and 
Poisson structures on the free CDGA $\Sym\big(\g^{\ast}[-1]\big)$ in terms of 
maps $\hom(\g^{\otimes l},\g^{\otimes m})$
between tensor powers of the \textit{unshifted} cochain complex $\g$. This identification involves
Koszul signs, e.g.\ those arising from the shifting isomorphisms \eqref{eqn:shiftiso},
which contribute to the explicit form of the transferred composition operation \eqref{eqn:composition}
and hence the transferred Schouten–Nijenhuis bracket.
The aim of this paragraph is to work out these identifications in detail.
\sk

First, let us observe that, as a consequence of \eqref{eqn:shiftiso} and \eqref{eqn:shiftisobraiding}, 
we have cochain isomorphisms
\begin{subequations}
\begin{flalign}
\Sym\big(\g^\ast[-1]\big)~&\stackrel{\cong}{\longrightarrow}~\bigoplus_{l\geq 0}\big(\text{${\Mwedge}^l$}\g^\ast\big)[-l]\quad,\\
\nn s\theta_1\,\cdots\,s\theta_l~&\longmapsto~(-1)^{\sum_{j=1}^l \vert\theta_j\vert \,(l-j)}~s^{l}\theta_1\cdots\theta_l
\end{flalign}
and
\begin{flalign}
\Sym\big(\g[-n]\big)~&\stackrel{\cong}{\longrightarrow}~\bigoplus_{m\geq 0}\big(\Sym^m_\pm\g\big)[-mn]\quad,\\
\nn s^n x_1\,\cdots\,s^n x_m~&\longmapsto~(-1)^{\sum_{k=1}^m \vert x_k\vert \,(m-k)n}~s^{mn}x_1\cdots x_m\quad,
\end{flalign}
where
\begin{flalign}
\Sym_\pm^{m}\, V\,:=\,\begin{cases}
\Sym^m \,V&~,~~\text{for $n$ even}\quad,\\
{\Mwedge}^m V&~,~~\text{for $n$ odd}\quad,\\
\end{cases}
\end{flalign}
\end{subequations}
denotes the symmetric/exterior powers of a cochain complex $V\in\Ch$.
Using \eqref{eqn:shiftiso} once more, we obtain for the $n$-shifted
polyvectors \eqref{eqn:shiftedpolyg} the cochain isomorphism
\begin{flalign}\label{eqn:polyvectorleftshift}
\Pol\big(\Sym\big(\g^{\ast}[-1]\big),n\big)~\stackrel{\cong}{\longrightarrow}&~
\bigoplus_{m,l\geq 0}\Big(\Sym_\pm^m\,\g \otimes \text{${\Mwedge}^l$}\g^\ast\Big)[-mn-l]\quad,\\
\nn s^n x_1\,\cdots\,s^n x_m\,s\theta_1\,\cdots\,s\theta_l ~\longmapsto&~
(-1)^{\sum_{k=1}^m \vert x_k\vert \,((m-k)n+l)}
~(-1)^{\sum_{j=1}^l \vert\theta_j\vert \,(l-j)}~s^{mn+l}x_1\cdots x_m\,\theta_1\cdots\theta_l\quad.
\end{flalign}
Since this isomorphism is defined weight-wise, it extends to the completed
$n$-shifted polyvectors \eqref{eqn:shiftedpolygcomplete} replacing $\bigoplus_{m\geq 0}$
by $\prod_{m\geq 0}$ on both sides.
\sk

Next, we define a cochain map
\begin{subequations}\label{eqn:polyvector2map}
\begin{flalign}
\Sym_\pm^m\,\g \otimes \text{${\Mwedge}^l$}\g^\ast~\longrightarrow~
\hom\big(\g^{\otimes l},\g^{\otimes m}\big)\quad,
\end{flalign}
for all $m,l\geq 0$, by making use of the duality pairing $\du{\,\cdot\,}{\,\cdot\,}:
\g^\ast\otimes\g\to \bbK$. To an element $B = x_1\cdots x_m\,\theta_1\cdots\theta_l 
\in \Sym_\pm^m\,\g \otimes \text{${\Mwedge}^l$}\g^\ast$, with all $x_k$ and $\theta_j$ homogeneous,
this cochain map assigns the element $L_B\in \hom\big(\g^{\otimes l},\g^{\otimes m}\big)$
which is defined by the evaluation formula
\begin{flalign}
L_B(y_1,\dots,y_l)\,&:=\,\sum_{\sigma\in S_l}\sum_{\rho\in S_m}
(-1)^{\vert x_1\cdots x_m\vert_{\pm}^\rho}~(-1)^{\vert \theta_1\cdots\theta_l\vert^\sigma_{-}}~
(-1)^{\vert y_1\cdots y_l\vert^\mathrm{rev}_+}~\bigotimes_{k=1}^mx_{\rho(k)}~\prod_{j=1}^l\du{\theta_{\sigma(j)}}{y_j}\quad,
\end{flalign}
for all homogeneous $y_1,\dots,y_l\in \g$,
where the Koszul signs are determined from permutations $\rho\in S_m$ in $\Sym_\pm^m\,V$, i.e.\
$v_1\cdots v_m = (-1)^{\vert v_1\cdots v_m\vert^{\rho}_{\pm}}\,v_{\rho(1)}\cdots v_{\rho(m)}$
for all homogeneous $v_1,\dots v_m\in V$ in a cochain complex $V\in\Ch$.
The superscript ${}^{\mathrm{rev}}$ refers to the reversal permutation $(1,2,\dots,l)\to (l,\dots,2,1)$.
The sum over all permutations $\sigma\in S_l,\rho\in S_m$, together with the associated Koszul signs,
encodes the (anti-)symmetry properties of $\Sym_\pm^m\,\g \otimes \text{${\Mwedge}^l$}\g^\ast$
at the level of maps $\hom\big(\g^{\otimes l},\g^{\otimes m}\big)$. Indeed, the cochain map
\eqref{eqn:polyvector2map} defines an isomorphism when corestricting 
\begin{flalign}
\Sym_\pm^m\,\g \otimes \text{${\Mwedge}^l$}\g^\ast~\stackrel{\cong}{\longrightarrow}~
\hom^-_{\pm}\big(\g^{\otimes l},\g^{\otimes m}\big)\,\subseteq\,\hom\big(\g^{\otimes l},\g^{\otimes m}\big)
\end{flalign}
\end{subequations}
to the subcomplex of all $L\in \hom\big(\g^{\otimes l},\g^{\otimes m}\big)$ whose input is totally antisymmetric
and whose output is totally symmetric for $n$ even or antisymmetric for $n$ odd.
The additional Koszul sign $(-1)^{\vert y_1\cdots y_l\vert^\mathrm{rev}_+}$ is a convenient convention
which simplifies some of the signs appearing below.
Combining the above identifications, we obtain a cochain isomorphism
\begin{flalign}\label{eqn:polyvectorhomiso}
\Pol\big(\Sym\big(\g^{\ast}[-1]\big),n\big)\,\cong\,\bigoplus_{m,l\geq 0}\hom^-_{\pm}\big(\g^{\otimes l},\g^{\otimes m}\big)[-mn-l]\quad,
\end{flalign}
which extends to the completed $n$-shifted polyvectors 
\eqref{eqn:shiftedpolygcomplete} replacing $\bigoplus_{m\geq 0}$
by $\prod_{m\geq 0}$ on both sides.

\paragraph{Graphical calculus:} It will be convenient 
to represent elements $L\in \hom^-_{\pm}\big(\g^{\otimes l},\g^{\otimes m}\big)$
graphically by diagrams of the form
\begin{flalign}
\parbox{1.5cm}{\begin{tikzpicture}
%input lines
\draw[thick] (-0.25,0.75) -- (-0.25,0);
\draw[thick] (0,0.75) -- (0,0);
\draw[thick] (0.25,0.75) -- (0.25,0);
%output lines
\draw[thick] (-0.375,0) -- (-0.375,-0.75);
\draw[thick] (-0.125,0) -- (-0.125,-0.75);
\draw[thick] (0.125,0) -- (0.125,-0.75);
\draw[thick] (0.375,0) -- (0.375,-0.75);
%squares
\draw[thick,fill=white] (-0.6,-0.25) rectangle (0.6,0.25) node[pos=.5] {$L$};
%braces
\draw[thick, decorate,decoration={brace,amplitude=5pt}]
  (-0.35,0.85) -- (0.35,0.85) node[midway,yshift=1em]{\text{\footnotesize{$l$ inputs}}};
\draw[thick, decorate,decoration={brace,amplitude=5pt,mirror}]
  (-0.475,-0.85) -- (0.475,-0.85) node[midway,yshift=-1em]{\text{\footnotesize{$m$ outputs}}};
\end{tikzpicture}}~\in~ \hom^-_{\pm}\big(\g^{\otimes l},\g^{\otimes m}\big)
\end{flalign}
which should be read from top to bottom. Total (anti-)symmetry of the inputs and outputs 
under the symmetric braiding $\gamma$ on $\Ch$ is graphically visualized by e.g.\
\begin{subequations}\label{eqn:input/outputpermutations}
\begin{flalign}
(-1)^n~
\parbox{1.25cm}{\begin{tikzpicture}
%input lines
\draw[thick] (-0.25,0.75) -- (-0.25,0);
\draw[thick] (0,0.75) -- (0,0);
\draw[thick] (0.25,0.75) -- (0.25,0);
%output lines
\draw[thick] (-0.375,0) -- (-0.375,-0.75);
\draw[thick] (-0.125,0) -- (-0.125,-0.5) -- (0.125,-0.75);
\draw[thick] (0.125,0) -- (0.125,-0.5) -- (-0.125,-0.75);
\draw[thick] (0.375,0) -- (0.375,-0.75);
%squares
\draw[thick,fill=white] (-0.6,-0.25) rectangle (0.6,0.25) node[pos=.5] {$L$};
\end{tikzpicture}}
~=~
\parbox{1.25cm}{\begin{tikzpicture}
%input lines
\draw[thick] (-0.25,0.75) -- (-0.25,0);
\draw[thick] (0,0.75) -- (0,0);
\draw[thick] (0.25,0.75) -- (0.25,0);
%output lines
\draw[thick] (-0.375,0) -- (-0.375,-0.75);
\draw[thick] (-0.125,0) -- (-0.125,-0.75);
\draw[thick] (0.125,0) -- (0.125,-0.75);
\draw[thick] (0.375,0) -- (0.375,-0.75);
%squares
\draw[thick,fill=white] (-0.6,-0.25) rectangle (0.6,0.25) node[pos=.5] {$L$};
\end{tikzpicture}}
~=~(-1)~
\parbox{1.25cm}{\begin{tikzpicture}
%input lines
\draw[thick] (-0.25,0.75) -- (-0.25,0);
\draw[thick] (0,0.75) -- (0.25,0.5) -- (0.25,0);
\draw[thick] (0.25,0.75) -- (0,0.5) -- (0,0);
%output lines
\draw[thick] (-0.375,0) -- (-0.375,-0.75);
\draw[thick] (-0.125,0) -- (-0.125,-0.75);
\draw[thick] (0.125,0) -- (0.125,-0.75);
\draw[thick] (0.375,0) -- (0.375,-0.75);
%squares
\draw[thick,fill=white] (-0.6,-0.25) rectangle (0.6,0.25) node[pos=.5] {$L$};
\end{tikzpicture}}\quad.
\end{flalign}
More generally, given any permutations $\sigma\in S_l$ and $\rho\in S_m$, we have that
\begin{flalign}
(-1)^{\vert \rho\vert \,n}~~
\parbox{1.25cm}{\begin{tikzpicture}
%input lines
\draw[thick] (-0.25,0.75) -- (-0.25,0);
\draw[thick] (0,0.75) -- (0,0);
\draw[thick] (0.25,0.75) -- (0.25,0);
%output lines
\draw[thick] (-0.375,0) -- (-0.375,-1.75);
\draw[thick] (-0.125,0) -- (-0.125,-1.75);
\draw[thick] (0.125,0) -- (0.125,-1.75);
\draw[thick] (0.375,0) -- (0.375,-1.75);
%squares
\draw[thick,fill=white] (-0.6,-0.25) rectangle (0.6,0.25) node[pos=.5] {$L$};
\draw[thick,fill=white,dotted] (-0.6,-1.25) rectangle (0.6,-0.75) node[pos=.5] {$\gamma_\rho$};
\end{tikzpicture}}
~=~
\parbox{1.25cm}{\begin{tikzpicture}
%input lines
\draw[thick] (-0.25,0.75) -- (-0.25,0);
\draw[thick] (0,0.75) -- (0,0);
\draw[thick] (0.25,0.75) -- (0.25,0);
%output lines
\draw[thick] (-0.375,0) -- (-0.375,-0.75);
\draw[thick] (-0.125,0) -- (-0.125,-0.75);
\draw[thick] (0.125,0) -- (0.125,-0.75);
\draw[thick] (0.375,0) -- (0.375,-0.75);
%squares
\draw[thick,fill=white] (-0.6,-0.25) rectangle (0.6,0.25) node[pos=.5] {$L$};
\end{tikzpicture}}
~=~(-1)^{\vert\sigma\vert}~~
\parbox{1.25cm}{\begin{tikzpicture}
%input lines
\draw[thick] (-0.25,1.75) -- (-0.25,0);
\draw[thick] (0,1.75) -- (0,0);
\draw[thick] (0.25,1.75) -- (0.25,0);
%output lines
\draw[thick] (-0.375,0) -- (-0.375,-0.75);
\draw[thick] (-0.125,0) -- (-0.125,-0.75);
\draw[thick] (0.125,0) -- (0.125,-0.75);
\draw[thick] (0.375,0) -- (0.375,-0.75);
%squares
\draw[thick,fill=white] (-0.6,-0.25) rectangle (0.6,0.25) node[pos=.5] {$L$};
\draw[thick,fill=white,dotted] (-0.6,0.75) rectangle (0.6,1.25) node[pos=.5] {$\gamma_\sigma$};
\end{tikzpicture}}\quad,
\end{flalign}
\end{subequations}
where the dotted boxes represent the action of permutations
via the symmetric braiding $\gamma$ and $\vert\sigma\vert,\vert \rho\vert\in\bbZ_2$ denotes
the signatures of the permutations.
\begin{propo}\label{propo:compositiongraphicalcalculus}
The transfer $\tilde{\bullet}$ of the composition operation $\bullet$ from \eqref{eqn:composition}
along the cochain isomorphism \eqref{eqn:polyvectorhomiso}
reads as follows: For all homogeneous $L\in \hom^-_{\pm}\big(\g^{\otimes l},\g^{\otimes m}\big)$
and $L^\prime\in \hom^-_{\pm}\big(\g^{\otimes l^\prime},\g^{\otimes m^\prime}\big)$,
\begin{multline}
(s^{(m-1)n+l-1}L)\tilde{\bullet} (s^{(m^\prime-1)n+l^\prime-1}L^\prime) \,=\,
(-1)^{\vert L\vert \,((m^\prime-1)n+l^\prime-1)}\,(-1)^{n(m^\prime-1)(l-1)}\,\\[-4pt]
\times\, s^{(m+m^\prime-2)n+l+l^\prime-2}\sum_{\substack{\sigma\in\mathrm{Sh}(l-1,l^\prime) \\ \rho\in\mathrm{Sh}(m,m^\prime-1)}} (-1)^{\vert \sigma\vert}\,(- 1)^{\vert \rho\vert\,n}
~\parbox{1.5cm}{\begin{tikzpicture}[scale=0.75]
%input lines
\draw[thick] (-0.25,1.75) -- (-0.25,0);
\draw[thick] (0,1.75) -- (0,0);
\draw[thick] (0.25,1.75) -- (0.25,0);
\draw[thick] (-1,1.75) -- (-1,-1.25);
\draw[thick] (-1.25,1.75) -- (-1.25,-1.25);
%output lines
\draw[thick] (-0.375,-0.25) -- (-0.75,-1);
\draw[thick] (-0.125,0) -- (-0.125,-3);
\draw[thick] (0.125,0) -- (0.125,-3);
\draw[thick] (0.375,0) -- (0.375,-3);
\draw[thick] (-1,-1.5) -- (-1,-3);
\draw[thick] (-1.25,-1.5) -- (-1.25,-3);
\draw[thick] (-0.75,-1.5) -- (-0.75,-3);
%squares
\draw[thick,fill=white] (-0.5,-0.25) rectangle (0.5,0.25) node[pos=.5] {\text{\footnotesize{$L^\prime$}}};
\draw[thick,fill=white] (-1.5,-1.5) rectangle (-0.5,-1) node[pos=.5] {\text{\footnotesize{$L$}}};
%shuffles
\draw[thick,fill=white,dotted] (-1.5,0.75) rectangle (0.5,1.25) node[pos=.5] {\text{\footnotesize{$\gamma_\sigma$}}};
\draw[thick,fill=white,dotted] (-1.5,-2.5) rectangle (0.5,-2) node[pos=.5] {\text{\footnotesize{$\gamma_\rho$}}};
%braces
\draw[thick, decorate,decoration={brace,amplitude=5pt}]
  (-1.35,1.85) -- (-0.9,1.85) node[midway,yshift=1em]{\text{\footnotesize{${l{-}1}$}}};
\draw[thick, decorate,decoration={brace,amplitude=5pt}]
  (-0.35,1.85) -- (0.35,1.85) node[midway,yshift=1em]{\text{\footnotesize{$l^\prime$}}};
\draw[thick, decorate,decoration={brace,amplitude=5pt,mirror}]
  (-1.35,-3.1) -- (-0.65,-3.1) node[midway,yshift=-1em]{\text{\footnotesize{$m$}}};
\draw[thick, decorate,decoration={brace,amplitude=5pt,mirror}]
  (-0.225,-3.1) -- (0.475,-3.1) node[midway,yshift=-1em]{\text{\footnotesize{${m^\prime{-}1}$}}};
\end{tikzpicture}}\qquad,\label{eqn:compositiononmaps}
\end{multline}
where the sums are over all shuffle permutations.
\end{propo}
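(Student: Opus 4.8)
The plan is to reduce \eqref{eqn:compositiononmaps} to a direct computation on monomial representatives and then to organize the resulting Koszul signs. The starting point is that $\bullet$ is, by construction \eqref{eqn:compositionder}, a biderivation on the commutative algebra $\Pol\big(\Sym(\g^\ast[-1]),n\big)$ whose only non-vanishing value on generators is $(s^{-n-1}s\,\theta)\bullet(s^{-n-1}s^n x) = (-1)^{\vert\theta\vert}\,s^{-n-1}\,\du{\theta}{x}$, see \eqref{eqn:compositiongenerators}. Hence $(s^{-n-1}P)\bullet(s^{-n-1}Q)$ is computed by summing, over each choice of one $\g^\ast[-1]$-generator of $P$ and one $\g[-n]$-generator of $Q$, the contraction of that pair via the duality pairing, decorated by the Koszul sign produced by moving those two generators into adjacent position. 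Under the isomorphism \eqref{eqn:polyvectorhomiso}, the $\g^\ast[-1]$-generators of $P$ are precisely the $l$ inputs of the associated map $L$ and the $\g[-n]$-generators of $Q$ the $m'$ outputs of $L'$. Therefore $L\,\tilde{\bullet}\,L'$ plugs one output of $L'$ into one input of $L$, which is exactly the graph on the right-hand side of \eqref{eqn:compositiononmaps}; in particular this already accounts for the $l-1+l'$ inputs, the $m+m'-1$ outputs, and the shift $s^{(m+m'-2)n+l+l'-2}$ (the latter being forced anyway, since the output must land in the $(l+l'-1,m+m'-1)$-summand of $\Pol\big(\Sym(\g^\ast[-1]),n\big)[n+1]$). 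What remains is to pin down the scalar prefactor and the signs $(-1)^{\vert\sigma\vert}(-1)^{\vert\rho\vert n}$ inside the shuffle sum.

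For this I would work with monomials. By \eqref{eqn:polyvector2map} every $L\in\hom^-_{\pm}(\g^{\otimes l},\g^{\otimes m})$ is the image of a monomial $B=x_1\cdots x_m\,\theta_1\cdots\theta_l$ under a sum over $S_l\times S_m$ with prescribed Koszul signs, and $\tilde{\bullet}$ is bilinear, so it suffices to take $L=L_B$ and $L'=L_{B'}$. I would then run the composition through the chain of identifications: (i) transport $B$ and $B'$ back to $\Sym(\g[-n]\oplus\g^\ast[-1])$ along the inverse of \eqref{eqn:polyvectorleftshift}, recording the two exponents $\sum_k\vert x_k\vert((m-k)n+l)$ and $\sum_j\vert\theta_j\vert(l-j)$ from there; (ii) use the biderivation identities \eqref{eqn:bulletleftder}--\eqref{eqn:bulletrightder} to bring the $\theta_j$ of $B$ that is to be contracted to the right end and the $x_k$ of $B'$ to be contracted to the left end, recording the prefactors $(-1)^{\vert\cdot\vert(n+1)}$, $(-1)^{\vert\cdot\vert(\vert\cdot\vert-n-1)}$ and $(-1)^{\vert\cdot\vert\vert\cdot\vert}$; (iii) apply the basic pairing, contributing $(-1)^{\vert\theta_j\vert}\,\du{\theta_j}{x_k}$; (iv) transport the resulting weight-$(m+m'-1)$, degree-$(l+l'-1)$ monomial forward again along \eqref{eqn:polyvectorleftshift}; and (v) re-expand via \eqref{eqn:polyvector2map}. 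At the last step the sum over $S_{l+l'-1}\times S_{m+m'-1}$ factors, relative to the sums already carried by $L_B$ and $L_{B'}$, into the block permutations of $L$ and $L'$ composed with shuffles $\sigma\in\mathrm{Sh}(l-1,l')$ of the two input blocks and $\rho\in\mathrm{Sh}(m,m'-1)$ of the two output blocks; the corresponding signs are exactly $(-1)^{\vert\sigma\vert}$ and $(-1)^{\vert\rho\vert n}$ by \eqref{eqn:input/outputpermutations}.

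The hard part will be the sign accounting: showing that all of the above contributions — from the left-shift isomorphism \eqref{eqn:polyvectorleftshift}, the biderivation prefactors, the pairing sign $(-1)^{\vert\theta\vert}$, and the convention sign $(-1)^{\vert y_1\cdots y_l\vert^{\mathrm{rev}}_{+}}$ built into \eqref{eqn:polyvector2map} — collapse to the stated overall factor $(-1)^{\vert L\vert((m'-1)n+l'-1)}\,(-1)^{n(m'-1)(l-1)}$ times the shuffle sum. I would tame this by first specializing to monomials whose to-be-contracted legs already sit in standard position, so that the $S_l\times S_m$ and $S_{l'}\times S_{m'}$ sums are trivial and only the ``internal'' degrees $\vert x_k\vert,\vert\theta_j\vert$ enter; this isolates the prefactor. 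Only afterwards would I reinstate the permutation sums, using multiplicativity of the Koszul signs under the shuffle/block decomposition. Throughout, the identity $\vert x_k\vert=-\vert\theta_j\vert$ on the contracted pair (needed for $\du{\theta_j}{x_k}\neq 0$) and the weight--degree count, which forces the output to lie in $\hom^-_{\pm}(\g^{\otimes l+l'-1},\g^{\otimes m+m'-1})[-(m+m'-2)n-(l+l'-2)]$, serve as convenient cross-checks; as a final consistency test one may verify that \eqref{eqn:compositiononmaps} is compatible with the biderivation property \eqref{eqn:compositionder} transported along \eqref{eqn:polyvectorhomiso} and with Proposition \ref{prop:bulletcommutator}.
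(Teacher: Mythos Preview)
Your proposal is correct and follows essentially the same route as the paper: represent $L$ and $L'$ by monomial polyvectors (the paper does this via a choice of basis $\{x_a\},\{\theta^a\}$), compute $\bullet$ using the biderivation properties \eqref{eqn:compositionder} and the generator pairing \eqref{eqn:compositiongenerators}, then transport the result back through \eqref{eqn:polyvectorleftshift} and \eqref{eqn:polyvector2map}. Your account is in fact more explicit than the paper's own proof, which only sketches the computation and records the intermediate formula for $(s^{-n-1}P)\bullet(s^{-n-1}P')$ without carrying out the final sign bookkeeping.
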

\begin{rem}
When evaluated on homogeneous elements $y_1,\dots,y_{l+l^\prime-1}\in\g$, the composed map 
\eqref{eqn:compositiononmaps} reads as follows: Using a Sweedler-like notation
\begin{flalign}
L^\prime(y_1,\dots,y_{l^\prime})\,=\,L^\prime_{(0)}(y_1,\dots,y_{l^\prime})\otimes L^\prime_{(1)}(y_1,\dots,y_{l^\prime})\,\in\,\g\otimes\g^{m^\prime-1}\quad,
\end{flalign}
with summations understood, in order to split off the first tensor factor $\g$, we have that
\begin{multline}\label{eqn:compositiononmapsexplicit}
\big((s^{(m-1)n+l-1}L)\tilde{\bullet} (s^{(m^\prime-1)n+l^\prime-1}L^\prime)\big)(y_1,\dots,y_{l+l^\prime-1})\,=\,
(-1)^{\vert L\vert \,((m^\prime-1)n+l^\prime-1)}\,(-1)^{n(m^\prime-1)(l-1)}\\[5pt]
\times ~s^{(m+m^\prime-2)n+l+l^\prime-2}\sum_{\substack{\sigma\in\mathrm{Sh}(l-1,l^\prime)\\ \rho\in\mathrm{Sh}(m,m^\prime-1)}}(-1)^{\vert y_1\cdots y_{l+l^\prime-1}\vert^\sigma_-}~(- 1)^{\vert \rho\vert\,n}~
(-1)^{\sum_{i=1}^{l-1}\vert y_{\sigma(i)}\vert\,\vert L^\prime\vert}\\
\times~\gamma_\rho\Big( L\Big(y_{\sigma(1)},\dots,y_{\sigma(l-1)}, L^\prime_{(0)}(y_{\sigma(l)},\dots,y_{\sigma(l+l^\prime-1)})\Big)\otimes  L^\prime_{(1)}(y_{\sigma(l)},\dots,y_{\sigma(l+l^\prime-1)}) \Big)\quad.
\end{multline}
The last Koszul sign in the second line 
arises from permuting $L^\prime$ with $y_{\sigma(1)}\otimes \cdots\otimes y_{\sigma(l-1)}$.
\end{rem}
\begin{proof}
The proof is a straightforward but lengthy calculation using the explicit descriptions of the cochain isomorphisms
\eqref{eqn:polyvectorleftshift} and \eqref{eqn:polyvector2map}.
We will not spell out this calculation in full detail, but only give some hints. 
Using these isomorphisms, homogeneous elements $s^{mn+l}L\in \hom^-_{\pm}(\g^{\otimes l},\g^{\otimes m})[-mn-l]$ 
and $s^{m^\prime n+l^\prime}L^\prime\in \hom^-_{\pm}(\g^{\otimes l^\prime},\g^{\otimes m^\prime})[-m^\prime n-l^\prime]$ 
can be equivalently presented as $n$-shifted polyvectors, which we expand 
in a choice of basis $\{x_a\in\g\}$ and its dual basis $\{\theta^a\in\g^\ast\}$ as 
\begin{subequations}
\begin{flalign}
P \,&=\,P^{a_1\cdots a_m}_{b_1\cdots b_l}~s^nx_{a_1}\cdots s^nx_{a_m}\,s\theta^{b_1}\cdots s\theta^{b_l}\in\Pol^{m,l}\big(\Sym\big(\g^\ast[-1]\big),n\big)\quad,\\
P^\prime \,&=\,P^{\prime c_1\cdots c_{m^\prime}}_{d_1\cdots d_{l^\prime}}~s^n x_{c_1}\cdots s^nx_{c_{m^\prime}}\,s\theta^{d_1}\cdots s\theta^{d_{l^\prime}}\in\Pol^{m^\prime,l^\prime}\big(\Sym\big(\g^\ast[-1]\big),n\big)\quad,
\end{flalign}
\end{subequations}
where we use Einstein's sum convention to suppress summation symbols.
Using the derivation properties \eqref{eqn:compositionder} and definition on generators
\eqref{eqn:compositiongenerators} of the composition operation $\bullet$, one finds
\begin{multline}
(s^{-n-1}P)\bullet(s^{n-1}P^\prime)\,=\, l\,m^\prime\,(-1)^{\vert \theta^{b_l}\vert} ~(-1)^{\sum_{k=2}^{m^\prime}(\vert x_{c_k}\vert+n)\,\sum_{j=1}^{l-1}(\vert\theta^{b_j}\vert+1)}~
P^{a_1\cdots a_m}_{b_1\cdots b_l}~P^{\prime c_1\cdots c_{m^\prime}}_{d_1\cdots d_{l^\prime}}~\du{\theta^{b_l}}{x_{c_1}}\\
\times~s^{-n-1} s^nx_{a_1}\cdots s^nx_{a_m}\,s^{n}x_{c_2}\cdots s^nx_{c_{m^\prime}}\,s\theta^{b_1}\cdots s\theta^{b_{l-1}}\,s\theta^{d_1}\cdots s\theta^{d_{l^\prime}}\quad.
\end{multline}
Transferring this result through the cochain isomorphisms \eqref{eqn:polyvectorleftshift} and \eqref{eqn:polyvector2map}
yields the element-wise formula \eqref{eqn:compositiononmapsexplicit}, expressed in the chosen of basis, 
which is equivalent to the graphical expression \eqref{eqn:compositiononmaps}.
\end{proof}

Combining this result with Corollary \ref{cor:Poissonviabullet}, we obtain
the following graphical characterization and visualization of 
the Maurer-Cartan identities from Remark \ref{rem:tower} for $n$-shifted Poisson structures on a free CDGA.
\begin{cor}\label{cor:shiftedPoissonfree}
The datum of an $n$-shifted Poisson structure on the free CDGA $\Sym\big(\g^\ast[-1]\big)$ is 
equivalent to a family of maps
\begin{flalign}
\left\{~\parbox{1.2cm}{\begin{tikzpicture}
%input lines
\draw[thick] (-0.25,0.75) -- (-0.25,0);
\draw[thick] (0,0.75) -- (0,0);
\draw[thick] (0.25,0.75) -- (0.25,0);
%output lines
\draw[thick] (-0.375,0) -- (-0.375,-0.75);
\draw[thick] (-0.125,0) -- (-0.125,-0.75);
\draw[thick] (0.125,0) -- (0.125,-0.75);
\draw[thick] (0.375,0) -- (0.375,-0.75);
%squares
\draw[thick,fill=white] (-0.6,-0.25) rectangle (0.6,0.25) node[pos=.5] {\text{\footnotesize{$\pi^{(m,l)}$}}};
\end{tikzpicture}}~\in~ \hom^-_{\pm}\big(\g^{\otimes l},\g^{\otimes m}\big)^{(1-m)n+2-l}~~:~~m\geq 2~,~~l\geq 0\right\}\quad,
\end{flalign}
which, for every fixed $m\geq 2$, is bounded in $l$ and satisfies the following identities
\begin{multline}\label{eqn:shiftedPoissonfree}
(-1)^{(m-1)n+l-1}~\partial ~\parbox{1.2cm}{\begin{tikzpicture}
%input lines
\draw[thick] (-0.25,0.75) -- (-0.25,0);
\draw[thick] (0,0.75) -- (0,0);
\draw[thick] (0.25,0.75) -- (0.25,0);
%output lines
\draw[thick] (-0.375,0) -- (-0.375,-0.75);
\draw[thick] (-0.125,0) -- (-0.125,-0.75);
\draw[thick] (0.125,0) -- (0.125,-0.75);
\draw[thick] (0.375,0) -- (0.375,-0.75);
%squares
\draw[thick,fill=white] (-0.6,-0.25) rectangle (0.6,0.25) node[pos=.5] {$\pi^{\text{\tiny{$(m,l)$}}}$};
\end{tikzpicture}}
~+\sum_{m_1+m_2-1=m}~\sum_{l_1+l_2-1=l}~(-1)^{((1-m_1)n +2-l_1) \,((m_2-1)n+l_2-1)}\\
\times~(-1)^{n(m_2-1)(l_1-1)}~\sum_{\substack{ \sigma\in\mathrm{Sh}(l_1-1,l_2) \\ \rho\in\mathrm{Sh}(m_1,m_2-1)}} 
(-1)^{\vert \sigma\vert}\,(- 1)^{\vert \rho\vert\,n}
~\parbox{2cm}{\begin{tikzpicture}[scale=0.8]
%input lines
\draw[thick] (-0.25,1.75) -- (-0.25,0);
\draw[thick] (0,1.75) -- (0,0);
\draw[thick] (0.25,1.75) -- (0.25,0);
\draw[thick] (-1,1.75) -- (-1,-1.25);
\draw[thick] (-1.25,1.75) -- (-1.25,-1.25);
%output lines
\draw[thick] (-0.375,-0.25) -- (-0.75,-1);
\draw[thick] (-0.125,0) -- (-0.125,-3);
\draw[thick] (0.125,0) -- (0.125,-3);
\draw[thick] (0.375,0) -- (0.375,-3);
\draw[thick] (-1,-1.5) -- (-1,-3);
\draw[thick] (-1.25,-1.5) -- (-1.25,-3);
\draw[thick] (-0.75,-1.5) -- (-0.75,-3);
%squares
\draw[thick,fill=white] (-0.7,-0.25) rectangle (0.7,0.25) node[pos=.5] {\text{\footnotesize{$\pi^{(m_2,l_2)}$}}};
\draw[thick,fill=white] (-1.7,-1.5) rectangle (-0.3,-1) node[pos=.5] {\text{\footnotesize{$\pi^{(m_1,l_1)}$}}};
%shuffles
\draw[thick,fill=white,dotted] (-1.5,0.75) rectangle (0.5,1.25) node[pos=.5] {\text{\footnotesize{$\gamma_\sigma$}}};
\draw[thick,fill=white,dotted] (-1.5,-2.5) rectangle (0.5,-2) node[pos=.5] {\text{\footnotesize{$\gamma_\rho$}}};
\end{tikzpicture}}~~=~~0\qquad,
\end{multline}
for all $m\geq 2$ and $l\geq 0$.
\end{cor}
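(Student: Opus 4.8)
The plan is to unwind Corollary \ref{cor:Poissonviabullet} weight-by-weight using the identification \eqref{eqn:polyvectorhomiso} and the explicit transferred composition operation $\tilde{\bullet}$ from Proposition \ref{propo:compositiongraphicalcalculus}. First I would note that, under the cochain isomorphism \eqref{eqn:polyvectorhomiso}, a completed $n$-shifted polyvector $\pi = \sum_{m\geq 2}\pi^{(m)}$ of degree $n+2$ decomposes uniquely as a family of maps $\pi^{(m,l)}\in\hom^-_\pm(\g^{\otimes l},\g^{\otimes m})$, where the bidegree bookkeeping of \eqref{eqn:polyvectorleftshift} forces $\pi^{(m,l)}$ to live in cohomological degree $(1-m)n+2-l$; the requirement that each weight component $\pi^{(m)}$ be a genuine (finite-symmetric-power) element of $\widehat{\Pol}$ rather than of the doubly-completed object is exactly the statement that, for each fixed $m$, only finitely many $l$ contribute, i.e.\ the family is bounded in $l$. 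The degree-$(n+2)$, weight-$\geq 2$ conditions of Definition \ref{def:shiftedPoisson} thus translate verbatim into the stated index ranges $m\geq 2$, $l\geq 0$ and the degree constraint on each $\pi^{(m,l)}$.

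Next I would transport the Maurer-Cartan equation \eqref{eqn:MCbullet}, namely $\partial(s^{-n-1}\pi) + (s^{-n-1}\pi)\bullet(s^{-n-1}\pi) = 0$, through the same isomorphism. The differential $\partial$ on $\Pol$ corresponds, weight-wise and $l$-wise, to the internal-hom differential on $\hom^-_\pm(\g^{\otimes l},\g^{\otimes m})$ up to the sign $(-1)^{(m-1)n+l-1}$ coming from the left-shift $[-(m-1)n-(l-1)]$ applied after the global shift by $[n+1]$ in \eqref{eqn:Schoutenbracket}; this accounts for the prefactor on the first term of \eqref{eqn:shiftedPoissonfree}. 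For the quadratic term, I would expand $(s^{-n-1}\pi)\bullet(s^{-n-1}\pi) = \sum_{m,l}\big(\sum_{m_1+m_2-1=m}\sum_{l_1+l_2-1=l}(s^{(m_1-1)n+l_1-1}\pi^{(m_1,l_1)})\,\tilde\bullet\,(s^{(m_2-1)n+l_2-1}\pi^{(m_2,l_2)})\big)$, where the index arithmetic $m_1+m_2-1=m$, $l_1+l_2-1=l$ reflects that $\tilde\bullet$ decreases total weight by $1$ (as recorded after \eqref{eqn:Schoutenproperties}) and, by \eqref{eqn:compositiongenerators}, contracts exactly one $\g^\ast$-leg of the left factor against the first $\g$-leg of the right factor, lowering the combined $l$-count by one as well. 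Substituting the graphical formula \eqref{eqn:compositiononmaps} for $\tilde\bullet$, with $L=\pi^{(m_1,l_1)}$, $L'=\pi^{(m_2,l_2)}$, and reading off the sign $(-1)^{\vert\pi^{(m_1,l_1)}\vert\,((m_2-1)n+l_2-1)}(-1)^{n(m_2-1)(l_1-1)}$ together with the shuffle-sum over $\mathrm{Sh}(l_1-1,l_2)\times\mathrm{Sh}(m_1,m_2-1)$, and using $\vert\pi^{(m_1,l_1)}\vert = (1-m_1)n+2-l_1$, reproduces exactly the second term of \eqref{eqn:shiftedPoissonfree}. Equating the weight-$(m)$, $l$-graded component of the whole expression to zero gives the displayed identity for each $(m,l)$, and conversely a bounded-in-$l$ family satisfying all these identities assembles into an $n$-shifted Poisson structure.

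The main obstacle I anticipate is not conceptual but bookkeeping: verifying that the composite of the several Koszul signs produced by \eqref{eqn:polyvectorleftshift}, \eqref{eqn:polyvector2map}, the shift isomorphism \eqref{eqn:shiftiso} used inside \eqref{eqn:Schoutenbracket}, and the explicit $\tilde\bullet$-formula \eqref{eqn:compositiononmaps} collapses precisely to the sign prefactors $(-1)^{((1-m_1)n+2-l_1)((m_2-1)n+l_2-1)}(-1)^{n(m_2-1)(l_1-1)}$ and $(-1)^{\vert\sigma\vert}(-1)^{\vert\rho\vert\,n}$ appearing in \eqref{eqn:shiftedPoissonfree}, with no leftover sign on the $\partial$-term beyond $(-1)^{(m-1)n+l-1}$. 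Since Proposition \ref{propo:compositiongraphicalcalculus} already packages the hardest of these sign computations, and Corollary \ref{cor:Poissonviabullet} already disposes of the factor-of-two and antisymmetry subtleties in passing from $\pb{\,\cdot\,}{\,\cdot\,}$ to $\bullet$, this step reduces to a careful but routine substitution; I would present it by fixing a homogeneous test input $y_1,\dots,y_{l}\in\g$, applying \eqref{eqn:compositiononmapsexplicit}, and matching term by term, deferring the full sign ledger to a remark or leaving it to the reader as the paper does elsewhere.
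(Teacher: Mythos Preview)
Your proposal is correct and follows precisely the route the paper takes: the corollary is obtained by combining Corollary~\ref{cor:Poissonviabullet} with the explicit transferred composition of Proposition~\ref{propo:compositiongraphicalcalculus} via the identification \eqref{eqn:polyvectorhomiso}, and your reading of the degree/boundedness constraints and of the sign prefactors (including the $(-1)^{(m-1)n+l-1}$ from the shift on the differential and the $(-1)^{\vert\pi^{(m_1,l_1)}\vert((m_2-1)n+l_2-1)}(-1)^{n(m_2-1)(l_1-1)}$ from \eqref{eqn:compositiononmaps}) is accurate. The paper itself does not spell out these substitutions but simply states the corollary as the immediate combination of the two preceding results.
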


\paragraph{Semi-free CDGAs:} For any CDGA $A$, there exists, because of \eqref{eqn:Schoutenweight<1},
an identification between degree $1$ derivations $\alpha\in \bbT_{A}^1$ which satisfy
the Maurer-Cartan equation in the DGLA of derivations \eqref{eqn:semifreeMC},
and degree $n+2$ and weight $1$ completed $n$-shifted polyvectors
$\widetilde{\alpha}\in \widehat{\Pol}(A,n)^{n+2}$
which satisfy the Maurer-Cartan equation $\partial(s^{-n-1}\widetilde{\alpha})+\frac{1}{2}
\,\pb{s^{-n-1}\widetilde{\alpha}}{s^{-n-1}\widetilde{\alpha}} = 0$ in $\widehat{\Pol}(A,n)$,
i.e.\ with respect to the Schouten-Nijenhuis bracket. This identification is 
given by $\alpha \mapsto\widetilde{\alpha}= s^{n+1}\alpha$.
\sk

For any CDGA $A$ and Maurer-Cartan element $\alpha\in\bbT_A^1$, 
the $\mathbb{P}_{n+2}$-algebra of completed $n$-shifted polyvectors on the deformed CDGA 
$A_\alpha$ with differential $\dd_\alpha = \dd+\alpha$, see \eqref{eqn:deformeddifferential}, 
can be identified with the deformation
\begin{subequations}
\begin{flalign}
\widehat{\Pol}(A_\alpha,n)\,=\,\widehat{\Pol}(A,n)_\alpha
\end{flalign}
of the $\mathbb{P}_{n+2}$-algebra of
completed $n$-shifted polyvectors on $A$ which is 
given by modifying the differential on $\widehat{\Pol}(A,n)$ according to
\begin{flalign}
\partial_\alpha\,:=\, \partial + (-1)^{n+1}\,s^{n+1} \pb{s^{-n-1}s^{n+1}\alpha}{s^{-n-1}(\,\cdot\,)}\quad.
\end{flalign}
\end{subequations}
The Maurer-Cartan equation for $n$-shifted Poisson structures
$\pi =\sum_{m \geq 2}\pi^{(m)}\in\widehat{\Pol}(A_\alpha,n)^{n+2}$ on $A_\alpha$ 
from Definition \ref{def:shiftedPoisson} can then be rewritten as follows
\begin{flalign}
\nn 0\,&=\,\partial_\alpha \big(s^{-n-1}\pi\big) +\tfrac{1}{2} \pb{s^{-n-1}\pi}{s^{-n-1}\pi}\\
\nn \,&=\,\partial \big(s^{-n-1}\pi\big)+\pb{s^{-n-1}s^{n+1}\alpha}{s^{-n-1}\pi}+
\tfrac{1}{2} \pb{s^{-n-1}\pi}{s^{-n-1}\pi}\\
\nn \,&=\,\partial \big(s^{-n-1}\pi\big) 
- \tfrac{1}{2} \pb{s^{-n-1}s^{n+1}\alpha}{s^{-n-1}s^{n+1}\alpha}
+ \tfrac{1}{2} \pbbig{s^{-n-1}\big(s^{n+1}\alpha+\pi\big)}{s^{-n-1}\big(s^{n+1}\alpha+\pi\big)}\\
\,&=\,\partial \big(s^{-n-1}\big(s^{n+1}\alpha+ \pi\big)\big) + \tfrac{1}{2} \pbbig{s^{-n-1}\big(s^{n+1}\alpha+\pi\big)}{s^{-n-1}\big(s^{n+1}\alpha+\pi\big)}\quad,
\end{flalign}
where in the last step we used the Maurer-Cartan equation for $\alpha$.
This means that an $n$-shifted Poisson structure $\pi$ on the 
deformed CDGA $A_\alpha$ is equivalent to a degree $n+2$ and weight $\geq 1$
completed $n$-shifted polyvector $s^{n+1}\alpha + \pi$, whose weight $1$ component
is dictated by $\alpha$, and which satisfies
the Maurer-Cartan equation in the undeformed $n$-shifted polyvectors $\widehat{\Pol}(A,n)$.
Specializing to semi-free CDGAs, we obtain the following generalization
of Corollary \ref{cor:shiftedPoissonfree}.
\begin{cor}\label{cor:shiftedPoissonsemifree}
The combined datum of a semi-free deformation of the free CDGA $\Sym\big(\g^\ast[-1]\big)$
and an $n$-shifted Poisson structure on the associated semi-free CDGA is 
equivalent to a family of maps (including weight $m=1$ components)
\begin{flalign}
\left\{~\parbox{1.2cm}{\begin{tikzpicture}
%input lines
\draw[thick] (-0.25,0.75) -- (-0.25,0);
\draw[thick] (0,0.75) -- (0,0);
\draw[thick] (0.25,0.75) -- (0.25,0);
%output lines
\draw[thick] (-0.375,0) -- (-0.375,-0.75);
\draw[thick] (-0.125,0) -- (-0.125,-0.75);
\draw[thick] (0.125,0) -- (0.125,-0.75);
\draw[thick] (0.375,0) -- (0.375,-0.75);
%squares
\draw[thick,fill=white] (-0.6,-0.25) rectangle (0.6,0.25) node[pos=.5] {\text{\footnotesize{$\pi^{(m,l)}$}}};
\end{tikzpicture}}~\in~ \hom^-_{\pm}\big(\g^{\otimes l},\g^{\otimes m}\big)^{(1-m)n+2-l}~~:~~m\geq 1~,~~l\geq 0\right\}\quad,
\end{flalign}
which, for every fixed $m\geq 1$, is bounded in $l$ and satisfies the following identities
\begin{multline}\label{eqn:semifreeidentities}
(-1)^{(m-1)n+l-1}~\partial ~\parbox{1.2cm}{\begin{tikzpicture}
%input lines
\draw[thick] (-0.25,0.75) -- (-0.25,0);
\draw[thick] (0,0.75) -- (0,0);
\draw[thick] (0.25,0.75) -- (0.25,0);
%output lines
\draw[thick] (-0.375,0) -- (-0.375,-0.75);
\draw[thick] (-0.125,0) -- (-0.125,-0.75);
\draw[thick] (0.125,0) -- (0.125,-0.75);
\draw[thick] (0.375,0) -- (0.375,-0.75);
%squares
\draw[thick,fill=white] (-0.6,-0.25) rectangle (0.6,0.25) node[pos=.5] {$\pi^{\text{\tiny{$(m,l)$}}}$};
\end{tikzpicture}}
~+\sum_{m_1+m_2-1=m}~\sum_{l_1+l_2-1=l}~(-1)^{((1-m_1)n +2-l_1) \,((m_2-1)n+l_2-1)}\\
\times~(-1)^{n(m_2-1)(l_1-1)}~\sum_{\substack{ \sigma\in\mathrm{Sh}(l_1-1,l_2) \\ \rho\in\mathrm{Sh}(m_1,m_2-1)}}
(-1)^{\vert \sigma\vert}\,(- 1)^{\vert \rho\vert\,n}
~\parbox{2cm}{\begin{tikzpicture}[scale=0.8]
%input lines
\draw[thick] (-0.25,1.75) -- (-0.25,0);
\draw[thick] (0,1.75) -- (0,0);
\draw[thick] (0.25,1.75) -- (0.25,0);
\draw[thick] (-1,1.75) -- (-1,-1.25);
\draw[thick] (-1.25,1.75) -- (-1.25,-1.25);
%output lines
\draw[thick] (-0.375,-0.25) -- (-0.75,-1);
\draw[thick] (-0.125,0) -- (-0.125,-3);
\draw[thick] (0.125,0) -- (0.125,-3);
\draw[thick] (0.375,0) -- (0.375,-3);
\draw[thick] (-1,-1.5) -- (-1,-3);
\draw[thick] (-1.25,-1.5) -- (-1.25,-3);
\draw[thick] (-0.75,-1.5) -- (-0.75,-3);
%squares
\draw[thick,fill=white] (-0.7,-0.25) rectangle (0.7,0.25) node[pos=.5] {\text{\footnotesize{$\pi^{(m_2,l_2)}$}}};
\draw[thick,fill=white] (-1.7,-1.5) rectangle (-0.3,-1) node[pos=.5] {\text{\footnotesize{$\pi^{(m_1,l_1)}$}}};
%shuffles
\draw[thick,fill=white,dotted] (-1.5,0.75) rectangle (0.5,1.25) node[pos=.5] {\text{\footnotesize{$\gamma_\sigma$}}};
\draw[thick,fill=white,dotted] (-1.5,-2.5) rectangle (0.5,-2) node[pos=.5] {\text{\footnotesize{$\gamma_\rho$}}};
\end{tikzpicture}}~~=~~0\qquad,
\end{multline}
for all $m\geq 1$ and $l\geq 0$.
\end{cor}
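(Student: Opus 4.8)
The plan is to combine the reduction from semi-free to free CDGAs carried out in the discussion immediately preceding the statement with the graphical translation already established in the proof of Corollary \ref{cor:shiftedPoissonfree}, the only change being that the lower bound on the weight is relaxed from $m\geq 2$ to $m\geq 1$.

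First I would invoke the computation displayed before the statement, which shows that the combined datum of a semi-free deformation $\alpha\in\bbT_{\Sym(\g^\ast[-1])}^1$ of $\Sym(\g^\ast[-1])$ and an $n$-shifted Poisson structure $\pi=\sum_{m\geq 2}\pi^{(m)}$ on the associated semi-free CDGA is the same thing as a single completed $n$-shifted polyvector
\[
\Pi\,:=\,s^{n+1}\alpha+\pi\,\in\,\widehat{\Pol}\big(\Sym(\g^\ast[-1]),n\big)^{n+2}
\]
of degree $n+2$ and weight $\geq 1$, whose weight $1$ component equals $s^{n+1}\alpha$, satisfying the Maurer-Cartan equation $\partial(s^{-n-1}\Pi)+\tfrac12\pb{s^{-n-1}\Pi}{s^{-n-1}\Pi}=0$ in the \emph{undeformed} polyvectors with respect to the Schouten--Nijenhuis bracket. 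Here one uses that extracting the weight $1$ component of this equation recovers precisely the Maurer-Cartan equation \eqref{eqn:semifreeMC} for $\alpha$ (under the identification $\alpha\mapsto s^{n+1}\alpha$ recalled at the start of that paragraph), while the weight $\geq 2$ components encode the Maurer-Cartan equation \eqref{eqn:MCshiftedPoisson} for $\pi$ on $A_\alpha$; note that the $\alpha$-twist of the differential \eqref{eqn:deformeddifferential} is thereby absorbed into the Schouten bracket terms involving the weight $1$ component.

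Next, since $\Sym(\g^\ast[-1])$ is a free CDGA, Proposition \ref{prop:bulletcommutator} applies, and exactly as in the proof of Corollary \ref{cor:Poissonviabullet}---which only uses $(-1)^{(|\Pi|-n-1)^2}=(-1)^1=-1$ and hence nowhere needs weight $\geq 2$---the Maurer-Cartan equation above is equivalent to $\partial(s^{-n-1}\Pi)+(s^{-n-1}\Pi)\bullet(s^{-n-1}\Pi)=0$ in terms of the composition operation \eqref{eqn:composition}. I would then transport this identity along the cochain isomorphism \eqref{eqn:polyvectorhomiso}, which identifies the bihomogeneous weight-$m$, symmetric-power-$l$ component of $\Pi$ with a map $\pi^{(m,l)}\in\hom^-_{\pm}(\g^{\otimes l},\g^{\otimes m})$; the degree count for $\Pol^{m,l}\cong\hom^-_{\pm}(\g^{\otimes l},\g^{\otimes m})[-mn-l]$ places $\pi^{(m,l)}$ in degree $(1-m)n+2-l$, as asserted. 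Decomposing the equation into its $(m,l)$-components and substituting the graphical formula for $\tilde\bullet$ from Proposition \ref{propo:compositiongraphicalcalculus} turns each component into exactly the identity \eqref{eqn:semifreeidentities}, now for all $m\geq 1$; here $\partial$ is the differential on $\hom^-_{\pm}(\g^{\otimes l},\g^{\otimes m})$ induced by $\dd_\g$, and terms for which the inner map has no input or the outer map has no output vanish identically, so effectively the double sum ranges over $l_1\geq 1$ and $m_2\geq 1$. Boundedness in $l$ for each fixed $m$ is immediate from the fact that the completion \eqref{eqn:shiftedpolygcomplete} completes only the weight, leaving the direct sum over the symmetric power intact, so each weight component of $\Pi$ has only finitely many nonzero $\pi^{(m,l)}$.

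The only place requiring genuine care---and the expected (mild) obstacle---is the bookkeeping at weight $1$: one must check that the $m=1$ instances of \eqref{eqn:semifreeidentities}, in which the quadratic term reduces to a single $\tilde\bullet$-product $\pi^{(1,l_1)}\tilde\bullet\,\pi^{(1,l_2)}$, indeed reproduce the Maurer-Cartan equation \eqref{eqn:semifreeMC} for $\alpha$, and that the $m\geq 2$ instances with one of $m_1,m_2$ equal to $1$ correctly reproduce the $\alpha$-twisted part of the conditions on $\pi$. Since the Schouten--Nijenhuis bracket and its $\bullet$-presentation, together with Proposition \ref{propo:compositiongraphicalcalculus}, are valid for all weights, this amounts to running the proof of Corollary \ref{cor:shiftedPoissonfree} verbatim with the weight bound lowered by one and verifying that no sign or combinatorial factor degenerates in the new cases $m\geq 1$; no new input beyond the results already cited is needed.
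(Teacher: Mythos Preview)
Your proposal is correct and follows essentially the same approach as the paper: the paper's ``proof'' is simply the sentence ``Specializing to semi-free CDGAs, we obtain the following generalization of Corollary \ref{cor:shiftedPoissonfree}'', relying on the computation immediately preceding the statement (which repackages $\alpha$ and $\pi$ into a single weight $\geq 1$ Maurer--Cartan element in the undeformed polyvectors) together with the already-established graphical translation from Corollary \ref{cor:shiftedPoissonfree}. Your write-up spells out this logic in more detail---in particular the degree count, the boundedness in $l$, and the weight-$1$ bookkeeping---but the argument is identical.
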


\begin{rem}\label{rem:comparison}
We would like to observe that, in the case where 
$\g\in\Ch$ is a non-positively graded $N$-term cochain complex as in \eqref{eqn:Nterm} below, 
our combined datum from Corollary \ref{cor:shiftedPoissonsemifree} 
of a semi-free deformation of the free CDGA $\Sym\big(\g^\ast[-1]\big)$
and an $n$-shifted Poisson structure on the associated semi-free CDGA 
is equivalent to that of an $L_\infty[0,n-1]$-quasi-bialgebra structure in 
the sense of \cite[Definition 2.5]{Zhu}. In our notation,
these authors introduce, for any non-positively graded
vector space $\g$ and any integers $c,d\in\bbZ$,
the completed symmetric algebra
\begin{flalign}\label{eqn:Zhu}
\prod_{m,l\geq 0}\Sym^m\big(\g[-1-d]\big)\otimes \Sym^{l}\big(\g^{\ast}[-1-c]\big)
\end{flalign}
and endow it with the so-called `big bracket' $\langle\!\langle\,\cdot\,,\,
\cdot\,\rangle\!\rangle$ which is given by extending the duality pairing
$\langle\,\cdot\,,\,\cdot\,\rangle : \g^\ast\otimes \g\to\bbK$ to a biderivation.
They then define an $L_\infty[c,d]$-quasi-bialgebra structure on $\g$
in terms of a degree $3+c+d$ element $t = \sum_{m\geq 1}\sum_{l\geq 0} t^{(m,l)}$
satisfying $\langle\!\langle t,t\rangle\!\rangle=0$.
Comparing \eqref{eqn:Zhu} with \eqref{eqn:shiftedpolyg} and \eqref{eqn:shiftedpolygcomplete},
we observe that, up to a completion in $l$ which is inessential
for $N$-term cochain complexes as in \eqref{eqn:Nterm}, the derived geometric concept
of $n$-shifted polyvectors is given by choosing the integers $(c,d)= (0,n-1)$ in \eqref{eqn:Zhu}.
Furthermore, from \eqref{eqn:Schouteng} we see that the Schouten–Nijenhuis bracket $\pb{\,\cdot\,}{\,\cdot\,}$ 
agrees with the `big bracket' $\langle\!\langle\,\cdot\,,\,\cdot\,\rangle\!\rangle$ because both are given
on the generators by the duality pairing. To relate the identity
$\langle\!\langle t,t\rangle\!\rangle=0$ with the Maurer-Cartan equation in Definition \ref{def:shiftedPoisson},
note that \cite[Definition 2.5]{Zhu} encodes the differential $\dd$ on $\g$ in terms of the $t^{(1,1)}$-component.
Splitting off the differential, one can write $t = \dd + t^\prime$ and one observes
that $\langle\!\langle t,t\rangle\!\rangle=0$ is equivalent to 
the Maurer-Cartan equation for the remainder $t^\prime$. 
\end{rem}

%%%%%%%%%%%%%%%%%%%%%%%%%%%%%%%%%%%%%%%%%%%%%%%%
%%%%%%%%%%%%%%%%%%%%%%%%%%%%%%%%%%%%%%%%%%%%%%%%

\section{\label{sec:higherLie}Application to higher Chevalley-Eilenberg algebras}
In this section we consider the case where 
\begin{flalign}\label{eqn:Nterm}
\g\,:=\,\Big(
\xymatrix{
\g^{-N+1}\ar[r]^-{\dd}\,&\, \g^{-N+2} \ar[r]^-{\dd}\,&\,\cdots \ar[r]^-{\dd}\,&\,\g^{-1} \ar[r]^-{\dd} \,&\,\g^0
}\Big)\,\in\,\Ch
\end{flalign}
is an $N$-term cochain complex of finite-dimensional vector spaces
which is concentrated in non-positive degrees, where $N\in\bbZ^{\geq 1}$ is a positive integer.
Let us recall that a \textit{Lie $N$-algebra} is a pair $(\g,\ell)$ consisting
of an $N$-term cochain complex as in \eqref{eqn:Nterm} and a family 
$\ell = \big\{\ell_l\in\hom^-(\g^{\otimes l},\g)^{2-l}\big\}_{l\geq 2}$ 
of totally antisymmetric linear maps of degree $2-l$ which satisfy the homotopy Jacobi identities,
see e.g.\ \cite{KraftSchnitzer} and Corollary \ref{cor:Linftystructures} below.
In other words, a Lie $N$-algebra is precisely an $L_\infty$-algebra whose underlying cochain
complex is of the $N$-term form \eqref{eqn:Nterm}. By a simple degree counting argument,
one observes that $\ell_l=0$ must necessarily vanish for all $l> 1+N$,
hence the family of $L_\infty$-brackets $\ell = \{\ell_l\}_{l\geq2}$
is bounded for every  Lie $N$-algebra.
\sk

It is well-known that $L_\infty$-algebra structures on $\g$ 
correspond to (a subclass of) the semi-free deformations of the free CDGA $\Sym\big(\g^\ast[-1]\big)$,
see e.g.\ \cite{KraftSchnitzer}. We recover this result as a special case of 
our graphical calculus from Corollary \ref{cor:shiftedPoissonsemifree}.
\begin{cor}\label{cor:Linftystructures}
Let $\g\in\Ch$ be an $N$-term cochain complex as in \eqref{eqn:Nterm}
and consider a family of maps $\{\pi^{(m,l)}\}_{m\geq 1,l\geq 0}$ as in Corollary 
\ref{cor:shiftedPoissonsemifree} with $\pi^{(1,0)}=0$, $\pi^{(1,1)}=0$ and 
$\pi^{(m,l)} = 0$, for all $m\geq 2$ and $l\geq 0$. Then the identities
\eqref{eqn:semifreeidentities} are equivalent to the $L_\infty$-algebra structure identities
in the sign conventions of \cite[Remark 3.6]{KraftSchnitzer} 
for the family of $L_\infty$-brackets  $\{\ell_l :=(-1)^{l-1} \, \pi^{(1,l)}\}_{l\geq 2}$.
\end{cor}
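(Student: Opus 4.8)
The plan is to feed the vanishing hypotheses $\pi^{(1,0)}=\pi^{(1,1)}=0$ and $\pi^{(m,l)}=0$ for all $m\geq 2$ into the graphical identities \eqref{eqn:semifreeidentities} of Corollary \ref{cor:shiftedPoissonsemifree}, and to read off that what survives is exactly the $L_\infty$-relations for the rescaled maps $\ell_l=(-1)^{l-1}\pi^{(1,l)}$. The first observation is that the identities of weight $m\geq 2$ hold automatically: the term $\partial\,\pi^{(m,l)}$ vanishes by hypothesis, and in the double sum the constraint $m_1+m_2=m+1\geq 3$ forces at least one of $m_1,m_2$ to be $\geq 2$, so the corresponding factor $\pi^{(m_i,l_i)}$ annihilates every summand. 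Hence only the weight-$1$ identities carry content, and they constrain the family $\{\pi^{(1,l)}\}_{l\geq 2}$, each member of which already lies in $\hom^-_{\pm}\big(\g^{\otimes l},\g\big)^{2-l}$ and is thus totally graded-antisymmetric of degree $2-l$, matching an $L_\infty$-bracket.

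For $m=1$ the constraint $m_1+m_2=2$ forces $m_1=m_2=1$. This collapses the $\rho$-sum to the trivial shuffle $\mathrm{Sh}(1,0)$, makes the prefactor $(-1)^{n(m_2-1)(l_1-1)}$ equal to $1$, and reduces the sign $(-1)^{((1-m_1)n+2-l_1)((m_2-1)n+l_2-1)}$ to $(-1)^{(2-l_1)(l_2-1)}$. In particular all dependence on the shift $n$ drops out, consistently with the fact that an $L_\infty$-structure carries no shift parameter; one may therefore perform the remaining bookkeeping with $n$ even, so that $\hom^-_{\pm}=\hom^-$ and no output (anti)symmetry enters. Specialising \eqref{eqn:compositiononmapsexplicit} to $m=m^\prime=1$, the composition appearing in \eqref{eqn:semifreeidentities} becomes the insertion of the single output of $\pi^{(1,l_2)}$ into the last input slot of $\pi^{(1,l_1)}$, with the other $l_1-1$ inputs reshuffled by $\gamma_\sigma$ — exactly the shape of a summand of the homotopy Jacobi relation of weight $l=l_1+l_2-1$.

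Next I would unpack the term $\partial\,\pi^{(1,l)}$. Here $\partial$ is the differential on $\hom^-\big(\g^{\otimes l},\g\big)$ induced, via \eqref{eqn:Chhom}, by the differential $\ell_1:=\dd$ of the given $N$-term complex \eqref{eqn:Nterm}, namely $\partial L=\dd\circ L-(-1)^{\vert L\vert}L\circ\dd_{\g^{\otimes l}}$. Since the summands with $l_1=1$ or $l_2=1$ vanish because $\pi^{(1,1)}=0$, the piece $\dd\circ\pi^{(1,l)}$ supplies the $\ell_1\circ\ell_l$ term and the piece $\pi^{(1,l)}\circ\dd_{\g^{\otimes l}}$ supplies the terms in which $\ell_1$ is inserted into an input slot of $\ell_l$, while the $l_1,l_2\geq 2$ summands supply the genuinely higher terms of the weight-$l$ Jacobi relation. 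It then remains to check that, after substituting $\pi^{(1,l)}=(-1)^{l-1}\ell_l$, the overall prefactor $(-1)^{l-1}$, the shuffle signs $(-1)^{\vert\sigma\vert}$, the combinatorial sign $(-1)^{(2-l_1)(l_2-1)}$, and the Koszul signs produced by \eqref{eqn:compositiononmapsexplicit} combine into precisely the sign convention used for the $L_\infty$-relations in \cite[Remark 3.6]{KraftSchnitzer}. I expect this sign comparison to be the only genuine obstacle; it is a finite term-by-term verification — conveniently organised by the arity of the inner bracket — and the normalisation $\ell_l=(-1)^{l-1}\pi^{(1,l)}$ is exactly what is needed to reconcile the two conventions.

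Finally, the boundedness requirement in Corollary \ref{cor:shiftedPoissonsemifree} is automatic here: since $\g$ is concentrated in degrees $[-N+1,0]$, any map in $\hom^-\big(\g^{\otimes l},\g\big)$ of degree $2-l$ vanishes once $l>N+1$, so $\ell_l=0$ for $l>N+1$ and the family of brackets is finite, recovering the elementary degree-counting fact about Lie $N$-algebras recorded after \eqref{eqn:Nterm}.
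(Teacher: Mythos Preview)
Your proposal is correct and follows essentially the same route as the paper: reduce to the weight-$1$ identities, observe that the $n$-dependence and the output shuffles drop out, and then match signs against the Kraft--Schnitzer convention under $\ell_l=(-1)^{l-1}\pi^{(1,l)}$. The one concrete step the paper carries out that you leave as a ``finite term-by-term verification'' is the graphical use of input antisymmetry to move the internal wire from the last slot of $\pi^{(1,l_1)}$ to the first, picking up $(-1)^{l_1-1}$ and converting the $(l_1{-}1,l_2)$-shuffles into $(l_2,l_1{-}1)$-shuffles; this is exactly the manipulation that makes the sign comparison transparent.
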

\begin{proof}
Let us first observe that the non-vanishing maps 
$\pi^{(1,l)}\in \hom_{\pm}^-(\g^{\otimes l},\g)^{2-l}$, for $l\geq 2$, 
have the correct degrees and antisymmetry properties required for an $L_\infty$-algebra structure.
Using that $\pi^{(m,l)} = 0$, for all $m\geq 2$ and $l\geq 0$, the identities in \eqref{eqn:semifreeidentities}
simplify to
\begin{flalign}\label{eqn:tmpLinfty}
(-1)^{l-1}~\partial ~\parbox{1.2cm}{\begin{tikzpicture}
%input lines
\draw[thick] (-0.25,0.75) -- (-0.25,0);
\draw[thick] (0,0.75) -- (0,0);
\draw[thick] (0.25,0.75) -- (0.25,0);
%output lines
\draw[thick] (0,0) -- (0,-0.75);
%squares
\draw[thick,fill=white] (-0.6,-0.25) rectangle (0.6,0.25) node[pos=.5] {$\pi^{\text{\tiny{$(1,l)$}}}$};
\end{tikzpicture}}
~+\sum_{l_1+l_2-1=l}~(-1)^{(2-l_1) \,(l_2-1)}~\sum_{\sigma\in\mathrm{Sh}(l_1-1,l_2)} (-1)^{\vert \sigma\vert}
~\parbox{2cm}{\begin{tikzpicture}[scale=0.8]
%input lines
\draw[thick] (-0.25,1.75) -- (-0.25,0);
\draw[thick] (0,1.75) -- (0,0);
\draw[thick] (0.25,1.75) -- (0.25,0);
\draw[thick] (-1,1.75) -- (-1,-1.25);
\draw[thick] (-1.25,1.75) -- (-1.25,-1.25);
%output lines
\draw[thick] (0,-0.25) -- (-0.75,-1);
\draw[thick] (-1,-1.5) -- (-1,-2);
%squares
\draw[thick,fill=white] (-0.7,-0.25) rectangle (0.7,0.25) node[pos=.5] {\text{\footnotesize{$\pi^{(1,l_2)}$}}};
\draw[thick,fill=white] (-1.7,-1.5) rectangle (-0.3,-1) node[pos=.5] {\text{\footnotesize{$\pi^{(1,l_1)}$}}};
%shuffles
\draw[thick,fill=white,dotted] (-1.5,0.75) rectangle (0.5,1.25) node[pos=.5] {\text{\footnotesize{$\gamma_\sigma$}}};
\end{tikzpicture}}~~=~~0\quad,
\end{flalign}
for all $l\geq 2$. Using the antisymmetry properties
\eqref{eqn:input/outputpermutations}, we can permute 
the inputs of $\pi^{(1,l_1)}$ and find
\begin{flalign}
\parbox{2cm}{\begin{tikzpicture}[scale=0.8]
%input lines
\draw[thick] (-0.25,1.75) -- (-0.25,0);
\draw[thick] (0,1.75) -- (0,0);
\draw[thick] (0.25,1.75) -- (0.25,0);
\draw[thick] (-1,1.75) -- (-1,-1.25);
\draw[thick] (-1.25,1.75) -- (-1.25,-1.25);
%output lines
\draw[thick] (0,-0.25) -- (-0.75,-1);
\draw[thick] (-1,-1.5) -- (-1,-2);
%squares
\draw[thick,fill=white] (-0.7,-0.25) rectangle (0.7,0.25) node[pos=.5] {\text{\footnotesize{$\pi^{(1,l_2)}$}}};
\draw[thick,fill=white] (-1.7,-1.5) rectangle (-0.3,-1) node[pos=.5] {\text{\footnotesize{$\pi^{(1,l_1)}$}}};
%shuffles
\draw[thick,fill=white,dotted] (-1.5,0.75) rectangle (0.5,1.25) node[pos=.5] {\text{\footnotesize{$\gamma_\sigma$}}};
\end{tikzpicture}}~~=~ 
(-1)^{l_1-1}~~
\parbox{2cm}{\begin{tikzpicture}[scale=0.8]
%input lines
\draw[thick] (0.25,0.25)  -- (1.25,1);
\draw[thick] (1.25,1.5) -- (1.25,2);
\draw[thick] (0,0.25) -- (1,1);
\draw[thick] (1,1.5) -- (1,2);
\draw[thick] (-0.25,0.25)  -- (0.75,1);
\draw[thick] (0.75,1.5) -- (0.75,2);
\draw[thick]  (-0.125,1)-- (1,0.25) -- (1,-1.25);
\draw[thick]  (-0.125,1.5)-- (-0.125,2);
\draw[thick] (0.125,1)-- (1.25,0.25) -- (1.25,-1.25);
\draw[thick] (0.125,1.5)-- (0.125,2);
%output lines
\draw[thick] (0,-0.25) -- (0.75,-1);
\draw[thick] (1,-1.5) -- (1,-2);
%squares
\draw[thick,fill=white] (-0.7,-0.25) rectangle (0.7,0.25) node[pos=.5] {\text{\footnotesize{$\pi^{(1,l_2)}$}}};
\draw[thick,fill=white] (1.7,-1.5) rectangle (0.3,-1) node[pos=.5] {\text{\footnotesize{$\pi^{(1,l_1)}$}}};
%shuffles
\draw[thick,fill=white,dotted] (-0.5,1) rectangle (1.5,1.5) node[pos=.5] {\text{\footnotesize{$\gamma_\sigma$}}};
\end{tikzpicture}}\quad.
\end{flalign}
Inserting this back into \eqref{eqn:tmpLinfty} and using also that the sum over shuffles yields
an antisymmetric map, we obtain
\begin{flalign}
(-1)^{l-1}~\partial ~\parbox{1.2cm}{\begin{tikzpicture}
%input lines
\draw[thick] (-0.25,0.75) -- (-0.25,0);
\draw[thick] (0,0.75) -- (0,0);
\draw[thick] (0.25,0.75) -- (0.25,0);
%output lines
\draw[thick] (0,0) -- (0,-0.75);
%squares
\draw[thick,fill=white] (-0.6,-0.25) rectangle (0.6,0.25) node[pos=.5] {$\pi^{\text{\tiny{$(1,l)$}}}$};
\end{tikzpicture}}
~+\sum_{l_1+l_2-1=l}~(-1)^{l_2-1}~\sum_{\sigma\in\mathrm{Sh}(l_2,l_1-1)} (-1)^{\vert \sigma\vert}~~
~\parbox{2cm}{\begin{tikzpicture}[scale=0.8]
%input lines
\draw[thick] (0.25,0.25)  -- (0.25,1.75);
\draw[thick] (0,0.25) -- (0,1.75);
\draw[thick] (-0.25,0.25)  -- (-0.25,1.75);
\draw[thick]  (1,1.75)-- (1,0.25) -- (1,-1.25);
\draw[thick] (1.25,1.75)-- (1.25,0.25) -- (1.25,-1.25);
%output lines
\draw[thick] (0,-0.25) -- (0.75,-1);
\draw[thick] (1,-1.5) -- (1,-2);
%squares
\draw[thick,fill=white] (-0.7,-0.25) rectangle (0.7,0.25) node[pos=.5] {\text{\footnotesize{$\pi^{(1,l_2)}$}}};
\draw[thick,fill=white] (1.7,-1.5) rectangle (0.3,-1) node[pos=.5] {\text{\footnotesize{$\pi^{(1,l_1)}$}}};
%shuffles
\draw[thick,fill=white,dotted] (-0.5,0.75) rectangle (1.5,1.25) node[pos=.5] {\text{\footnotesize{$\gamma_\sigma$}}};
\end{tikzpicture}}~~~=~~0\quad,
\end{flalign}
for all $l\geq 2$, where the sum in this expression is over $(l_2,l_1{-}1)$-shuffles, 
in contrast to the $(l_1{-}1,l_2)$-shuffles in \eqref{eqn:tmpLinfty}.
Under the identification $\ell_l :=(-1)^{l-1} \, \pi^{(1,l)}$, for all $l\geq 2$, this
gives the $L_\infty$-algebra structure identities in the sign conventions
of \cite[Remark 3.6]{KraftSchnitzer}.
\end{proof}

\begin{rem}
Dropping in Corollary \ref{cor:Linftystructures} 
the requirement that $\pi^{(1,0)} = 0 $ and $\pi^{(1,1)}=0$ leads to more general
semi-free deformations of the CDGA $\Sym\big(\g^\ast[-1]\big)$. 
The component $\pi^{(1,1)}$ induces a deformation of the given
differential $\dd$ of $\g\in\Ch$ and the component $\pi^{(1,0)}$ introduces a violation
of the square-zero condition of the differential, which is called
curvature and leads to the concept of curved $L_\infty$-algebras. 
We do not consider these more general deformations in this work and restrict
our attention to ordinary $L_\infty$-algebras.
\end{rem}

The \textit{Chevalley-Eilenberg algebra} of a 
Lie $N$-algebra $\g=(\g,\ell)$ is defined as the semi-free deformation 
\begin{flalign}
\CE(\g)\,:=\, \big(\Sym\big(\g^{\ast}[-1]\big)\big)_\ell
\end{flalign} 
of the free CDGA $\Sym\big(\g^\ast[-1]\big)$ 
along the Maurer-Cartan element from Corollary \ref{cor:Linftystructures} 
which is associated to the $L_\infty$-brackets $\ell = \{\ell_l\}_{l\geq 2}$.
One can interpret
this semi-free CDGA as an algebraic model for the formal classifying stack 
$\mathrm{B}\g=[\mathrm{pt}/\g]$ of the Lie $N$-algebra,
see e.g.\ \cite[Example 3.6]{Pridham} and \cite[Examples 1.10]{PridhamOutline}.
Corollary \ref{cor:shiftedPoissonsemifree} then provides a characterization
of the $n$-shifted Poisson structures on $\CE(\g)$ by fixing the weight $m=1$ 
components of the family of maps $\{\pi^{(m,l)}\}_{m\geq 1,l\geq 0}$ in terms of the $L_\infty$-brackets
according to $\pi^{(1,0)}=0$, $\pi^{(1,1)}=0$ and $\pi^{(1,l)} = (-1)^{l-1}\,\ell_l$, for all $l\geq 2$.
The aim of the following two subsections is to work out this characterization explicitly
for positive $n\geq 1$ in the simplest two cases where $\g$ is an ordinary Lie algebra 
and where $\g$ is a Lie $2$-algebra.
In order to facilitate this characterization, let us start with a general observation
about the relationship between the shift $n$ of the shifted Poisson structure and the positive 
integer $N$ of the Lie $N$-algebra.
\begin{lem}\label{lem:degreecounting}
Let $\g=(\g,\ell)$ be any Lie $N$-algebra. Then every $n$-shifted
Poisson structure on $\CE(\g)$ is necessarily trivial for $n>2N$.
\end{lem}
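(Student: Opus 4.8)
The plan is to run a pure degree-counting argument on the components $\pi^{(m,l)}$ of an $n$-shifted Poisson structure, using the description from Corollary \ref{cor:shiftedPoissonsemifree}. Recall that each component is a map
\[
\pi^{(m,l)}\,\in\, \hom^-_{\pm}\big(\g^{\otimes l},\g^{\otimes m}\big)^{(1-m)n+2-l}
\]
of cohomological degree $(1-m)n+2-l$, and that for an $N$-term complex $\g$ as in \eqref{eqn:Nterm} all nonzero homogeneous elements of $\g$ sit in degrees $\{-N+1,\dots,0\}$. First I would record the elementary bounds: a nonzero element of $\g^{\otimes l}$ has degree in $[-l(N-1),0]$, a nonzero element of $\g^{\otimes m}$ in $[-m(N-1),0]$, and hence a nonzero homogeneous map $\g^{\otimes l}\to\g^{\otimes m}$ of degree $d$ can only exist when $d$ lies in the interval $[-m(N-1),\, l(N-1)]$. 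Applying this to $d=(1-m)n+2-l$ gives the two necessary inequalities
\[
(1-m)n+2-l\,\ge\,-m(N-1)
\qquad\text{and}\qquad
(1-m)n+2-l\,\le\,l(N-1)\quad.
\]

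Next I would extract from these the relevant constraint. Since the shifted Poisson structure by definition has weight $\ge 2$ (Definition \ref{def:shiftedPoisson}), together with the $L_\infty$-deformation data we need only worry about $m\ge 2$; for such $m$ the coefficient $(1-m)$ of $n$ is negative, so the first inequality is the useful one. Rearranging the first inequality gives $l \le (m-1)(N-1) - (m-1)n + 1 + N = (m-1)(N-1-n) + N + 1$; in particular, when $n > 2N$ we have $N-1-n < N-1-2N = -N-1 < 0$, so $l \le (m-1)(N-1-n)+N+1$, and since $m\ge 2$ forces $m-1\ge 1$, the right-hand side is at most $(N-1-n)+N+1 = 2N-n < 0$. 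As $l\ge 0$ this is impossible, so $\pi^{(m,l)}=0$ for all $m\ge 2$ and all $l\ge 0$; that is exactly the statement that the $n$-shifted Poisson structure is trivial. I would write this out cleanly, being careful that the bound I use is an \emph{integer} bound and that the $m=2$ case is genuinely the extremal one (larger $m$ only makes $(m-1)(N-1-n)$ more negative, so the conclusion is a fortiori true).

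The only subtlety — and the step I expect to require the most care rather than ingenuity — is bookkeeping around which components are constrained: the definition of an $n$-shifted Poisson structure involves only $\pi^{(m)}$ for $m\ge 2$, while the combined datum in Corollary \ref{cor:shiftedPoissonsemifree} also carries the $\pi^{(1,l)}$ fixed by the $L_\infty$-brackets of $\g$. Triviality of the shifted Poisson structure means precisely the vanishing of the $m\ge 2$ part, so it suffices to bound those; I should state this explicitly so the reader is not worried about the $m=1$ row. I would also double-check the degree convention once more against \eqref{eqn:polyvectorhomiso}: a weight-$m$, symmetric-power-$l$ polyvector of total degree $n+2$ corresponds, after the left-shift isomorphism, to a map in $\hom^-_\pm(\g^{\otimes l},\g^{\otimes m})$ of degree $(n+2)-(mn+l) = (1-m)n+2-l$, confirming the exponent used above. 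With that in hand the argument is a three-line inequality chain.
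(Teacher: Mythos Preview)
Your proof is correct and follows essentially the same approach as the paper: both arguments use the degree-counting inequality $m(-N+1)\le (1-m)n+2-l$ coming from the support of $\hom(\g^{\otimes l},\g^{\otimes m})$, combined with $m\ge 2$ and $n>2N$. The only cosmetic difference is that the paper rearranges the inequality to obtain the contradiction $m<2$, whereas you rearrange it to obtain $l\le 2N-n<0$; these are equivalent manipulations of the same bound.
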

\begin{proof}
We argue by degree counting that every component $\pi^{(m,l)}\in 
\hom^-_{\pm}\big(\g^{\otimes l},\g^{\otimes m}\big)^{(1-m)n+2-l}$,
for $m\geq 2$ and $l\geq 0$, of an $n$-shifted Poisson structure $\pi$ on $\CE(\g)$
is necessarily trivial in the case where $n>2N$. Since $\g$ is by hypothesis
\eqref{eqn:Nterm} concentrated in degrees $\{-N+1,\dots,0\}$, it follows that 
the internal hom complex $\hom\big(\g^{\otimes l},\g^{\otimes m}\big)\cong
\g^{\otimes m}\otimes {\g^\ast}^{\otimes l}$ is concentrated
in degrees $\{m(-N+1),\dots, l(N-1)\}$. Hence, for the existence
of a non-trivial element $\pi^{(m,l)}$ of degree $(1-m)n+2-l$ the bounds
\begin{flalign}\label{eqn:inequality}
m(-N+1)\,\leq\, (1-m)n+2-l\,\leq\, l(N-1)
\end{flalign}
must be satisfied. 
Since $m\geq 2$, we have that $1-m<0$
and hence the hypothesis $n>2N$ of this lemma yields the inequality
\begin{flalign}
(1-m)n+2-l\,<\, (1-m)2N +2-l\,\leq\, (1-m)2N +2\quad,
\end{flalign}
where in the last step we also used that $l\geq 0$. Combining this inequality 
with the first inequality in \eqref{eqn:inequality} we find
\begin{flalign}
m\,(-N+1)\,<\,(1-m)2N +2\quad \Longleftrightarrow\quad m(N+1)\,<\, 2(N+1)\quad 
\Longleftrightarrow\quad m\,< \,2\quad.
\end{flalign}
This is inconsistent with the fact that every shifted Poisson structure has only 
components with $m\geq 2$, which completes our proof.
\end{proof}

\begin{conv}
For better readability of the graphical identities 
in the following subsections, we will adopt the compact notation
\begin{subequations}\label{eqn:shuffle-sums-suppressed}
\begin{flalign}\label{eqn:shuffle-sums-suppressed-even}
\parbox{1.25cm}{\begin{tikzpicture}
%input and output lines
\draw[thick] (-0.6,-0.6) -- (-0.65,-1.7);
\draw[thick] (-0.2,-0.6) -- (-0.2,-1.7);
\draw[thick] (0.2,-0.6) -- (0.2,-1.7);
\draw[thick] (0.6,-0.6) -- (0.65,-1.7);
%squares
\draw[thick,fill=white,dotted] (-0.8,-1.4) rectangle (0.8,-0.9) node[pos=0.5] {$\scriptstyle(k_1,k_2)^\even$};
\draw[thick, decorate,decoration={brace,amplitude=5pt}]
  (-0.65,-0.5) -- (-0.15,-0.5) node[midway,yshift=1em]{$\scriptstyle k_1$};
\draw[thick, decorate,decoration={brace,amplitude=5pt}]
  (0.15,-0.5) -- (0.65,-0.5) node[midway,yshift=1em]{$\scriptstyle k_2$};
\end{tikzpicture}}
~~~~~&:=~\sum_{\tau\in\mathrm{Sh}(k_1,k_2)\!\!}~~\parbox{1.25cm}{\begin{tikzpicture}
%input and output lines
\draw[thick] (-0.375,-0.6) -- (-0.375,-1.7);
\draw[thick] (-0.125,-0.6) -- (-0.125,-1.7);
\draw[thick] (0.125,-0.6) -- (0.125,-1.7);
\draw[thick] (0.375,-0.6) -- (0.375,-1.7);
%squares
\draw[thick,fill=white,dotted] (-0.6,-1.4) rectangle (0.6,-0.9) node[pos=0.5] {$\gamma_\tau$};
\end{tikzpicture}}\qquad , 
\\[5pt]\label{eqn:shuffle-sums-suppressed-odd}
\parbox{1.25cm}{\begin{tikzpicture}
%input and output lines
\draw[thick] (-0.6,-0.6) -- (-0.65,-1.7);
\draw[thick] (-0.2,-0.6) -- (-0.2,-1.7);
\draw[thick] (0.2,-0.6) -- (0.2,-1.7);
\draw[thick] (0.6,-0.6) -- (0.65,-1.7);
%squares
\draw[thick,fill=white,dotted] (-0.8,-1.4) rectangle (0.8,-0.9) node[pos=0.5] {$\scriptstyle(k_1,k_2)^\odd$};
\draw[thick, decorate,decoration={brace,amplitude=5pt}]
  (-0.65,-0.5) -- (-0.15,-0.5) node[midway,yshift=1em]{$\scriptstyle k_1$};
\draw[thick, decorate,decoration={brace,amplitude=5pt}]
  (0.15,-0.5) -- (0.65,-0.5) node[midway,yshift=1em]{$\scriptstyle k_2$};
\end{tikzpicture}}
~~~~~&:=~
\sum_{\tau\in\mathrm{Sh}(k_1,k_2)\!\!}(-1)^{\vert \tau \vert}~~\parbox{1.25cm}{\begin{tikzpicture}
%input and output lines
\draw[thick] (-0.375,-0.6) -- (-0.375,-1.7);
\draw[thick] (-0.125,-0.6) -- (-0.125,-1.7);
\draw[thick] (0.125,-0.6) -- (0.125,-1.7);
\draw[thick] (0.375,-0.6) -- (0.375,-1.7);
%squares
\draw[thick,fill=white,dotted] (-0.6,-1.4) rectangle (0.6,-0.9) node[pos=0.5] {$\gamma_\tau$};
\end{tikzpicture}}\qquad ,
\end{flalign}
\end{subequations}
which suppresses the summation over $(k_1,k_2)$-shuffle permutations and their signs. 
Note that in the identities for $n$-shifted Poisson structures in Corollary \ref{cor:shiftedPoissonsemifree},
the shuffles attached to the inputs of maps $\hom_{\pm}^{-}\big(\g^{\otimes l},\g^{\otimes m}\big)$
always carry odd signs as in \eqref{eqn:shuffle-sums-suppressed-odd} 
while the shuffles attached to the outputs carry either odd signs \eqref{eqn:shuffle-sums-suppressed-odd} 
or even signs \eqref{eqn:shuffle-sums-suppressed-even} depending on the parity of $n$.
\end{conv}

\subsection{\label{subsec:Lie}Ordinary Lie algebras}
Suppose that the cochain complex $\g$ is concentrated in degree $0$, which necessarily forces the differential 
$\dd=0$ to be trivial. It follows that the internal homs $\hom(\g^{\otimes m},\g^{\otimes l})$ are concentrated
in degree $0$ too and carry the trivial differential $\partial =0$. This implies that 
the weight $1$ components $\pi^{(1,l)}\in \hom_{\pm}^-(\g^{\otimes l},\g)^{2-l}$
of the family of maps in Corollary \ref{cor:shiftedPoissonsemifree} are necessarily
trivial for $l\neq 2$. We shall visualize the only non-trivial component by
\begin{flalign}\label{eqn:Liebracket}
\pi^{(1,2)}\,=~\parbox{0.4cm}{\begin{tikzpicture}[scale=0.6]
\draw[fill=black] (0,0) circle (2pt);
\draw[thick] (0,0) -- (0,-0.5);
\draw[thick] (-0.25,0.5) -- (0,0);
\draw[thick] (0.25,0.5) -- (0,0);
\end{tikzpicture}}\in ~\hom_{\pm}^{-}\big(\g^{\otimes 2},\g\big)^0\quad.
\end{flalign}
As a consequence of the identities \eqref{eqn:semifreeidentities} for $m=1$ (see also 
Corollary \ref{cor:Linftystructures}), 
it follows that this defines a Lie algebra structure on $\g$, i.e.\ the Jacobi identity
\begin{flalign}
0~=~
\parbox{0.8cm}{\begin{tikzpicture}[scale=0.6]
\draw[fill=black] (0,0) circle (2pt);
\draw[thick] (0,0) -- (0,-0.5);
\draw[thick] (-0.5,1) -- (0,0);
\draw[thick] (0.25,0.5) -- (0,0);
\draw[fill=black] (0.25,0.5) circle (2pt);
\draw[thick]  (0.25,0.5) -- (0,1);
\draw[thick]  (0.25,0.5) -- (0.5,1);
\draw[thick] (-0.5,1) -- (-0.5,2);
\draw[thick] (0,1) -- (0,2);
\draw[thick] (0.5,1) -- (0.5,2);
\draw[thick,fill=white,dotted] (-0.8,0.9) rectangle (0.8,1.6) node[pos=.5] {$\scriptstyle(1,2)^\odd$};
\end{tikzpicture}}~~~=~
\parbox{0.6cm}{\begin{tikzpicture}[scale=0.6]
\draw[fill=black] (0,0) circle (2pt);
\draw[thick] (0,0) -- (0,-0.5);
\draw[thick] (-0.5,1) -- (0,0);
\draw[thick] (0.25,0.5) -- (0,0);
\draw[fill=black] (0.25,0.5) circle (2pt);
\draw[thick]  (0.25,0.5) -- (0,1);
\draw[thick]  (0.25,0.5) -- (0.5,1);
\draw[thick] (-0.5,1) -- (-0.5,1.5);
\draw[thick] (0,1) -- (0,1.5);
\draw[thick] (0.5,1) -- (0.5,1.5);
\end{tikzpicture}}
~~-~~
\parbox{0.6cm}{\begin{tikzpicture}[scale=0.6]
\draw[fill=black] (0,0) circle (2pt);
\draw[thick] (0,0) -- (0,-0.5);
\draw[thick] (-0.5,1) -- (0,0);
\draw[thick] (0.25,0.5) -- (0,0);
\draw[fill=black] (0.25,0.5) circle (2pt);
\draw[thick]  (0.25,0.5) -- (0,1);
\draw[thick]  (0.25,0.5) -- (0.5,1);
\draw[thick] (-0.5,1) -- (0,1.5);
\draw[thick] (0,1) -- (-0.5,1.5);
\draw[thick] (0.5,1) -- (0.5,1.5);
\end{tikzpicture}}
~~+~~
\parbox{0.6cm}{\begin{tikzpicture}[scale=0.6]
\draw[fill=black] (0,0) circle (2pt);
\draw[thick] (0,0) -- (0,-0.5);
\draw[thick] (-0.5,1) -- (0,0);
\draw[thick] (0.25,0.5) -- (0,0);
\draw[fill=black] (0.25,0.5) circle (2pt);
\draw[thick]  (0.25,0.5) -- (0,1);
\draw[thick]  (0.25,0.5) -- (0.5,1);
\draw[thick] (-0.5,1) -- (0.5,1.5);
\draw[thick] (0,1) -- (-0.5,1.5);
\draw[thick] (0.5,1) -- (0,1.5);
\end{tikzpicture}}
\end{flalign}
holds true. (The usual form of the Jacobi identity with cyclic permutations, in contrast to shuffle
permutations, is obtained by using antisymmetry \eqref{eqn:input/outputpermutations} at the 
top vertex of the second term.) 
\sk

Our aim is to characterize the $n$-shifted Poisson structures
on the associated Chevalley-Eilenberg algebra $\CE(\g)$ of this Lie algebra,
for all $n\geq 1$. Using Lemma \ref{lem:degreecounting},
we observe that the $n$-shifted Poisson structures
on the Chevalley-Eilenberg algebra $\CE(\g)$ are trivial for all $n>2$. 
For $n=2$ and $n=1$, we recover from Corollary \ref{cor:shiftedPoissonsemifree}
the results of Safronov \cite[Proposition 2.6 and Theorem 2.8]{Safronov},
which we shall restate in our graphical calculus in the following two propositions.

\begin{propo}\label{prop:2shiftedLie}
A $2$-shifted Poisson structure on the Chevalley-Eilenberg algebra $\CE(\g)$ of an ordinary Lie algebra
$\g$ with bracket \eqref{eqn:Liebracket} is given by the datum of a degree $0$ map
\begin{flalign}
\pi^{(2,0)}\,=~\parbox{0.4cm}{\begin{tikzpicture}[scale=0.6]
\draw[fill=black] (0,0) circle (2pt);
\draw[thick] (0,0) -- (-0.25,-0.5);
\draw[thick] (0,0) -- (0.25,-0.5);
\end{tikzpicture}}\in~ \hom_{+}^-\big(\bbK,\g^{\otimes 2}\big)^0\quad,
\end{flalign}
which satisfies the identity
\begin{flalign}
0~&=~
\parbox{0.8cm}{\begin{tikzpicture}[scale=0.6]
\draw[fill=black] (0,0) circle (2pt);
\draw[thick] (0,0) -- (-0.25,-0.5);
\draw[thick] (0,0) -- (0.5,-1);
\draw[fill=black] (-0.25,-0.5) circle (2pt);
\draw[thick] (-0.75,0.5) -- (-0.25,-0.5) -- (-0.25,-1);
\draw[thick] (-0.25,-1) -- (-0.25,-2);
\draw[thick] (0.5,-1) -- (0.5,-2);
\draw[thick,fill=white,dotted] (-0.7,-0.9) rectangle (1,-1.6) node[pos=.5] {$\scriptstyle (1,1)^\even$};
\end{tikzpicture}}
~~~=~
\parbox{0.8cm}{\begin{tikzpicture}[scale=0.6]
\draw[fill=black] (0,0) circle (2pt);
\draw[thick] (0,0) -- (-0.25,-0.5);
\draw[thick] (0,0) -- (0.5,-1);
\draw[fill=black] (-0.25,-0.5) circle (2pt);
\draw[thick] (-0.75,0.5) -- (-0.25,-0.5) -- (-0.25,-1);
\draw[thick] (-0.25,-1) -- (-0.25,-1.6);
\draw[thick] (0.5,-1) -- (0.5,-1.6);
\end{tikzpicture}}
~+~
\parbox{0.8cm}{\begin{tikzpicture}[scale=0.6]
\draw[fill=black] (0,0) circle (2pt);
\draw[thick] (0,0) -- (-0.25,-0.5);
\draw[thick] (0,0) -- (0.5,-1);
\draw[fill=black] (-0.25,-0.5) circle (2pt);
\draw[thick] (-0.75,0.5) -- (-0.25,-0.5) -- (-0.25,-1);
\draw[thick] (-0.25,-1) -- (0.5,-1.6);
\draw[thick] (0.5,-1) -- (-0.25,-1.6);
\end{tikzpicture}}
~~=~0\quad.
\end{flalign}
This datum is equivalent to an adjoint action invariant symmetric tensor in $\g^{\otimes 2}$, 
i.e., an element in $(\Sym^2\,\g)^\g$.
\end{propo}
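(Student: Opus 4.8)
The plan is to feed the free datum of a $2$-shifted Poisson structure through the graphical characterization of Corollary~\ref{cor:shiftedPoissonsemifree} and then perform a degree count in the spirit of Lemma~\ref{lem:degreecounting}. Since $\g$ sits entirely in degree $0$, every complex $\hom(\g^{\otimes l},\g^{\otimes m})$ is concentrated in degree $0$ with zero differential, so a component $\pi^{(m,l)}\in\hom^-_\pm(\g^{\otimes l},\g^{\otimes m})^{(1-m)\cdot 2+2-l}$ can be non-zero only if $(1-m)\cdot 2+2-l=0$, i.e.\ $2m+l=4$. With $m\geq 1$ this forces $(m,l)\in\{(1,2),(2,0)\}$: the first component is pinned down to be the given Lie bracket $\pi^{(1,2)}$ of \eqref{eqn:Liebracket} because we work on $\CE(\g)$ (so $\pi^{(1,0)}=\pi^{(1,1)}=0$, which is also automatic here by degree), and $\pi^{(2,0)}\in\hom^-_+(\bbK,\g^{\otimes 2})^0$ is the only free parameter. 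The ${}^-$ on $\hom^-_+$ is vacuous for a $\bbK$-input and the ${}^+$ records total symmetry of the two outputs because $n=2$ is even, so $\hom^-_+(\bbK,\g^{\otimes 2})^0\cong\Sym^2\g$. Hence a $2$-shifted Poisson structure amounts to a symmetric tensor $\pi^{(2,0)}\in\Sym^2\g$ subject to whatever identities of Corollary~\ref{cor:shiftedPoissonsemifree} survive.

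Next I would scan the identities \eqref{eqn:semifreeidentities}. The derivative term always drops out since $\partial=0$. For output weight $m=1$ both composition factors have weight $1$, so only the already-assumed Lie--Jacobi identities for $\pi^{(1,2)}$ appear and $\pi^{(2,0)}$ is irrelevant. For output weight $m\geq 3$ a non-vanishing factor of weight $\geq 2$ must be $\pi^{(2,0)}$, which has no inputs; but the composition $\tilde{\bullet}$ of Proposition~\ref{propo:compositiongraphicalcalculus} requires its left factor to have at least one input (otherwise the shuffle set $\mathrm{Sh}(l_1-1,l_2)$ is empty) and its right factor to have at least one output, and a short case check shows every such term vanishes. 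The only surviving non-trivial identity is the one of output bidegree $(m,l)=(2,1)$: here a weight-$1$ factor in the left slot is non-zero only for $l_1=2$, hence equals $\pi^{(1,2)}$, its partner is then forced to be $\pi^{(2,0)}$, and the opposite ordering $\pi^{(2,0)}\,\tilde{\bullet}\,\pi^{(1,2)}$ vanishes because $\pi^{(2,0)}$ has no inputs. So \eqref{eqn:semifreeidentities} collapses to the single equation
\[
\pi^{(1,2)}\,\tilde{\bullet}\,\pi^{(2,0)}\,=\,0\qquad\text{in }\hom^-_+\big(\g,\g^{\otimes 2}\big)^0\quad.
\]

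It remains to evaluate this one equation using the explicit formula \eqref{eqn:compositiononmapsexplicit}. Writing $\pi^{(2,0)}=\sum_\alpha r_\alpha\otimes r^\alpha\in\Sym^2\g$ and $[\,\cdot\,,\,\cdot\,]=\pi^{(1,2)}$, the composition feeds the first output leg $r_\alpha$ into an input of the bracket and symmetrizes the two output legs; the output shuffle set is $\mathrm{Sh}(1,1)$ and carries no sign since $n=2$ is even, and all Koszul and prefactor signs in \eqref{eqn:compositiononmapsexplicit} are trivial because everything in sight has degree $0$. One thus finds, up to an overall non-zero sign, that $\big(\pi^{(1,2)}\,\tilde{\bullet}\,\pi^{(2,0)}\big)(x)=\sum_\alpha\big([x,r_\alpha]\otimes r^\alpha+r^\alpha\otimes[x,r_\alpha]\big)$, and using the symmetry of $\pi^{(2,0)}$ the second summand equals $\sum_\alpha r_\alpha\otimes[x,r^\alpha]$, so the whole expression is the diagonal adjoint action $\big(\ad_x\otimes\id+\id\otimes\ad_x\big)\big(\pi^{(2,0)}\big)$. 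Hence the surviving identity says precisely that $\pi^{(2,0)}$ is annihilated by $\ad_x$ for every $x\in\g$, i.e.\ $\pi^{(2,0)}\in(\Sym^2\g)^\g$; conversely any such invariant tensor solves it and, all remaining components being zero, defines a $2$-shifted Poisson structure, which gives the claimed equivalence. The only delicate point is this last sign bookkeeping in \eqref{eqn:compositiononmapsexplicit}, but since $\g$ is concentrated in degree $0$ these signs are inert and the computation is short.
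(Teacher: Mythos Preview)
Your proposal is correct and follows the same approach as the paper's proof: both invoke Corollary~\ref{cor:shiftedPoissonsemifree}, perform the degree count $(1-m)\cdot 2+2-l=0$ to isolate $\pi^{(2,0)}$ as the only free component, and read off the adjoint invariance condition from the $(m,l)=(2,1)$ case of \eqref{eqn:semifreeidentities}. The paper's proof is terser---it does not spell out why the higher-weight identities are vacuous or carry out the explicit evaluation of the $(2,1)$ equation as you do---but the argument is the same.
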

\begin{proof}
This result follows directly from Corollary \ref{cor:shiftedPoissonsemifree} and a simple degree 
counting argument. For the latter recall that the cochain complex 
$\hom(\g^{\otimes l},\g^{\otimes m})$ is concentrated in degree $0$,
so for a component $\pi^{(m,l)}\in \hom^-_{+}(\g^{\otimes l},\g^{\otimes m})^{(1-m)2+2-l}$
to be non-trivial its degree must satisfy $(1-m)2+2-l = 4-2m -l=0$. For $m\geq 2$ and $l\geq 0$, 
this is only the case for $\pi^{(2,0)}$. The adjoint action invariance condition follows then from
the $(m,l) = (2,1)$ component of \eqref{eqn:semifreeidentities}.
\end{proof}

\begin{propo}\label{prop:1shiftedLie}
A $1$-shifted Poisson structure on the Chevalley-Eilenberg algebra $\CE(\g)$ of an ordinary Lie algebra
$\g$ with bracket \eqref{eqn:Liebracket} is given by the datum of two degree $0$ maps
\begin{flalign}
\pi^{(2,1)}\,=~\parbox{0.4cm}{\begin{tikzpicture}[scale=0.6]
\draw[fill=black] (0,0) circle (2pt);
\draw[thick] (0,0) -- (-0.25,-0.5);
\draw[thick] (0,0) -- (0.25,-0.5);
\draw[thick] (0,0.5) -- (0,0);
\end{tikzpicture}}\in~ \hom_{-}^{-}\big(\g,\g^{\otimes 2}\big)^0\quad,\qquad
\pi^{(3,0)}\,=~\parbox{0.6cm}{\begin{tikzpicture}[scale=0.6]
\draw[fill=black] (0,0) circle (2pt);
\draw[thick] (0,0) -- (-0.35,-0.5);
\draw[thick] (0,0) -- (0,-0.5);
\draw[thick] (0,0) -- (0.35,-0.5);
\end{tikzpicture}}~\in~ \hom_{-}^{-}\big(\bbK,\g^{\otimes 3}\big)^0\quad,
\end{flalign}
which satisfy the identities
\begin{subequations}
\begin{flalign}
0~&=~\parbox{0.4cm}{\begin{tikzpicture}[scale=0.7]
\draw[fill=black] (0,0) circle (2pt);
\draw[thick] (-0.25,0.5) -- (0,0);
\draw[thick] (0.25,0.5) -- (0,0);
\draw[thick] (0,0) -- (0,-0.5);
\draw[fill=black] (0,-0.5) circle (2pt);
\draw[thick] (0,-0.5) -- (-0.25,-1);
\draw[thick] (0,-0.5) -- (0.25,-1);
\end{tikzpicture}}~~ -
\parbox{0.9cm}{\begin{tikzpicture}[scale=0.6]
\draw[fill=black] (0,0) circle (2pt);
\draw[thick] (0,1.5)-- (0,0.5) -- (0,0);
\draw[thick] (0,0) -- (-0.25,-0.5);
\draw[thick] (0,0) -- (0.5,-1) -- (0.5,-2);
\draw[fill=black] (-0.25,-0.5) circle (2pt);
\draw[thick] (-0.25,-0.5) -- (-0.75,0.5) -- (-0.75,1.5);
\draw[thick] (-0.25,-0.5) -- (-0.25,-2) ;
\draw[thick,fill=white,dotted] (-0.7,-0.9) rectangle (0.9,-1.6) node[pos=.5] {$\scriptstyle(1,1)^\odd$};
\draw[thick,fill=white,dotted] (-1.2,0.5) rectangle (0.4,1.2) node[pos=.5] {$\scriptstyle(1,1)^\odd$};
\end{tikzpicture}}~~\quad,\\[5pt]
0~&=~
\parbox{0.8cm}{\begin{tikzpicture}[scale=0.6]
\draw[fill=black] (0,0) circle (2pt);
\draw[thick] (0,0) -- (0,0.5);
\draw[thick] (0.5,-1) -- (0,0);
\draw[thick] (-0.25,-0.5) -- (0,0);
\draw[fill=black] (-0.25,-0.5) circle (2pt);
\draw[thick]  (-0.25,-0.5) -- (0,-1);
\draw[thick]  (-0.25,-0.5) -- (-0.5,-1);
\draw[thick] (0.5,-1) -- (0.5,-2);
\draw[thick] (0,-1) -- (0,-2);
\draw[thick] (-0.5,-1) -- (-0.5,-2);
\draw[thick,fill=white,dotted] (-0.8,-1) rectangle (0.8,-1.7) node[pos=.5] {$\scriptstyle(2,1)^\odd$};
\end{tikzpicture}}~~~+~~
\parbox{0.85cm}{\begin{tikzpicture}[scale=0.6]
\draw[fill=black] (0,0) circle (2pt);
\draw[thick] (0,0) -- (-0.25,-0.5);
\draw[thick] (0,0) -- (0.25,-1) -- (0.25,-2);
\draw[thick] (0,0) -- (0.75,-1) -- (0.75,-2);
\draw[fill=black] (-0.25,-0.5) circle (2pt);
\draw[thick] (-0.25,-0.5) -- (-0.25,-2);
\draw[thick] (-0.25,-0.5) -- (-0.75,0.5);
\draw[thick,fill=white,dotted] (-0.55,-1) rectangle (1.05,-1.7) node[pos=.5] {$\scriptstyle(1,2)^\odd$};
\end{tikzpicture}}~~\quad,\\[5pt]
0 ~&=~\parbox{1cm}{\begin{tikzpicture}[scale=0.6]
\draw[fill=black] (0,0) circle (2pt);
\draw[thick] (0,0) -- (-0.5,-0.5);
\draw[thick] (0,0) -- (0.25,-1) -- (0.25,-2);
\draw[thick] (0,0) -- (0.75,-1) -- (0.75,-2);
\draw[fill=black] (-0.5,-0.5) circle (2pt);
\draw[thick] (-0.5,-0.5) -- (-0.75,-1) -- (-0.75,-2);
\draw[thick] (-0.5,-0.5) -- (-0.25,-1) -- (-0.25,-2);
\draw[thick,fill=white,dotted] (-1,-1) rectangle (1,-1.7) node[pos=.5] {$\scriptstyle(2,2)^\odd$};
\end{tikzpicture}}~~\quad.
\end{flalign}
\end{subequations}
These data are equivalent to a quasi-Lie bialgebra structure on the Lie algebra 
$\g$, see e.g.\ \cite[Section 16.2]{EtingofSchiffmann}. 
\end{propo}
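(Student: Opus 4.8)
The plan is to reduce the a priori infinite Maurer--Cartan system of Corollary~\ref{cor:shiftedPoissonsemifree} to a finite list of data and identities by a degree argument, exactly as in the proof of Proposition~\ref{prop:2shiftedLie}, and then to match the surviving identities with the axioms of a quasi-Lie bialgebra, following \cite[Theorem~2.8]{Safronov}. First I would observe that, since $\g$ is concentrated in degree~$0$, each complex $\hom(\g^{\otimes l},\g^{\otimes m})$ is concentrated in degree~$0$ with $\partial=0$; hence a component $\pi^{(m,l)}\in\hom^-_{-}(\g^{\otimes l},\g^{\otimes m})^{(1-m)+2-l}$ can be non-zero only when $3-m-l=0$, and together with the constraint $m\geq 2$ from Definition~\ref{def:shiftedPoisson} this leaves exactly $\pi^{(2,1)}$ and $\pi^{(3,0)}$. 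Because $n=1$ is odd, the output-antisymmetry built into $\hom^-_{-}$ forces $\delta:=\pi^{(2,1)}$ to land in $\Lambda^2\g$ and $\varphi:=\pi^{(3,0)}$ to land in $\Lambda^3\g$, so the data of a $1$-shifted Poisson structure is precisely a degree-$0$ cobracket $\delta\colon\g\to\Lambda^2\g$ together with a trivector $\varphi\in\Lambda^3\g$.

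Next I would determine which of the identities \eqref{eqn:semifreeidentities} are non-trivial. Since $\partial\pi^{(m,l)}=0$, the identity indexed by $(m,l)$ is a signed shuffle-sum of composites $\pi^{(m_1,l_1)}\,\tilde{\bullet}\,\pi^{(m_2,l_2)}$ with $m_1+m_2-1=m$ and $l_1+l_2-1=l$; by Proposition~\ref{propo:compositiongraphicalcalculus} such a composite vanishes unless $l_1\geq 1$, $m_2\geq 1$, and both factors lie in $\{\pi^{(1,2)},\delta,\varphi\}$, where $\pi^{(1,2)}$ denotes the Lie bracket fixed via Corollary~\ref{cor:Linftystructures}. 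Enumerating the admissible pairs shows that \eqref{eqn:semifreeidentities} is automatic for every $(m,l)$ except four: $(m,l)=(1,3)$, from $\pi^{(1,2)}\,\tilde{\bullet}\,\pi^{(1,2)}$, which is the Jacobi identity of the Lie bracket and holds by hypothesis; $(m,l)=(2,2)$, from $\pi^{(2,1)}\,\tilde{\bullet}\,\pi^{(1,2)}$ and $\pi^{(1,2)}\,\tilde{\bullet}\,\pi^{(2,1)}$, which gives the first displayed identity; $(m,l)=(3,1)$, from $\pi^{(2,1)}\,\tilde{\bullet}\,\pi^{(2,1)}$ and $\pi^{(1,2)}\,\tilde{\bullet}\,\pi^{(3,0)}$, which gives the second; and $(m,l)=(4,0)$, from $\pi^{(2,1)}\,\tilde{\bullet}\,\pi^{(3,0)}$, which gives the third. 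Thus a $1$-shifted Poisson structure on $\CE(\g)$ is exactly a pair $(\delta,\varphi)$ satisfying the three displayed identities.

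Finally I would expand these three graphical identities element-wise using \eqref{eqn:compositiononmapsexplicit} and recognize them as the standard quasi-Lie bialgebra axioms of \cite[Section~16.2]{EtingofSchiffmann}: the $(2,2)$-identity says that $\delta$ is a $1$-cocycle of $\g$ with values in $\Lambda^2\g$ (equivalently, a derivation for the adjoint action); the $(3,1)$-identity says that the co-Jacobiator of $\delta$ equals the adjoint action on $\varphi$; and the $(4,0)$-identity says that the total antisymmetrization of $(\delta\otimes\id\otimes\id)\varphi$ vanishes. Since every object in sight has degree~$0$, all Koszul signs in \eqref{eqn:compositiononmapsexplicit} and in the prefactors of \eqref{eqn:semifreeidentities} collapse to signs depending only on $n=1$ and on shuffle signatures, and the $\mathrm{Sh}(1,1)$-, $\mathrm{Sh}(2,0)$- and $\mathrm{Sh}(2,2)$-sums assemble into exactly the (anti)symmetrizations of the classical axioms, up to the customary normalization of $\varphi$. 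I expect this last step---tracking the signs of the shuffle sums and matching them against a fixed set of conventions for quasi-Lie bialgebras---to be the main obstacle; it is essentially \cite[Theorem~2.8]{Safronov} transported into our graphical calculus, and I would carry it out one identity at a time.
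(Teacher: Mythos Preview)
Your proposal is correct and follows exactly the same approach as the paper: the paper's own proof says only that the result is a direct consequence of Corollary~\ref{cor:shiftedPoissonsemifree} together with the degree-counting observation that $\hom(\g^{\otimes l},\g^{\otimes m})$ is concentrated in degree~$0$, which forces $3-m-l=0$ and hence $(m,l)\in\{(2,1),(3,0)\}$. Your write-up simply spells out in detail what the paper leaves implicit, including the enumeration of the surviving identities and the identification with the quasi-Lie bialgebra axioms via \cite[Theorem~2.8]{Safronov}.
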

\begin{proof}
The proof is again a direct consequence of Corollary 
\ref{cor:shiftedPoissonsemifree} and a simple degree counting argument, hence 
we do not have to spell out the details.
\end{proof}

\subsection{\label{subsec:2Lie}Lie $2$-algebras}
Consider a $2$-term cochain complex
\begin{flalign}\label{eqn:2term}
\g\,:=\,\Big(
\xymatrix{
\g^{-1} \ar[r]^-{\dd} \,&\,\g^0
}\Big)\,\in\,\Ch\quad.
\end{flalign}
Applying Corollary \ref{cor:Linftystructures} and a simple degree counting argument,
one observes that an $L_\infty$-algebra structure on $\g$ consists of
two maps
\begin{flalign}\label{eqn:Lie2bracket}
\pi^{(1,2)}\,=~\parbox{0.4cm}{\begin{tikzpicture}[scale=0.6]
\draw[fill=black] (0,0) circle (2pt);
\draw[thick] (0,0) -- (0,-0.5);
\draw[thick] (-0.25,0.5) -- (0,0);
\draw[thick] (0.25,0.5) -- (0,0);
\end{tikzpicture}}\in ~\hom_{\pm}^{-}\big(\g^{\otimes 2},\g\big)^0\quad,\qquad
\pi^{(1,3)}\,=~\parbox{0.55cm}{\begin{tikzpicture}[scale=0.6]
\draw[fill=black] (0,0) circle (2pt);
\draw[thick] (0,0) -- (0,-0.5);
\draw[thick] (-0.35,0.5) -- (0,0);
\draw[thick] (0,0.5) -- (0,0);
\draw[thick] (0.35,0.5) -- (0,0);
\end{tikzpicture}}\in ~\hom_{\pm}^{-}\big(\g^{\otimes 3},\g\big)^{-1}
\end{flalign}
of degree $0$ and, respectively, degree $-1$. The $L_\infty$-algebra structure identities
in \eqref{eqn:semifreeidentities} (see also Corollary \ref{cor:Linftystructures}) 
specialize in the present case to the identities
\begin{subequations}\label{eqn:Lie2bracketidentities}
\begin{flalign}
\partial\,\parbox{0.4cm}{\begin{tikzpicture}[scale=0.6]
\draw[fill=black] (0,0) circle (2pt);
\draw[thick] (0,0) -- (0,-0.5);
\draw[thick] (-0.25,0.5) -- (0,0);
\draw[thick] (0.25,0.5) -- (0,0);
\end{tikzpicture}}~&=~0\quad,\\[5pt]
-\partial\,\parbox{0.55cm}{\begin{tikzpicture}[scale=0.6]
\draw[fill=black] (0,0) circle (2pt);
\draw[thick] (0,0) -- (0,-0.5);
\draw[thick] (-0.35,0.5) -- (0,0);
\draw[thick] (0,0.5) -- (0,0);
\draw[thick] (0.35,0.5) -- (0,0);
\end{tikzpicture}}~&=~
\parbox{0.8cm}{\begin{tikzpicture}[scale=0.6]
\draw[fill=black] (0,0) circle (2pt);
\draw[thick] (0,0) -- (0,-0.5);
\draw[thick] (-0.5,1) -- (0,0);
\draw[thick] (0.25,0.5) -- (0,0);
\draw[fill=black] (0.25,0.5) circle (2pt);
\draw[thick]  (0.25,0.5) -- (0,1);
\draw[thick]  (0.25,0.5) -- (0.5,1);
\draw[thick] (-0.5,1) -- (-0.5,2);
\draw[thick] (0,1) -- (0,2);
\draw[thick] (0.5,1) -- (0.5,2);
\draw[thick,fill=white,dotted] (-0.8,0.9) rectangle (0.8,1.6) node[pos=.5] {$\scriptstyle(1,2)^\odd$};
\end{tikzpicture}}~\quad,\\[5pt]
0~&=~-~
\parbox{1.2cm}{\begin{tikzpicture}[scale=0.6]
\draw[fill=black] (0,0) circle (2pt);
\draw[thick] (0,0) -- (0,-0.5);
\draw[thick] (-0.5,1) -- (0,0);
\draw[thick] (0,1) -- (0,0);
\draw[thick] (0.5,0.5) -- (0,0);
\draw[fill=black] (0.5,0.5) circle (2pt);
\draw[thick]  (0.5,0.5) -- (0.25,1);
\draw[thick]  (0.5,0.5) -- (0.75,1);
\draw[thick] (-0.5,1) -- (-0.5,2);
\draw[thick] (0,1) -- (0,2);
\draw[thick] (0.25,1) -- (0.25,2);
\draw[thick] (0.75,1) -- (0.75,2);
\draw[thick,fill=white,dotted] (-0.7,1) rectangle (0.9,1.7) node[pos=.5] {$\scriptstyle(2,2)^\odd$};
\end{tikzpicture}}
+~
\parbox{0.8cm}{\begin{tikzpicture}[scale=0.6]
\draw[fill=black] (0,0) circle (2pt);
\draw[thick] (0,0) -- (0,-0.5);
\draw[thick] (-0.5,1) -- (0,0);
\draw[thick] (0.25,0.5) -- (0,0);
\draw[fill=black] (0.25,0.5) circle (2pt);
\draw[thick]  (0.25,0.5) -- (0,1);
\draw[thick]  (0.25,0.5) -- (0.25,1);
\draw[thick]  (0.25,0.5) -- (0.5,1);
\draw[thick] (-0.5,1) -- (-0.5,2);
\draw[thick] (0,1) -- (0,2);
\draw[thick] (0.25,1) -- (0.25,2);
\draw[thick] (0.5,1) -- (0.5,2);
\draw[thick,fill=white,dotted] (-0.8,1) rectangle (0.8,1.7) node[pos=.5] {$\scriptstyle(1,3)^\odd$};
\end{tikzpicture}}~\quad.
\end{flalign}
\end{subequations}
The first identity expresses that the binary bracket is a cochain map
and the second identity states that the ternary bracket is a homotopy
witnessing the Jacobi identity. The third identity is an algebraic relation
between the binary and ternary brackets. We would like to note that the 
quadratic identity for the ternary bracket
\begin{flalign}\label{eqn:ternarysquare}
0~=~
\parbox{1.2cm}{\begin{tikzpicture}[scale=0.6]
\draw[fill=black] (0,0) circle (2pt);
\draw[thick] (0,0) -- (0,-0.5);
\draw[thick] (-0.5,1) -- (0,0);
\draw[thick] (0,1) -- (0,0);
\draw[thick] (0.5,0.5) -- (0,0);
\draw[fill=black] (0.5,0.5) circle (2pt);
\draw[thick]  (0.5,0.5) -- (0.25,1);
\draw[thick]  (0.5,0.5) -- (0.5,1);
\draw[thick]  (0.5,0.5) -- (0.75,1);
\draw[thick] (-0.5,1) -- (-0.5,2);
\draw[thick] (0,1) -- (0,2);
\draw[thick] (0.25,1) -- (0.25,2);
\draw[thick] (0.5,1) -- (0.5,2);
\draw[thick] (0.75,1) -- (0.75,2);
\draw[thick,fill=white,dotted] (-0.7,1) rectangle (0.9,1.7) node[pos=.5] {$\scriptstyle(2,3)^\odd$};
\end{tikzpicture}}\quad,
\end{flalign}
which arises from setting $(m,l)=(1,5)$ in \eqref{eqn:semifreeidentities},
holds true automatically. This is due to the fact that the only non-trivial component of 
a degree $-1$ map $\pi^{(1,3)}$ on a $2$-term complex \eqref{eqn:2term} is given by 
$\pi^{(1,3)}:\g^0\otimes\g^0\otimes \g^0\to\g^{-1}$, hence this map composes trivially with itself.
\sk

Our aim is to characterize the $n$-shifted Poisson structures on
the associated Chevalley-Eilenberg algebra $\CE(\g)$ of this Lie $2$-algebra,
for all $n\geq 1$. Using Lemma \ref{lem:degreecounting},
we observe that the $n$-shifted Poisson structures
on the Chevalley-Eilenberg algebra $\CE(\g)$ are trivial for all $n>4$. 
Hence, this problem reduces to the cases $n\in\{1,2,3,4\}$,
which can be treated by applying Corollary \ref{cor:shiftedPoissonsemifree}
and performing suitable degree counting arguments.
\begin{propo}\label{prop:4shifted2Lie}
A $4$-shifted Poisson structure on the Chevalley-Eilenberg algebra $\CE(\g)$ of a Lie $2$-algebra
$\g$ with brackets \eqref{eqn:Lie2bracket} is given by the datum of a degree $-2$ map
\begin{flalign}\label{eqn:42Liestructures}
\pi^{(2,0)}\,=~\parbox{0.4cm}{\begin{tikzpicture}[scale=0.6]
\draw[fill=black] (0,0) circle (2pt);
\draw[thick] (0,0) -- (-0.25,-0.5);
\draw[thick] (0,0) -- (0.25,-0.5);
\end{tikzpicture}}\in~ \hom_{+}^-\big(\bbK,\g^{\otimes 2}\big)^{-2}\quad,
\end{flalign}
which satisfies the identities
\begin{flalign}\label{eqn:42Lieidentities}
\partial\,\parbox{0.4cm}{\begin{tikzpicture}[scale=0.6]
\draw[fill=black] (0,0) circle (2pt);
\draw[thick] (0,0) -- (-0.25,-0.5);
\draw[thick] (0,0) -- (0.25,-0.5);
\end{tikzpicture}}~=~0\quad,\qquad 0~=~
\parbox{0.8cm}{\begin{tikzpicture}[scale=0.6]
\draw[fill=black] (0,0) circle (2pt);
\draw[thick] (0,0) -- (-0.25,-0.5);
\draw[thick] (0,0) -- (0.5,-1);
\draw[fill=black] (-0.25,-0.5) circle (2pt);
\draw[thick] (-0.75,0.5) -- (-0.25,-0.5) -- (-0.25,-1);
\draw[thick] (-0.25,-1) -- (-0.25,-2);
\draw[thick] (0.5,-1) -- (0.5,-2);
\draw[thick,fill=white,dotted] (-0.65,-1) rectangle (0.9,-1.7) node[pos=.5] {$\scriptstyle(1,1)^\even$};
\end{tikzpicture}}~\quad.
\end{flalign}
\end{propo}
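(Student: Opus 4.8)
The plan is to apply Corollary~\ref{cor:shiftedPoissonsemifree} with the weight $m=1$ components fixed to the given Lie $2$-algebra structure, i.e.\ $\pi^{(1,0)}=\pi^{(1,1)}=0$ and $\pi^{(1,l)}=(-1)^{l-1}\ell_l$ for $l\in\{2,3\}$, as described before the proposition, and to determine which of the remaining components $\pi^{(m,l)}$ with $m\geq 2$ can be non-trivial. Since $\g$ is concentrated in degrees $\{-1,0\}$, the complex $\hom(\g^{\otimes l},\g^{\otimes m})\cong\g^{\otimes m}\otimes{\g^{\ast}}^{\otimes l}$ lives in degrees $\{-m,\dots,l\}$, so a non-trivial $\pi^{(m,l)}$ of degree $(1-m)4+2-l=6-4m-l$ forces $-m\leq 6-4m-l$, i.e.\ $3m+l\leq 6$; for $m\geq 2$ this leaves only $(m,l)=(2,0)$, with $\pi^{(2,0)}\in\hom^-_+(\bbK,\g^{\otimes 2})^{-2}$, and $(\g^{\otimes 2})^{-2}=\g^{-1}\otimes\g^{-1}$ is automatically symmetric, consistent with $n=4$ being even. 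This is the borderline case $n=2N$, just outside the range of Lemma~\ref{lem:degreecounting}, and it already pins down \eqref{eqn:42Liestructures}.

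Next I would argue that the identities \eqref{eqn:semifreeidentities} with $m=1$ are unaffected by the presence of $\pi^{(2,0)}$, since the bracket sum in such an identity only pairs components $\pi^{(m_1,l_1)}\tilde{\bullet}\,\pi^{(m_2,l_2)}$ with $m_1=m_2=1$; hence these reduce to the $L_\infty$-axioms \eqref{eqn:Lie2bracketidentities} together with the automatically valid \eqref{eqn:ternarysquare}, which hold by hypothesis. For $(m,l)=(2,0)$ the bracket sum runs over $m_1+m_2=3$, $l_1+l_2=1$; a weight-$2$ factor forces $l_i=0$ while a weight-$1$ factor forces $l_i\in\{2,3\}$, and $\pi^{(1,0)},\pi^{(1,1)},\pi^{(2,1)}$ all vanish, so this identity collapses to $\partial\pi^{(2,0)}=0$. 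For $(m,l)=(2,1)$ the only surviving term is $\pi^{(1,2)}\tilde{\bullet}\,\pi^{(2,0)}$: the companion $\pi^{(2,0)}\tilde{\bullet}\,\pi^{(1,2)}$ dies because its input shuffle set $\mathrm{Sh}(l_1{-}1,l_2)=\mathrm{Sh}(-1,2)$ is empty, and no other pair of non-zero components obeys $m_1+m_2=3$, $l_1+l_2=2$. For $n=4$ the prefactor $(-1)^{((1-m_1)n+2-l_1)(\cdots)}(-1)^{n(m_2-1)(l_1-1)}$ equals $1$, the input shuffle $\mathrm{Sh}(1,0)$ is trivial, and $(-1)^{\vert\rho\vert n}=1$, so the output shuffle $\mathrm{Sh}(m_1,m_2{-}1)=\mathrm{Sh}(1,1)$ becomes exactly the $(1,1)^{\even}$ box; this yields precisely the second identity in \eqref{eqn:42Lieidentities}.

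It then remains to check that no further identity imposes a new condition. For $(m,l)=(2,2)$ the only combinatorially admissible term is $\pi^{(1,3)}\tilde{\bullet}\,\pi^{(2,0)}$ (again $\pi^{(2,0)}\tilde{\bullet}\,\pi^{(1,3)}$ has empty input shuffle set), but this composition vanishes for degree reasons: $\pi^{(2,0)}$ takes values in $\g^{-1}\otimes\g^{-1}$, whereas the degree $-1$ map $\pi^{(1,3)}$ is supported on $\g^0\otimes\g^0\otimes\g^0\to\g^{-1}$, so feeding a $\g^{-1}$-element into one of its slots gives zero. For $(m,l)=(2,l)$ with $l\geq 3$, and for every $m\geq 3$, the same weight--degree bookkeeping shows the bracket sum is empty. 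Hence the Maurer--Cartan equation of Corollary~\ref{cor:shiftedPoissonsemifree} is equivalent to exactly the two identities in \eqref{eqn:42Lieidentities}, as claimed. I expect the only slightly delicate point to be matching the signs and the $(1,1)^{\even}$ symmetrization in the $(2,1)$ identity; everything else is a finite degree count.
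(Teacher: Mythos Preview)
Your proof is correct and follows the same approach as the paper: a degree count using the inequality \eqref{eqn:inequality} specialized to $N=2$, $n=4$ to isolate $\pi^{(2,0)}$, followed by extracting the $(m,l)=(2,0)$ and $(m,l)=(2,1)$ components of \eqref{eqn:semifreeidentities}. You have supplied considerably more detail than the paper (which simply asserts that these two components yield \eqref{eqn:42Lieidentities}), in particular the explicit verification that the $(2,2)$ identity is vacuous because $\pi^{(1,3)}$ annihilates $\g^{-1}$-inputs and that all higher $(m,l)$ identities are empty; this extra care is well placed and the sign analysis for the $(2,1)$ case is accurate.
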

\begin{proof}
Specializing the degree counting inequality \eqref{eqn:inequality} to the present case of
$N=2$ and $n=4$, one obtains
\begin{flalign}
-m\,\leq\,4(1-m)+2-l = 6-4m-l \,\leq\,l\quad.
\end{flalign}
Since $m\geq 2$ and $l\geq 0$, the first inequality is only satisfied for $(m,l)=(2,0)$,
i.e.\ \eqref{eqn:42Liestructures} is the only non-trivial component of a 
shifted Poisson structure in this case. The identities \eqref{eqn:42Lieidentities}
follow directly from the $(m,l) = (2,0)$ and $(m,l)=(2,1)$ components 
of \eqref{eqn:semifreeidentities}.
\end{proof}

\begin{rem}\label{rem:graphicaltoformula}
One of the advantages of our graphical calculus, when compared with element-wise algebraic expressions, 
is that it conveniently absorbs Koszul signs and easily deals with linear maps
taking values in tensor products, without using cumbersome Sweedler-like notations.
Let us illustrate these points by rewriting the result of Proposition \ref{prop:4shifted2Lie}
in terms of element-wise expressions. Since $\pi^{(2,0)}\in \hom_{+}^-\big(\bbK,\g^{\otimes 2}\big)^{-2}\cong
\big(\Sym^2\g\big)^{-2}$ takes values in a tensor product, one has to introduce
a notation of the form $\pi^{(2,0)} = \pi^{(2,0)}_{(1)}\otimes \pi^{(2,0)}_{(2)}$ 
(summation implicitly understood) to access the two individual legs.
The identities in \eqref{eqn:42Lieidentities} then read in element-wise notation as
\begin{subequations}
\begin{flalign}
\dd \pi^{(2,0)} \,&=\, \Big(\dd \pi^{(2,0)}_{(1)}\Big)\otimes \pi^{(2,0)}_{(2)} + 
(-1)^{\vert \pi^{(2,0)}_{(1)}\vert}~\pi^{(2,0)}_{(1)}\otimes \Big(\dd \pi^{(2,0)}_{(2)}\Big) \,=\, 0 \quad,\\[3pt]
0\,&=\, \Big[x,\pi^{(2,0)}_{(1)} \Big]\otimes \pi^{(2,0)}_{(2)} + 
(-1)^{\big(\vert x\vert + \big\vert \pi^{(2,0)}_{(1)}\big\vert\big)\,\big\vert\pi^{(2,0)}_{(2)}\big\vert}~
\pi^{(2,0)}_{(2)} \otimes \Big[x,\pi^{(2,0)}_{(1)} \Big]\quad,
\end{flalign}
\end{subequations}
for all homogeneous $x\in \g$. While in the current simple example such 
expressions are still acceptable and readable, they will
become increasingly more complicated for the $3$-shifted and $2$-shifted
Poisson structures in Propositions \ref{prop:3shifted2Lie} and \ref{prop:2shifted2Lie} below.
\end{rem}

\begin{propo}\label{prop:3shifted2Lie}
A $3$-shifted Poisson structure on the Chevalley-Eilenberg algebra $\CE(\g)$ of a Lie $2$-algebra
$\g$ with brackets \eqref{eqn:Lie2bracket} is given by the datum of two maps
\begin{flalign}\label{eqn:32Liestructures}
\pi^{(2,0)}\,=~\parbox{0.4cm}{\begin{tikzpicture}[scale=0.6]
\draw[fill=black] (0,0) circle (2pt);
\draw[thick] (0,0) -- (-0.25,-0.5);
\draw[thick] (0,0) -- (0.25,-0.5);
\end{tikzpicture}}\in~ \hom_{-}^-\big(\bbK,\g^{\otimes 2}\big)^{-1}\quad,
\qquad \pi^{(2,1)}\,=~\parbox{0.4cm}{\begin{tikzpicture}[scale=0.6]
\draw[fill=black] (0,0) circle (2pt);
\draw[thick] (0,0) -- (-0.25,-0.5);
\draw[thick] (0,0) -- (0.25,-0.5);
\draw[thick] (0,0.5) -- (0,0);
\end{tikzpicture}}\in~ \hom_{-}^{-}\big(\g,\g^{\otimes 2}\big)^{-2}
\end{flalign}
of degree $-1$ and, respectively, degree $-2$, which satisfy the identities
\begin{subequations}\label{eqn:32Lieidentities}
\begin{flalign}
\partial\,\parbox{0.4cm}{\begin{tikzpicture}[scale=0.6]
\draw[fill=black] (0,0) circle (2pt);
\draw[thick] (0,0) -- (-0.25,-0.5);
\draw[thick] (0,0) -- (0.25,-0.5);
\end{tikzpicture}}~&=~0\quad,\qquad
-\partial\,\parbox{0.4cm}{\begin{tikzpicture}[scale=0.6]
\draw[fill=black] (0,0) circle (2pt);
\draw[thick] (0,0) -- (-0.25,-0.5);
\draw[thick] (0,0) -- (0.25,-0.5);
\draw[thick] (0,0.5) -- (0,0);
\end{tikzpicture}}~=~
\parbox{0.8cm}{\begin{tikzpicture}[scale=0.6]
\draw[fill=black] (0,0) circle (2pt);
\draw[thick] (0,0) -- (-0.25,-0.5);
\draw[thick] (0,0) -- (0.5,-1);
\draw[fill=black] (-0.25,-0.5) circle (2pt);
\draw[thick] (-0.75,0.5) -- (-0.25,-0.5) -- (-0.25,-1);
\draw[thick] (-0.25,-1) -- (-0.25,-2);
\draw[thick] (0.5,-1) -- (0.5,-2);
\draw[thick,fill=white,dotted] (-0.65,-1) rectangle (0.9,-1.7) node[pos=.5] {$\scriptstyle(1,1)^\odd$};
\end{tikzpicture}}~\quad,\\[5pt]
0~&=~\parbox{0.4cm}{\begin{tikzpicture}[scale=0.7]
\draw[fill=black] (0,0) circle (2pt);
\draw[thick] (-0.25,0.5) -- (0,0);
\draw[thick] (0.25,0.5) -- (0,0);
\draw[thick] (0,0) -- (0,-0.5);
\draw[fill=black] (0,-0.5) circle (2pt);
\draw[thick] (0,-0.5) -- (-0.25,-1);
\draw[thick] (0,-0.5) -- (0.25,-1);
\end{tikzpicture}}~-~
\parbox{0.9cm}{\begin{tikzpicture}[scale=0.6]
\draw[fill=black] (0,0) circle (2pt);
\draw[thick] (0,1.5)-- (0,0.5) -- (0,0);
\draw[thick] (0,0) -- (-0.25,-0.5);
\draw[thick] (0,0) -- (0.5,-1) -- (0.5,-2);
\draw[fill=black] (-0.25,-0.5) circle (2pt);
\draw[thick] (-0.25,-0.5) -- (-0.75,0.5) -- (-0.75,1.5);
\draw[thick] (-0.25,-0.5) -- (-0.25,-2) ;
\draw[thick,fill=white,dotted] (-0.65,-1) rectangle (0.9,-1.7) node[pos=.5] {$\scriptstyle(1,1)^\odd$};
\draw[thick,fill=white,dotted] (-1.15,0.5) rectangle (0.4,1.2) node[pos=.5] {$\scriptstyle(1,1)^\odd$};
\end{tikzpicture}} ~~~~+~
\parbox{0.9cm}{\begin{tikzpicture}[scale=0.6]
\draw[fill=black] (0.15,0) circle (2pt);
\draw[thick] (0.15,0) -- (-0.25,-0.5);
\draw[thick] (0.15,0) -- (0.5,-1) -- (0.5,-2);
\draw[fill=black] (-0.25,-0.5) circle (2pt);
\draw[thick] (-0.25,-0.5) -- (-0.75,0.5);
\draw[thick] (-0.25,-0.5) -- (-0.25,0.5);
\draw[thick] (-0.25,-0.5) -- (-0.25,-2) ;
\draw[thick,fill=white,dotted] (-0.65,-1) rectangle (0.9,-1.7) node[pos=.5] {$\scriptstyle(1,1)^\odd$};
\end{tikzpicture}}~\quad,\\[5pt]
0~&=~\parbox{0.9cm}{\begin{tikzpicture}[scale=0.6]
\draw[fill=black] (0,0) circle (2pt);
\draw[thick] (0,0) -- (-0.25,-0.5);
\draw[thick] (0,0) -- (0.5,-1);
\draw[fill=black] (-0.25,-0.5) circle (2pt);
\draw[thick] (-0.25,-0.5) -- (-0.5,-1);
\draw[thick] (-0.25,-0.5) -- (0,-1);
\draw[thick] (-0.5,-1) -- (-0.5,-2);
\draw[thick] (0,-1) -- (0,-2);
\draw[thick] (0.5,-1) -- (0.5,-2);
\draw[thick,fill=white,dotted] (-0.8,-1) rectangle (0.8,-1.7) node[pos=.5] {$\scriptstyle(2,1)^\odd$};
\end{tikzpicture}}~\quad.
\end{flalign}
\end{subequations}
\end{propo}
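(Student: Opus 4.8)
The plan is to run the same machine as in Propositions~\ref{prop:4shifted2Lie} and~\ref{prop:2shiftedLie}: invoke Corollary~\ref{cor:shiftedPoissonsemifree} with the weight $m=1$ components fixed by the Lie $2$-algebra structure, namely $\pi^{(1,0)}=\pi^{(1,1)}=0$ and $\pi^{(1,2)},\pi^{(1,3)}$ the brackets of \eqref{eqn:Lie2bracket}, and then cut down both the list of surviving weight-$\geq 2$ components $\pi^{(m,l)}$ and the list of non-vacuous instances of the identities \eqref{eqn:semifreeidentities} by degree counting. Setting $N=2$ and $n=3$ in the inequality \eqref{eqn:inequality} from the proof of Lemma~\ref{lem:degreecounting}, a non-trivial component $\pi^{(m,l)}\in\hom^-_{-}(\g^{\otimes l},\g^{\otimes m})^{(1-m)3+2-l}$ requires $-m\leq 5-3m-l\leq l$; the left inequality is $2m+l\leq 5$, which for $m\geq 2$ leaves only $(m,l)\in\{(2,0),(2,1)\}$. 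Thus the only possibly non-zero weight-$\geq 2$ data are $\pi^{(2,0)}\in\hom^-_{-}(\bbK,\g^{\otimes 2})^{-1}$ and $\pi^{(2,1)}\in\hom^-_{-}(\g,\g^{\otimes 2})^{-2}$, and on a $2$-term complex \eqref{eqn:2term} both of these spaces are indeed non-zero, with boundedness in $l$ automatic. (The subscript of $\hom^-_{\pm}$ here is a minus and the relevant symmetric powers are exterior powers, because $n=3$ is odd.)

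Next I would determine which index pairs $(m,l)$ make the identity \eqref{eqn:semifreeidentities} non-vacuous. Since $\pi^{(1,1)}=0$, the internal differential $\partial$ on $\hom(\g^{\otimes l},\g^{\otimes m})$ is the one induced by $\dd_\g$ and raises cohomological degree by one, so the $(m,l)$-identity is an equation in $\hom^-_{-}(\g^{\otimes l},\g^{\otimes m})^{(1-m)3+3-l}$; the analogous degree count shows this complex vanishes unless $2m+l\leq 6$ and $6-3m\leq 2l$, leaving, for $m\geq 2$, exactly $(m,l)\in\{(2,0),(2,1),(2,2),(3,0)\}$. The identities with $m=1$ involve only weight-$1$ components (since $m_1+m_2-1=1$ forces $m_1=m_2=1$) and, using $\pi^{(1,1)}=0$, reproduce the $L_\infty$-structure identities \eqref{eqn:Lie2bracketidentities}, which hold by hypothesis. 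By the previous paragraph $\pi^{(2,2)}=0$ and $\pi^{(3,0)}=0$, so their $\partial$-terms drop out. For each of the four relevant pairs I would then list the composition terms $\pi^{(m_1,l_1)}\tilde{\bullet}\pi^{(m_2,l_2)}$ with $m_1+m_2-1=m$, $l_1+l_2-1=l$ and $l_1\geq 1$, discarding any summand containing a vanishing factor ($\pi^{(1,0)}$, $\pi^{(1,1)}$, $\pi^{(2,2)}$, $\pi^{(3,0)}$) or a non-existent component ($\pi^{(m',l')}$ with $m'\geq 3$): the pair $(2,0)$ leaves only $\partial\pi^{(2,0)}=0$; the pair $(2,1)$ leaves $\partial\pi^{(2,1)}$ together with the single term $\pi^{(1,2)}\tilde{\bullet}\pi^{(2,0)}$; the pair $(2,2)$ leaves the three terms $\pi^{(2,1)}\tilde{\bullet}\pi^{(1,2)}$, $\pi^{(1,2)}\tilde{\bullet}\pi^{(2,1)}$ and $\pi^{(1,3)}\tilde{\bullet}\pi^{(2,0)}$; and the pair $(3,0)$ leaves the single term $\pi^{(2,1)}\tilde{\bullet}\pi^{(2,0)}$. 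These are precisely the four equations in \eqref{eqn:32Lieidentities}.

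It then remains to draw each $\tilde{\bullet}$ explicitly via Proposition~\ref{propo:compositiongraphicalcalculus} and to read off the shuffle sums in the compact notation \eqref{eqn:shuffle-sums-suppressed}, attaching odd-sign shuffles to both inputs and outputs because $n=3$ is odd; here the shuffle sets $\mathrm{Sh}(l_1-1,l_2)$ and $\mathrm{Sh}(m_1,m_2-1)$ are in each case either trivial or $\mathrm{Sh}(1,1)$ or $\mathrm{Sh}(2,1)$, matching the boxes displayed in \eqref{eqn:32Lieidentities}. I expect the only real work to be the sign bookkeeping: one must check that, for the small values $(m_i,l_i)$ occurring here, the prefactors $(-1)^{((1-m_1)n+2-l_1)((m_2-1)n+l_2-1)}\,(-1)^{n(m_2-1)(l_1-1)}$ and the shuffle signatures in \eqref{eqn:semifreeidentities} collapse to the signs $(-1,+1,-1,\dots)$ displayed in \eqref{eqn:32Lieidentities}, using the (anti)symmetry relations \eqref{eqn:input/outputpermutations} where needed to put each diagram into the displayed ordering of legs. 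Unlike Propositions~\ref{prop:2shiftedLie} and~\ref{prop:1shiftedLie}, no further interpretive statement is required, since the claim here is just the explicit list of data and identities.
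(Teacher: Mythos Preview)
Your proposal is correct and follows the same strategy as the paper: degree counting via \eqref{eqn:inequality} to isolate the surviving components $(2,0)$ and $(2,1)$, then specializing \eqref{eqn:semifreeidentities} at $(m,l)\in\{(2,0),(2,1),(2,2),(3,0)\}$ to obtain the four identities. The one methodological difference is in how you dispose of the remaining identities at $(m,l)=(2,3)$, $(3,1)$, etc.: you argue that the target complex $\hom^-_{-}(\g^{\otimes l},\g^{\otimes m})^{6-3m-l}$ itself vanishes by the analogous degree bound $2m+l\leq 6$, whereas the paper instead records the explicit term-wise vanishings \eqref{eqn:32degreevanishing} using the concrete form $\pi^{(1,3)}\colon\g^0\otimes\g^0\otimes\g^0\to\g^{-1}$ and $\pi^{(2,1)}\colon\g^0\to\g^{-1}\otimes\g^{-1}$. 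Your argument is slightly cleaner, since it avoids having to enumerate which individual compositions (such as $\pi^{(2,1)}\tilde\bullet\pi^{(2,1)}$ at $(3,1)$ or $\pi^{(2,1)}\tilde\bullet\pi^{(1,3)}$ and $\pi^{(1,3)}\tilde\bullet\pi^{(2,1)}$ at $(2,3)$) appear in those higher identities; the paper's approach is more explicit but amounts to the same observation.
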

\begin{proof}
Specializing the degree counting inequality \eqref{eqn:inequality} to the present case of
$N=2$ and $n=3$, one obtains
\begin{flalign}
-m\,\leq\,3(1-m)+2-l = 5-3m-l \,\leq\,l\quad.
\end{flalign}
Since $m\geq 2$ and $l\geq 0$, the first inequality is only satisfied for $(m,l)=(2,0)$
and for $(m,l)=(2,1)$,
i.e.\ \eqref{eqn:32Liestructures} are the only non-trivial components of a 
shifted Poisson structure in this case. 
Observing that, for degree reasons, $\pi^{(1,3)} : \g^0\otimes\g^0\otimes \g^0 \to \g^{-1}$
and $\pi^{(2,1)}: \g^0\to \g^{-1}\otimes \g^{-1}$, one obtains \eqref{eqn:ternarysquare} and
\begin{flalign}\label{eqn:32degreevanishing}
0~=~
\parbox{0.8cm}{\begin{tikzpicture}[scale=0.6]
\draw[fill=black] (0,0) circle (2pt);
\draw[thick] (0,0) -- (0,0.5);
\draw[thick] (0.5,-1) -- (0,0);
\draw[thick] (-0.25,-0.5) -- (0,0);
\draw[fill=black] (-0.25,-0.5) circle (2pt);
\draw[thick]  (-0.25,-0.5) -- (0,-1);
\draw[thick]  (-0.25,-0.5) -- (-0.5,-1);
\draw[thick] (0.5,-1) -- (0.5,-2);
\draw[thick] (0,-1) -- (0,-2);
\draw[thick] (-0.5,-1) -- (-0.5,-2);
\draw[thick,fill=white,dotted] (-0.8,-1) rectangle (0.8,-1.7) node[pos=.5] {$\scriptstyle(2,1)^\odd$};
\end{tikzpicture}}~~\quad,\qquad
0~=~\parbox{0.4cm}{\begin{tikzpicture}[scale=0.7]
\draw[fill=black] (0,0) circle (2pt);
\draw[thick] (-0.25,0.5) -- (0,0);
\draw[thick] (0,0.5) -- (0,0);
\draw[thick] (0.25,0.5) -- (0,0);
\draw[thick] (0,0) -- (0,-0.5);
\draw[fill=black] (0,-0.5) circle (2pt);
\draw[thick] (0,-0.5) -- (-0.25,-1);
\draw[thick] (0,-0.5) -- (0.25,-1);
\end{tikzpicture}}\quad,\qquad 0~=~
\parbox{0.9cm}{\begin{tikzpicture}[scale=0.6]
\draw[fill=black] (0,0) circle (2pt);
\draw[thick] (0,1.5)-- (0,0.5) -- (0,0);
\draw[thick] (-0.25,1.5)-- (-0.25,-0.5);
\draw[thick] (0,0) -- (-0.25,-0.5);
\draw[thick] (0,0) -- (0.5,-1) -- (0.5,-2);
\draw[fill=black] (-0.25,-0.5) circle (2pt);
\draw[thick] (-0.25,-0.5) -- (-0.75,0.5) -- (-0.75,1.5);
\draw[thick] (-0.25,-0.5) -- (-0.25,-2) ;
\draw[thick,fill=white,dotted] (-0.65,-1) rectangle (0.9,-1.7) node[pos=.5] {$\scriptstyle(1,1)^\odd$};
\draw[thick,fill=white,dotted] (-1.15,0.5) rectangle (0.4,1.2) node[pos=.5] {$\scriptstyle(2,1)^\odd$};
\end{tikzpicture}}~~\quad.
\end{flalign}
The identities \eqref{eqn:32Lieidentities}
then follow directly from the $(m,l) = (2,0)$, $(2,1)$, $(2,2)$ and $(3,0)$ 
components of \eqref{eqn:semifreeidentities}.
\end{proof}

\begin{propo}\label{prop:2shifted2Lie}
A $2$-shifted Poisson structure on the Chevalley-Eilenberg algebra $\CE(\g)$ of a Lie $2$-algebra
$\g$ with brackets \eqref{eqn:Lie2bracket} is given by the datum of one degree $0$ map
\begin{subequations}\label{eqn:22Liestructures}
\begin{flalign}
\pi^{(2,0)}\,=~\parbox{0.4cm}{\begin{tikzpicture}[scale=0.6]
\draw[fill=black] (0,0) circle (2pt);
\draw[thick] (0,0) -- (-0.25,-0.5);
\draw[thick] (0,0) -- (0.25,-0.5);
\end{tikzpicture}}\in~ \hom_{+}^-\big(\bbK,\g^{\otimes 2}\big)^{0}\quad,
\end{flalign}
one degree $-1$ map
\begin{flalign}
\pi^{(2,1)}\,=~\parbox{0.4cm}{\begin{tikzpicture}[scale=0.6]
\draw[fill=black] (0,0) circle (2pt);
\draw[thick] (0,0) -- (-0.25,-0.5);
\draw[thick] (0,0) -- (0.25,-0.5);
\draw[thick] (0,0.5) -- (0,0);
\end{tikzpicture}}\in~ \hom_{+}^{-}\big(\g,\g^{\otimes 2}\big)^{-1}\quad,
\end{flalign}
two degree $-2$ maps
\begin{flalign}
\pi^{(2,2)}\,=~\parbox{0.4cm}{\begin{tikzpicture}[scale=0.6]
\draw[fill=black] (0,0) circle (2pt);
\draw[thick] (0,0) -- (-0.25,-0.5);
\draw[thick] (0,0) -- (0.25,-0.5);
\draw[thick] (-0.25,0.5) -- (0,0);
\draw[thick] (0.25,0.5) -- (0,0);
\end{tikzpicture}}\in~ \hom_{+}^{-}\big(\g^{\otimes 2},\g^{\otimes 2}\big)^{-2}
\quad,\qquad
\pi^{(3,0)}\,=~\parbox{0.55cm}{\begin{tikzpicture}[scale=0.6]
\draw[fill=black] (0,0) circle (2pt);
\draw[thick] (0,0) -- (-0.35,-0.5);
\draw[thick] (0,0) -- (0,-0.5);
\draw[thick] (0,0) -- (0.35,-0.5);
\end{tikzpicture}}\in~ \hom_{+}^-\big(\bbK,\g^{\otimes 3}\big)^{-2}\quad,
\end{flalign}
one degree $-3$ map
\begin{flalign}
\pi^{(3,1)}\,=~\parbox{0.55cm}{\begin{tikzpicture}[scale=0.6]
\draw[fill=black] (0,0) circle (2pt);
\draw[thick] (0,0) -- (-0.35,-0.5);
\draw[thick] (0,0) -- (0,-0.5);
\draw[thick] (0,0) -- (0.35,-0.5);
\draw[thick] (0,0) -- (0,0.5);
\end{tikzpicture}}\in~ \hom_{+}^-\big(\g,\g^{\otimes 3}\big)^{-3}\quad,
\end{flalign}
and one degree $-4$ map
\begin{flalign}
\pi^{(4,0)}\,=~\parbox{0.65cm}{\begin{tikzpicture}[scale=0.6]
\draw[fill=black] (0,0) circle (2pt);
\draw[thick] (0,0) -- (-0.525,-0.5);
\draw[thick] (0,0) -- (-0.175,-0.5);
\draw[thick] (0,0) -- (0.175,-0.5);
\draw[thick] (0,0) -- (0.525,-0.5);
\end{tikzpicture}}\in~ \hom_{+}^-\big(\bbK,\g^{\otimes 4}\big)^{-4}\quad.
\end{flalign}
\end{subequations}
These data have to satisfy the following nine identities:
\begin{subequations}\label{eqn:22Lieidentities}
\begin{flalign}
%%
%%(2,1)
%%
-\partial\,\parbox{0.4cm}{\begin{tikzpicture}[scale=0.6]
\draw[fill=black] (0,0) circle (2pt);
\draw[thick] (0,0) -- (-0.25,-0.5);
\draw[thick] (0,0) -- (0.25,-0.5);
\draw[thick] (0,0.5) -- (0,0);
\end{tikzpicture}}~&=~
\parbox{0.8cm}{\begin{tikzpicture}[scale=0.6]
\draw[fill=black] (0,0) circle (2pt);
\draw[thick] (0,0) -- (-0.25,-0.5);
\draw[thick] (0,0) -- (0.5,-1);
\draw[fill=black] (-0.25,-0.5) circle (2pt);
\draw[thick] (-0.75,0.5) -- (-0.25,-0.5) -- (-0.25,-1);
\draw[thick] (-0.25,-1) -- (-0.25,-2);
\draw[thick] (0.5,-1) -- (0.5,-2);
\draw[thick,fill=white,dotted] (-0.65,-1) rectangle (0.9,-1.7) node[pos=.5] {$\scriptstyle(1,1)^\even$};
\end{tikzpicture}}~\quad,\\[5pt]
%%
%%(2,2)
%%
-\partial\,\parbox{0.4cm}{\begin{tikzpicture}[scale=0.6]
\draw[fill=black] (0,0) circle (2pt);
\draw[thick] (0,0) -- (-0.25,-0.5);
\draw[thick] (0,0) -- (0.25,-0.5);
\draw[thick] (-0.25,0.5) -- (0,0);
\draw[thick] (0.25,0.5) -- (0,0);
\end{tikzpicture}}~&=~\parbox{0.4cm}{\begin{tikzpicture}[scale=0.7]
\draw[fill=black] (0,0) circle (2pt);
\draw[thick] (-0.25,0.5) -- (0,0);
\draw[thick] (0.25,0.5) -- (0,0);
\draw[thick] (0,0) -- (0,-0.5);
\draw[fill=black] (0,-0.5) circle (2pt);
\draw[thick] (0,-0.5) -- (-0.25,-1);
\draw[thick] (0,-0.5) -- (0.25,-1);
\end{tikzpicture}}~~-~
\parbox{0.9cm}{\begin{tikzpicture}[scale=0.6]
\draw[fill=black] (0,0) circle (2pt);
\draw[thick] (0,1.5)-- (0,0.5) -- (0,0);
\draw[thick] (0,0) -- (-0.25,-0.5);
\draw[thick] (0,0) -- (0.5,-1) -- (0.5,-2);
\draw[fill=black] (-0.25,-0.5) circle (2pt);
\draw[thick] (-0.25,-0.5) -- (-0.75,0.5) -- (-0.75,1.5);
\draw[thick] (-0.25,-0.5) -- (-0.25,-2) ;
\draw[thick,fill=white,dotted] (-0.65,-1) rectangle (0.9,-1.7) node[pos=.5] {$\scriptstyle(1,1)^\even$};
\draw[thick,fill=white,dotted] (-1.15,0.5) rectangle (0.4,1.2) node[pos=.5] {$\scriptstyle(1,1)^\odd$};
\end{tikzpicture}}~~~~+~
\parbox{0.9cm}{\begin{tikzpicture}[scale=0.6]
\draw[fill=black] (0.15,0) circle (2pt);
\draw[thick] (0.15,0) -- (-0.25,-0.5);
\draw[thick] (0.15,0) -- (0.5,-1) -- (0.5,-2);
\draw[fill=black] (-0.25,-0.5) circle (2pt);
\draw[thick] (-0.25,-0.5) -- (-0.75,0.5);
\draw[thick] (-0.25,-0.5) -- (-0.25,0.5);
\draw[thick] (-0.25,-0.5) -- (-0.25,-2) ;
\draw[thick,fill=white,dotted] (-0.65,-1) rectangle (0.9,-1.7) node[pos=.5] {$\scriptstyle(1,1)^\even$};
\end{tikzpicture}}~\quad,\\[5pt]
%%
%%(2,3)
%%
0~&=~\parbox{0.4cm}{\begin{tikzpicture}[scale=0.7]
\draw[fill=black] (0,0) circle (2pt);
\draw[thick] (-0.25,0.5) -- (0,0);
\draw[thick] (0,0.5) -- (0,0);
\draw[thick] (0.25,0.5) -- (0,0);
\draw[thick] (0,0) -- (0,-0.5);
\draw[fill=black] (0,-0.5) circle (2pt);
\draw[thick] (0,-0.5) -- (-0.25,-1);
\draw[thick] (0,-0.5) -- (0.25,-1);
\end{tikzpicture}}~~+~
\parbox{0.9cm}{\begin{tikzpicture}[scale=0.6]
\draw[fill=black] (0,0) circle (2pt);
\draw[thick] (0.25,1.5)-- (0.25,0.5) -- (0,0);
\draw[thick] (-0.25,1.5)-- (-0.25,0.5) -- (0,0);
\draw[thick] (0,0) -- (-0.25,-0.5);
\draw[thick] (0,0) -- (0.5,-1) -- (0.5,-2);
\draw[fill=black] (-0.25,-0.5) circle (2pt);
\draw[thick] (-0.25,-0.5) -- (-0.75,0.5) -- (-0.75,1.5);
\draw[thick] (-0.25,-0.5) -- (-0.25,-2) ;
\draw[thick,fill=white,dotted] (-0.65,-1) rectangle (0.9,-1.7) node[pos=.5] {$\scriptstyle(1,1)^\even$};
\draw[thick,fill=white,dotted] (-1.15,0.5) rectangle (0.45,1.2) node[pos=.5] {$\scriptstyle(1,2)^\odd$};
\end{tikzpicture}}
~~~~+~
\parbox{0.8cm}{\begin{tikzpicture}[scale=0.6]
\draw[fill=black] (0,0) circle (2pt);
\draw[thick] (0,0) -- (-0.25,-0.5);
\draw[thick] (0,0) -- (0.25,-0.5);
\draw[thick] (-0.5,1) -- (0,0);
\draw[thick] (0.25,0.5) -- (0,0);
\draw[fill=black] (0.25,0.5) circle (2pt);
\draw[thick]  (0.25,0.5) -- (0,1);
\draw[thick]  (0.25,0.5) -- (0.5,1);
\draw[thick] (-0.5,1) -- (-0.5,2);
\draw[thick] (0,1) -- (0,2);
\draw[thick] (0.5,1) -- (0.5,2);
\draw[thick,fill=white,dotted] (-0.8,1) rectangle (0.8,1.7) node[pos=.5] {$\scriptstyle(1,2)^\odd$};
\end{tikzpicture}}
~~~~+~
\parbox{0.9cm}{\begin{tikzpicture}[scale=0.6]
\draw[fill=black] (0,0) circle (2pt);
\draw[thick] (0,1.5)-- (0,0.5) -- (0,0);
\draw[thick] (-0.25,1.5)-- (-0.25,-0.5);
\draw[thick] (0,0) -- (-0.25,-0.5);
\draw[thick] (0,0) -- (0.5,-1) -- (0.5,-2);
\draw[fill=black] (-0.25,-0.5) circle (2pt);
\draw[thick] (-0.25,-0.5) -- (-0.75,0.5) -- (-0.75,1.5);
\draw[thick] (-0.25,-0.5) -- (-0.25,-2) ;
\draw[thick,fill=white,dotted] (-0.65,-1) rectangle (0.9,-1.7) node[pos=.5] {$\scriptstyle(1,1)^\even$};
\draw[thick,fill=white,dotted] (-1.15,0.5) rectangle (0.4,1.2) node[pos=.5] {$\scriptstyle(2,1)^\odd$};
\end{tikzpicture}}~~\quad,\\[5pt]
%%
%%(3,0)
%%
-\partial\,\parbox{0.55cm}{\begin{tikzpicture}[scale=0.6]
\draw[fill=black] (0,0) circle (2pt);
\draw[thick] (0,0) -- (-0.35,-0.5);
\draw[thick] (0,0) -- (0,-0.5);
\draw[thick] (0,0) -- (0.35,-0.5);
\end{tikzpicture}} ~&=~\parbox{0.8cm}{\begin{tikzpicture}[scale=0.6]
\draw[fill=black] (0,0) circle (2pt);
\draw[thick] (0,0) -- (-0.25,-0.5);
\draw[thick] (0,0) -- (0.5,-1);
\draw[fill=black] (-0.25,-0.5) circle (2pt);
\draw[thick] (-0.25,-0.5) -- (-0.5,-1);
\draw[thick] (-0.25,-0.5) -- (0,-1);
\draw[thick] (-0.5,-1) -- (-0.5,-2);
\draw[thick] (0,-1) -- (0,-2);
\draw[thick] (0.5,-1) -- (0.5,-2);
\draw[thick,fill=white,dotted] (-0.8,-1) rectangle (0.8,-1.7) node[pos=.5] {$\scriptstyle(2,1)^\even$};
\end{tikzpicture}}~~\quad,\\[5pt]
%%
%%(3,1)
%%
\label{eqn:22Lieidentities31}
-\partial\,\parbox{0.55cm}{\begin{tikzpicture}[scale=0.6]
\draw[fill=black] (0,0) circle (2pt);
\draw[thick] (0,0) -- (-0.35,-0.5);
\draw[thick] (0,0) -- (0,-0.5);
\draw[thick] (0,0) -- (0.35,-0.5);
\draw[thick] (0,0) -- (0,0.5);
\end{tikzpicture}} ~&=~
\parbox{0.8cm}{\begin{tikzpicture}[scale=0.6]
\draw[fill=black] (0,0) circle (2pt);
\draw[thick] (0,0) -- (0,0.5);
\draw[thick] (0.5,-1) -- (0,0);
\draw[thick] (-0.25,-0.5) -- (0,0);
\draw[fill=black] (-0.25,-0.5) circle (2pt);
\draw[thick]  (-0.25,-0.5) -- (0,-1);
\draw[thick]  (-0.25,-0.5) -- (-0.5,-1);
\draw[thick] (0.5,-1) -- (0.5,-2);
\draw[thick] (0,-1) -- (0,-2);
\draw[thick] (-0.5,-1) -- (-0.5,-2);
\draw[thick,fill=white,dotted] (-0.8,-1) rectangle (0.8,-1.7) node[pos=.5] {$\scriptstyle(2,1)^\even$};
\end{tikzpicture}}~~~+~
\parbox{0.8cm}{\begin{tikzpicture}[scale=0.6]
\draw[fill=black] (0,0) circle (2pt);
\draw[thick] (0,0) -- (-0.25,-0.5);
\draw[thick] (0,0) -- (0.5,-1);
\draw[thick] (0,0) -- (0,-2);
\draw[fill=black] (-0.25,-0.5) circle (2pt);
\draw[thick] (-0.75,0.5) -- (-0.25,-0.5) -- (-0.25,-1);
\draw[thick] (-0.25,-1) -- (-0.25,-2);
\draw[thick] (0.5,-1) -- (0.5,-2);
\draw[thick,fill=white,dotted] (-0.65,-1) rectangle (0.9,-1.7) node[pos=.5] {$\scriptstyle(1,2)^\even$};
\end{tikzpicture}}~~~+~
\parbox{0.8cm}{\begin{tikzpicture}[scale=0.6]
\draw[fill=black] (0,0) circle (2pt);
\draw[thick] (0,0) -- (-0.25,-0.5);
\draw[thick] (0,0) -- (0.5,-1);
\draw[fill=black] (-0.25,-0.5) circle (2pt);
\draw[thick] (-0.75,0.5) -- (-0.25,-0.5) -- (-0.5,-1);
\draw[thick]  (-0.25,-0.5) -- (0,-1);
\draw[thick] (-0.5,-1) -- (-0.5,-2);
\draw[thick] (0,-1) -- (0,-2);
\draw[thick] (0.5,-1) -- (0.5,-2);
\draw[thick,fill=white,dotted] (-0.8,-1) rectangle (0.8,-1.7) node[pos=.5] {$\scriptstyle(2,1)^\even$};
\end{tikzpicture}}~~\quad,\\[5pt]
%%
%%(3,2)
%%
0~&=~\parbox{0.4cm}{\begin{tikzpicture}[scale=0.7]
\draw[fill=black] (0,0) circle (2pt);
\draw[thick] (-0.25,0.5) -- (0,0);
\draw[thick] (0.25,0.5) -- (0,0);
\draw[thick] (0,0) -- (0,-0.5);
\draw[fill=black] (0,-0.5) circle (2pt);
\draw[thick] (0,-0.5) -- (-0.25,-1);
\draw[thick] (0,-0.5) -- (0,-1);
\draw[thick] (0,-0.5) -- (0.25,-1);
\end{tikzpicture}}~~+~
\parbox{0.8cm}{\begin{tikzpicture}[scale=0.6]
\draw[fill=black] (0,0) circle (2pt);
\draw[thick] (0,0) -- (-0.25,0.5);
\draw[thick] (0,0) -- (0.25,0.5);
\draw[thick] (0.5,-1) -- (0,0);
\draw[thick] (-0.25,-0.5) -- (0,0);
\draw[fill=black] (-0.25,-0.5) circle (2pt);
\draw[thick]  (-0.25,-0.5) -- (0,-1);
\draw[thick]  (-0.25,-0.5) -- (-0.5,-1);
\draw[thick] (0.5,-1) -- (0.5,-2);
\draw[thick] (0,-1) -- (0,-2);
\draw[thick] (-0.5,-1) -- (-0.5,-2);
\draw[thick,fill=white,dotted] (-0.8,-1) rectangle (0.8,-1.7) node[pos=.5] {$\scriptstyle(2,1)^\even$};
\end{tikzpicture}}~~~-~
\parbox{0.9cm}{\begin{tikzpicture}[scale=0.6]
\draw[fill=black] (0,0) circle (2pt);
\draw[thick] (0,1.5)-- (0,0.5) -- (0,0);
\draw[thick] (0,0) -- (-0.25,-0.5);
\draw[thick] (0,0) -- (0.5,-1) -- (0.5,-2);
\draw[thick] (0,0)  -- (0,-2);
\draw[fill=black] (-0.25,-0.5) circle (2pt);
\draw[thick] (-0.25,-0.5) -- (-0.75,0.5) -- (-0.75,1.5);
\draw[thick] (-0.25,-0.5) -- (-0.25,-2) ;
\draw[thick,fill=white,dotted] (-0.65,-1) rectangle (0.9,-1.7) node[pos=.5] {$\scriptstyle(1,2)^\even$};
\draw[thick,fill=white,dotted] (-1.15,0.5) rectangle (0.4,1.2) node[pos=.5] {$\scriptstyle(1,1)^\odd$};
\end{tikzpicture}}~~~~-~
\parbox{0.9cm}{\begin{tikzpicture}[scale=0.6]
\draw[fill=black] (0,0) circle (2pt);
\draw[thick] (0,1.5)-- (0,0.5) -- (0,0);
\draw[thick] (0,0) -- (-0.25,-0.5);
\draw[thick] (0,0) -- (0.5,-1) -- (0.5,-2);
\draw[fill=black] (-0.25,-0.5) circle (2pt);
\draw[thick] (-0.25,-0.5) -- (-0.75,0.5) -- (-0.75,1.5);
\draw[thick] (-0.25,-0.5) -- (-0.5,-1) -- (-0.5,-2) ;
\draw[thick] (-0.25,-0.5) -- (0,-1) -- (0,-2) ;
\draw[thick,fill=white,dotted] (-0.8,-1) rectangle (0.8,-1.7) node[pos=.5] {$\scriptstyle(2,1)^\even$};
\draw[thick,fill=white,dotted] (-1.15,0.5) rectangle (0.4,1.2) node[pos=.5] {$\scriptstyle(1,1)^\odd$};
\end{tikzpicture}}~~~~+~
\parbox{0.8cm}{\begin{tikzpicture}[scale=0.6]
\draw[fill=black] (0,0) circle (2pt);
\draw[thick] (0,0) -- (-0.25,-0.5);
\draw[thick] (-0.25,0.5) -- (-0.25,-0.5);
\draw[thick] (0,0) -- (0.5,-1);
\draw[thick] (0,0) -- (0,-2);
\draw[fill=black] (-0.25,-0.5) circle (2pt);
\draw[thick] (-0.75,0.5) -- (-0.25,-0.5) -- (-0.25,-1);
\draw[thick] (-0.25,-1) -- (-0.25,-2);
\draw[thick] (0.5,-1) -- (0.5,-2);
\draw[thick,fill=white,dotted] (-0.65,-1) rectangle (0.9,-1.7) node[pos=.5] {$\scriptstyle(1,2)^\even$};
\end{tikzpicture}}~~\quad,\\[5pt]
%%
%%(4,0)
%%
-\partial\,\parbox{0.65cm}{\begin{tikzpicture}[scale=0.6]
\draw[fill=black] (0,0) circle (2pt);
\draw[thick] (0,0) -- (-0.525,-0.5);
\draw[thick] (0,0) -- (-0.175,-0.5);
\draw[thick] (0,0) -- (0.175,-0.5);
\draw[thick] (0,0) -- (0.525,-0.5);
\end{tikzpicture}}~&=~\parbox{0.8cm}{\begin{tikzpicture}[scale=0.6]
\draw[fill=black] (0,0) circle (2pt);
\draw[thick] (0,0) -- (-0.25,-0.5);
\draw[thick] (0,0) -- (0.5,-1);
\draw[fill=black] (-0.25,-0.5) circle (2pt);
\draw[thick] (-0.25,-0.5) -- (-0.5,-1);
\draw[thick] (-0.25,-0.5) -- (0,-1);
\draw[thick] (-0.25,-0.5) -- (-0.25,-2);
\draw[thick] (-0.5,-1) -- (-0.5,-2);
\draw[thick] (0,-1) -- (0,-2);
\draw[thick] (0.5,-1) -- (0.5,-2);
\draw[thick,fill=white,dotted] (-0.8,-1) rectangle (0.8,-1.7) node[pos=.5] {$\scriptstyle(3,1)^\even$};
\end{tikzpicture}}~~~~+~\parbox{0.8cm}{\begin{tikzpicture}[scale=0.6]
\draw[fill=black] (0,0) circle (2pt);
\draw[thick] (0,0) -- (-0.25,-0.5);
\draw[thick] (0,0) -- (0.5,-1);
\draw[thick] (0,0) -- (0,-2);
\draw[fill=black] (-0.25,-0.5) circle (2pt);
\draw[thick] (-0.25,-0.5) -- (-0.5,-1);
\draw[thick] (-0.25,-0.5) -- (-0.25,-1);
\draw[thick] (-0.5,-1) -- (-0.5,-2);
\draw[thick] (-0.25,-1) -- (-0.25,-2);
\draw[thick] (0.5,-1) -- (0.5,-2);
\draw[thick,fill=white,dotted] (-0.8,-1) rectangle (0.8,-1.7) node[pos=.5] {$\scriptstyle(2,2)^\even$};
\end{tikzpicture}}~~\quad,\\[5pt]
%%
%%(4,1)
%%
0~&=~
\parbox{0.8cm}{\begin{tikzpicture}[scale=0.6]
\draw[fill=black] (0,0) circle (2pt);
\draw[thick] (0,0) -- (0,0.5);
\draw[thick] (0.5,-1) -- (0,0);
\draw[thick] (-0.25,-0.5) -- (0,0);
\draw[fill=black] (-0.25,-0.5) circle (2pt);
\draw[thick]  (-0.25,-0.5) -- (0,-1);
\draw[thick]  (-0.25,-0.5) -- (-0.25,-2);
\draw[thick]  (-0.25,-0.5) -- (-0.5,-1);
\draw[thick] (0.5,-1) -- (0.5,-2);
\draw[thick] (0,-1) -- (0,-2);
\draw[thick] (-0.5,-1) -- (-0.5,-2);
\draw[thick,fill=white,dotted] (-0.8,-1) rectangle (0.8,-1.7) node[pos=.5] {$\scriptstyle(3,1)^\even$};
\end{tikzpicture}}~~~~+~
\parbox{0.8cm}{\begin{tikzpicture}[scale=0.6]
\draw[fill=black] (0,0) circle (2pt);
\draw[thick] (0,0) -- (0,0.5);
\draw[thick] (0,0) -- (0,-2);
\draw[thick] (0.5,-1) -- (0,0);
\draw[thick] (-0.25,-0.5) -- (0,0);
\draw[fill=black] (-0.25,-0.5) circle (2pt);
\draw[thick]  (-0.25,-0.5) -- (-0.25,-1);
\draw[thick]  (-0.25,-0.5) -- (-0.5,-1);
\draw[thick] (0.5,-1) -- (0.5,-2);
\draw[thick] (-0.25,-1) -- (-0.25,-2);
\draw[thick] (-0.5,-1) -- (-0.5,-2);
\draw[thick,fill=white,dotted] (-0.8,-1) rectangle (0.8,-1.7) node[pos=.5] {$\scriptstyle(2,2)^\even$};
\end{tikzpicture}}~~~~+~
\parbox{0.8cm}{\begin{tikzpicture}[scale=0.6]
\draw[fill=black] (0,0) circle (2pt);
\draw[thick] (0,0) -- (-0.25,-0.5);
\draw[thick] (0,0) -- (0.25,-1);
\draw[thick] (0,0) -- (0.5,-1);
\draw[thick] (0,0) -- (0,-2);
\draw[fill=black] (-0.25,-0.5) circle (2pt);
\draw[thick] (-0.75,0.5) -- (-0.25,-0.5) -- (-0.25,-1);
\draw[thick] (-0.25,-1) -- (-0.25,-2);
\draw[thick] (0.25,-1) -- (0.25,-2);
\draw[thick] (0.5,-1) -- (0.5,-2);
\draw[thick,fill=white,dotted] (-0.65,-1) rectangle (0.9,-1.7) node[pos=.5] {$\scriptstyle(1,3)^\even$};
\end{tikzpicture}}~~~~+~
\parbox{0.8cm}{\begin{tikzpicture}[scale=0.6]
\draw[fill=black] (0,0) circle (2pt);
\draw[thick] (0,0) -- (-0.25,-0.5);
\draw[thick] (0,0) -- (0.25,-1) -- (0.25,-2);
\draw[thick] (0,0) -- (0.5,-1);
\draw[fill=black] (-0.25,-0.5) circle (2pt);
\draw[thick] (-0.75,0.5) -- (-0.25,-0.5) -- (-0.5,-1);
\draw[thick]  (-0.25,-0.5) -- (-0.25,-1);
\draw[thick] (-0.5,-1) -- (-0.5,-2);
\draw[thick] (-0.25,-1) -- (-0.25,-2);
\draw[thick] (0.5,-1) -- (0.5,-2);
\draw[thick,fill=white,dotted] (-0.8,-1) rectangle (0.8,-1.7) node[pos=.5] {$\scriptstyle(2,2)^\even$};
\end{tikzpicture}}~~\quad,\\[5pt]
%%
%%(5,0)
%%
0~&=~\parbox{0.8cm}{\begin{tikzpicture}[scale=0.6]
\draw[fill=black] (0,0) circle (2pt);
\draw[thick] (0,0) -- (-0.25,-0.5);
\draw[thick] (0,0) -- (0.25,-1) -- (0.25,-2);
\draw[thick] (0,0) -- (0.5,-1);
\draw[fill=black] (-0.25,-0.5) circle (2pt);
\draw[thick] (-0.25,-0.5) -- (-0.5,-1);
\draw[thick] (-0.25,-0.5) -- (0,-1);
\draw[thick] (-0.25,-0.5) -- (-0.25,-2);
\draw[thick] (-0.5,-1) -- (-0.5,-2);
\draw[thick] (0,-1) -- (0,-2);
\draw[thick] (0.5,-1) -- (0.5,-2);
\draw[thick,fill=white,dotted] (-0.8,-1) rectangle (0.8,-1.7) node[pos=.5] {$\scriptstyle(3,2)^\even$};
\end{tikzpicture}}~~~~+~\parbox{0.8cm}{\begin{tikzpicture}[scale=0.6]
\draw[fill=black] (0,0) circle (2pt);
\draw[thick] (0,0) -- (-0.25,-0.5);
\draw[thick] (0,0) -- (0.5,-1);
\draw[thick] (0,0) -- (0,-2);
\draw[thick] (0,0) -- (0.25,-1) -- (0.25,-2);
\draw[fill=black] (-0.25,-0.5) circle (2pt);
\draw[thick] (-0.25,-0.5) -- (-0.5,-1);
\draw[thick] (-0.25,-0.5) -- (-0.25,-1);
\draw[thick] (-0.5,-1) -- (-0.5,-2);
\draw[thick] (-0.25,-1) -- (-0.25,-2);
\draw[thick] (0.5,-1) -- (0.5,-2);
\draw[thick,fill=white,dotted] (-0.8,-1) rectangle (0.8,-1.7) node[pos=.5] {$\scriptstyle(2,3)^\even$};
\end{tikzpicture}}~~\quad.
\end{flalign}
\end{subequations}
\end{propo}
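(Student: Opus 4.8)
The plan is to specialise Corollary~\ref{cor:shiftedPoissonsemifree} to the semi-free CDGA $\CE(\g)$, i.e.\ to the case in which the weight-$1$ data $\pi^{(1,l)}$ are fixed by the Lie $2$-algebra structure as in Corollary~\ref{cor:Linftystructures} (so $\pi^{(1,0)}=\pi^{(1,1)}=0$, $\pi^{(1,l)}=0$ for $l\ge 4$, and $\pi^{(1,2)},\pi^{(1,3)}$ are the brackets~\eqref{eqn:Lie2bracket}), and then to extract both the possibly non-zero higher-weight components and the resulting identities by degree counting, exactly as in the proofs of Propositions~\ref{prop:4shifted2Lie} and~\ref{prop:3shifted2Lie}. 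First I would run the inequality~\eqref{eqn:inequality} with $N=2$ and $n=2$; it reduces to $m+l\le 4$ (the lower bound $l\ge 2-m$ being automatic for $m\ge 2$), so the only components that can be non-zero are $\pi^{(2,0)},\pi^{(2,1)},\pi^{(2,2)},\pi^{(3,0)},\pi^{(3,1)},\pi^{(4,0)}$, and computing their degrees $(1-m)\cdot 2+2-l$ gives $0,-1,-2,-2,-3,-4$ respectively, which matches~\eqref{eqn:22Liestructures}.

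Next I would determine which of the identities~\eqref{eqn:semifreeidentities} are non-vacuous. The $m=1$ identities reproduce the $L_\infty$-relations~\eqref{eqn:Lie2bracketidentities}, which are part of the hypothesis, so only $m\ge 2$ matters. For $m\ge 2$ the $(m,l)$-identity is an equation in $\hom^-_{+}(\g^{\otimes l},\g^{\otimes m})$ of degree $5-2m-l$ (the degree of $\partial\pi^{(m,l)}$), and since for the $2$-term complex~\eqref{eqn:2term} this hom complex is concentrated in degrees $[-m,l]$, the identity is vacuous unless $-m\le 5-2m-l\le l$. A short count shows that this happens precisely for the nine index pairs $(m,l)\in\{(2,1),(2,2),(2,3),(3,0),(3,1),(3,2),(4,0),(4,1),(5,0)\}$, all other slots giving $0=0$. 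Among these nine, the component $\pi^{(m,l)}$ itself is non-zero only for $(2,1),(2,2),(3,0),(3,1),(4,0)$, which is why in~\eqref{eqn:22Lieidentities} exactly those five identities carry a genuine $\partial$-term while the $(2,3),(3,2),(4,1),(5,0)$ identities begin with ``$0=$''. For each of the nine slots one then expands the double sum over $(m_1,m_2)$ with $m_1+m_2=m+1$ and $(l_1,l_2)$ with $l_1+l_2=l+1$ in~\eqref{eqn:semifreeidentities}, discards every term in which either some $\pi^{(m_i,l_i)}$ vanishes or the composition lands in the zero part of the relevant internal hom of~\eqref{eqn:2term} (for instance $\pi^{(1,3)}$ is non-zero only on three $\g^0$-inputs, $\pi^{(3,1)}$ only on a $\g^0$-input with image in $(\g^{-1})^{\otimes 3}$, and $\pi^{(2,1)}$ sends a $\g^{-1}$-input into $(\g^{-1})^{\otimes 2}$), and reads off the surviving terms together with their Koszul signs and shuffle sums via Proposition~\ref{propo:compositiongraphicalcalculus} and the convention~\eqref{eqn:shuffle-sums-suppressed}; for example the $(5,0)$ slot keeps only $\pi^{(2,1)}\,\tilde{\bullet}\,\pi^{(4,0)}$ and $\pi^{(3,1)}\,\tilde{\bullet}\,\pi^{(3,0)}$, giving the last identity in~\eqref{eqn:22Lieidentities}.

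The hard part will be the bookkeeping in that last step: for each of the nine slots one has to enumerate correctly which composition terms $\pi^{(m_1,l_1)}\,\tilde{\bullet}\,\pi^{(m_2,l_2)}$ survive --- not merely as non-zero components, but as non-zero maps after imposing the grading constraints of~\eqref{eqn:2term} --- and then verify that the accumulated signs from~\eqref{eqn:compositiononmaps} and from (anti)symmetrising the inputs and outputs reproduce the diagrams of~\eqref{eqn:22Lieidentities} on the nose. This is entirely mechanical, so, following the precedent set by the proofs of Propositions~\ref{prop:4shifted2Lie} and~\ref{prop:3shifted2Lie}, I would keep the verification terse, spelling out at most one or two representative slots (say $(2,2)$ and $(5,0)$) and leaving the remaining cases to the reader, noting that no new phenomenon occurs.
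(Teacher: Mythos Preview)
Your proposal is correct and follows essentially the same approach as the paper's own proof: specialise the degree-counting inequality~\eqref{eqn:inequality} to $N=2$, $n=2$ to isolate the six surviving components, then reduce the tower~\eqref{eqn:semifreeidentities} to nine non-vacuous identities by discarding terms that vanish for grading reasons on the $2$-term complex~\eqref{eqn:2term}. Your write-up is in fact more explicit than the paper's (you identify the nine slots and explain why exactly five carry a $\partial$-term), but the logic is identical.
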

\begin{proof}
Specializing the degree counting inequality \eqref{eqn:inequality} to the present case of
$N=2$ and $n=2$, one obtains
\begin{flalign}
-m\,\leq\,2(1-m)+2-l = 4-2m-l \,\leq\,l\quad.
\end{flalign}
Since $m\geq 2$ and $l\geq 0$, the first inequality is only satisfied for $(m,l)=(2,0)$,
$(2,1)$, $(2,2)$, $(3,0)$, $(3,1)$ and $(4,0)$,
i.e.\ \eqref{eqn:22Liestructures} are the only non-trivial components of a 
shifted Poisson structure in this case. In order to reduce the family of identities
\eqref{eqn:semifreeidentities} to \eqref{eqn:22Lieidentities}, one uses that
various terms vanish as a consequence of the specific form of the maps
$\pi^{(1,3)} : \g^0\otimes\g^0\otimes\g^0\to\g^{-1}$, 
$\pi^{(2,0)} : \bbK\to \g^0\otimes\g^0$, 
$\pi^{(2,2)} : \g^0\otimes\g^0\to \g^{-1}\otimes\g^{-1}$, 
$\pi^{(3,1)} : \g^{0}\to \g^{-1}\otimes\g^{-1}\otimes\g^{-1}$
and $\pi^{(4,0)}:\bbK\to \g^{-1}\otimes\g^{-1}\otimes\g^{-1}\otimes\g^{-1}$.
\end{proof}

\begin{rem}\label{rem:1shifted2Lie}
In contrast to the finite characterizations of $n=2,3,4$-shifted Poisson 
structures given above, a $1$-shifted Poisson structure on the 
Chevalley-Eilenberg algebra $\CE(\g)$ of a Lie $2$-algebra $\g$ 
with brackets \eqref{eqn:Lie2bracket} involves in general an 
infinite number of data. Indeed, specializing the degree 
counting inequality \eqref{eqn:inequality} to the present case of
$N=2$ and $n=1$, one obtains
\begin{flalign}
-m\,\leq\,(1-m)+2-l = 3-m-l \,\leq\,l\quad.
\end{flalign}
Analyzing these inequalities, one finds that a 
$1$-shifted Poisson structure consists of 
(a priori non-vanishing) maps
\begin{subequations}
\begin{flalign}
\pi^{(2,l)}\,=\!\!\parbox{1.4cm}{\begin{tikzpicture}[scale=0.6]
%input lines
\draw[thick] (-0.5,0.75) -- (0,0);
\draw[thick] (0,0.75) -- (0,0);
\draw[thick] (0.5,0.75) -- (0,0);
%output lines
\draw[thick] (0,0) -- (-0.25,-0.75);
\draw[thick] (0,0) -- (0.25,-0.75);
\draw[fill=black] (0,0) circle (2pt);
\draw[thick, decorate,decoration={brace,amplitude=5pt}]
  (-0.5,0.85) -- (0.5,0.85) node[midway,yshift=1em]{\text{\footnotesize{$l$ inputs}}};
  \draw[thick, decorate,decoration={brace,amplitude=5pt,mirror}]
  (-0.475,-0.85) -- (0.475,-0.85) node[midway,yshift=-1em]{\text{\footnotesize{$2$ outputs}}};
\end{tikzpicture}}\in~ \hom_{-}^{-}\big(\g^{\otimes l},\g^{\otimes 2}\big)^{1-l}\quad,
\quad \text{for all $l\in\{1,2,3\}$}\quad,
\end{flalign}
and 
\begin{flalign}
\pi^{(m,l)}\,=\!\!\parbox{1.4cm}{\begin{tikzpicture}[scale=0.6]
%input lines
\draw[thick] (-0.5,0.75) -- (0,0);
\draw[thick] (0,0.75) -- (0,0);
\draw[thick] (0.5,0.75) -- (0,0);
%output lines
\draw[thick] (0,0) -- (-0.375,-0.75);
\draw[thick] (0,0) -- (-0.125,-0.75);
\draw[thick] (0,0) -- (0.125,-0.75);
\draw[thick] (0,0) -- (0.375,-0.75);
\draw[fill=black] (0,0) circle (2pt);
%braces
\draw[thick, decorate,decoration={brace,amplitude=5pt}]
  (-0.5,0.85) -- (0.5,0.85) node[midway,yshift=1em]{\text{\footnotesize{$l$ inputs}}};
\draw[thick, decorate,decoration={brace,amplitude=5pt,mirror}]
  (-0.475,-0.85) -- (0.475,-0.85) node[midway,yshift=-1em]{\text{\footnotesize{$m$ outputs}}};
\end{tikzpicture}}\in~ \hom_{-}^{-}\big(\g^{\otimes l},\g^{\otimes m}\big)^{3-m-l}\quad,
\quad\text{for all $m\geq 3$ and $l\in\{0,1,2,3\}$}\quad.
\end{flalign}
\end{subequations}
These data must satisfy the infinite tower of identities which is obtained by specializing
\eqref{eqn:semifreeidentities} to the present case.
\end{rem}

\subsection{\label{subsec:explicit}Explicit examples}
In this subsection we apply the general characterization 
results for shifted Poisson structures from Subsection \ref{subsec:2Lie}
to specific classes of examples of Lie $2$-algebras. 
We shall focus mostly on the cases of $(n=3)$- and $(n=4)$-shifted
Poisson structures, because these are new phenomena of our Lie $2$-algebraic context
which are not present for ordinary Lie algebras, see Lemma \ref{lem:degreecounting}.
\begin{ex}[Abelian Lie $2$-algebras]\label{ex:Abelian}
Let us recall that, for the ordinary Abelian Lie algebra $\bbK$, there exists a $1$-parameter family
of $2$-shifted Poisson structures on $\CE(\bbK)$
which is given by $\pi^{(2,0)}\in \big(\Sym^2\,\bbK\big)^{0}\cong \bbK$,
see also Proposition \ref{prop:2shiftedLie}. 
In stark contrast to this, the
Abelian Lie $2$-algebra $\bbK[1] = (\bbK\stackrel{0}{\longrightarrow}0)$,
which arises for example in the theory of Abelian gerbes \cite{Brylinski}, does not
admit any non-trivial $n$-shifted Poisson structures for all $n\geq 1$. This claim
can be verified from our characterization results in
Propositions \ref{prop:4shifted2Lie}, \ref{prop:3shifted2Lie}, \ref{prop:2shifted2Lie}
and Remark \ref{rem:1shifted2Lie}, together with the observation that, for every $m\geq 2$,
we have $\hom^-_+(\bbK[1]^{\otimes l}, \bbK[1]^{\otimes m}) \cong 0$
as a consequence of $\Sym^m\big(\bbK[1]\big) \cong \big(\bigwedge^m \bbK\big)[m]\cong 0$
for $m\geq 2$.
\sk

A Lie $2$-algebraic analogue of the $1$-parameter family 
$\pi^{(2,0)}\in\big(\Sym^2\,\bbK\big)^{0}\cong \bbK$
of $2$-shifted Poisson structures on $\CE(\bbK)$ can be found
by considering the $2$-dimensional Abelian Lie $2$-algebra $\bbK^2[1] = (\bbK^2\stackrel{0}{\longrightarrow}0)$.
Then, by Proposition \ref{prop:4shifted2Lie}, there exists a $1$-parameter
family of $4$-shifted Poisson structures on $\CE(\bbK^2[1])$
which is given by $\pi^{(2,0)}\in\Sym^2\big(\bbK^2[1]\big)^{-2}
\cong \big(\bigwedge^2\bbK^2\big)^{0}\cong\bbK$. The required doubling of dimensions
is thus a feature which arises from the odd parity of the non-trivial elements of $\bbK^2[1]$,
which turns symmetric powers into antisymmetric ones. Let us further emphasize
that the shift of such shifted Poisson structures
gets doubled from $n=2$ to $n=4$ when passing from ordinary Lie algebras to Lie $2$-algebras.
\end{ex}

\begin{ex}[String Lie $2$-algebras]\label{ex:String}
Let $\h$ be a Lie algebra with Lie bracket $[\,\cdot\,,\,\cdot\,]:\h\otimes\h\to\h$
and choose any $3$-cocycle $\kappa : \h\otimes\h\otimes\h\to\bbK$. Consider the $2$-term
cochain complex
\begin{subequations}
\begin{flalign}
\h_{\kappa}\,:=\,\bbK[1]\oplus\h\,=\,\Big(
\xymatrix{
\bbK \ar[r]^-{0}~&~\h
}\Big)\,\in\,\Ch
\end{flalign}
and endow it with the Lie $2$-algebra
structure (see \eqref{eqn:Lie2bracket} and \eqref{eqn:Lie2bracketidentities})
which is defined by
\begin{align}
\pi^{(1,2)}\,:\, \h_{\kappa}\otimes\h_{\kappa}~&\longrightarrow~\h_{\kappa}\quad, 
& \pi^{(1,3)}\,:\, \h_{\kappa}\otimes\h_{\kappa}\otimes\h_{\kappa}~&\longrightarrow~
\h_{\kappa}\quad,\\
\nn (x,y)~&\longmapsto~[x,y]\quad, & (x,y,z)~&\longmapsto~\kappa(x,y,z)\quad,\\
\nn (a ,y)~&\longmapsto~0\quad, & (a,y,z)~&\longmapsto~0\quad,\\
\nn (a ,b)~&\longmapsto~0\quad, & (a,b,z)~&\longmapsto~0\quad,\\
\nn & & (a,b,c)~&\longmapsto~0\quad,
\end{align}
\end{subequations}
where $x,y,z\in\h$ and $a,b,c\in \bbK[1]$. Such
Lie $2$-algebras appeared first in \cite[Example 6.10]{BaezCrans}
and they are nowadays called string Lie $2$-algebras. Applying our 
results from Propositions \ref{prop:4shifted2Lie} and 
\ref{prop:3shifted2Lie}, we obtain the following
characterizations of the $(n=3)$- and 
$(n=4)$-shifted Poisson structures on $\CE(\h_{\kappa})$:
\begin{itemize}
\item Every $4$-shifted Poisson structure on $\CE(\h_{\kappa})$
is trivial by the same argument $\big(\Sym^2\,\h_{\kappa}\big)^{-2}\cong
\big(\bigwedge^2\bbK\big)^0\cong 0$ as in Example \ref{ex:Abelian}.

\item A $3$-shifted Poisson structure on $\CE(\h_{\kappa})$
is given by two elements
\begin{flalign}
\pi^{(2,0)}\,\in\, \hom_{-}^{-}\big(\bbK,\h_{\kappa}^{\otimes 2}\big)^{-1}\,\cong\,\h\quad,\qquad
\pi^{(2,1)}\,\in\, \hom_{-}^{-}\big(\h_{\kappa},\h_{\kappa}^{\otimes 2}\big)^{-2}\,\cong\,\h^\ast\quad,
\end{flalign}
where $\h^\ast:= \hom(\h,\bbK)$ denotes the dual of $\h$.
Denoting these elements by
\begin{subequations}\label{eqn:3shiftedstring}
\begin{flalign}\label{eqn:3shiftedstring1}
\oone\,\in\, \h\quad,\qquad \langle\,\cdot\,\rangle \,:\, \h\,\longrightarrow\,\bbK\quad,
\end{flalign}
the identities \eqref{eqn:32Lieidentities} for $3$-shifted Poisson structures reduce
to the properties
\begin{flalign}\label{eqn:3shiftedstring2}
[x,\oone]\,=\,0\quad,\qquad
\langle \oone\rangle\,=\,0\quad,\qquad
\big\langle [x,y]\big\rangle\,=\,-\kappa(x,y,\oone)\quad,
\end{flalign}
\end{subequations}
for all $x,y\in\h$. The first property demands that the distinguished
element $\oone\in \h$ is central in the Lie algebra $\h$ 
and the second property demands that it is annihilated by the linear form
$\langle\,\cdot\,\rangle:\h\to\bbK $. The third property
demands that the linear form $\langle\,\cdot\,\rangle$
annihilates the Lie bracket $[\,\cdot\,,\,\cdot\,]$ on $\h$, up to a violation which is determined 
by the restricted the $3$-cocycle $\kappa(\,\cdot\,,\,\cdot\,,\oone)$ on the distinguished element.
\end{itemize}
For completeness, let us also mention that the $2$-shifted Poisson structures 
on $\CE(\h_{\kappa})$ have already been characterized in \cite[Section 4.2]{KempLaugwitzSchenkel}
and we refer the reader to this reference for details.
Furthermore, by Remark \ref{rem:1shifted2Lie}, the $1$-shifted Poisson structures 
on $\CE(\h_{\kappa})$ consist of infinitely many data
and we did not recognize any notable simplifications in the case of string Lie $2$-algebras.
\sk

Let us conclude this example with some further comments about 
the case of $3$-shifted Poisson structures \eqref{eqn:3shiftedstring}. Assuming that
the underlying Lie algebra $\h$ is semisimple, it follows
that the central element $\oone\in\h$
must necessarily be zero $\oone =0$. The properties in \eqref{eqn:3shiftedstring2}
then simplify to the requirement that $\big\langle [x,y]\big\rangle=0$,
for all $x,y\in\h$. Since $[\h,\h]=\h$ for semisimple Lie algebras, it 
follows that the linear form $\langle\,\cdot\,\rangle=0$ must necessarily vanish.
Hence, any $3$-shifted Poisson structure on $\CE(\h_{\kappa})$ is trivial
in the case where the underlying Lie algebra $\h$ is semisimple.
\sk

For a non-semisimple Lie algebra $\h$, there can exist non-trivial
$3$-shifted Poisson structures. Let us consider the following two extremal cases:
1.)~Suppose that $\oone=0$ is trivial. Then the properties in \eqref{eqn:3shiftedstring2} 
reduce to the $1$-cocycle condition $\langle[\,\cdot\,,\,\cdot\,]\rangle = 0$. Hence, every 
non-trivial $1$-cocycle on $\h$ defines a $3$-shifted Poisson structure with $\oone=0$. As a concrete example,
consider a matrix Lie algebra $\h\subseteq \mathfrak{gl}_N(\bbK)$ and observe
that choosing $\oone=0$ and the matrix trace $\langle\,\cdot\,\rangle = \mathrm{Tr}$
defines a $3$-shifted Poisson structure.
2.)~Suppose that $\langle\,\cdot\,\rangle=0$ is trivial. 
For a non-trivial central element $0\neq \oone\in\h$ to exist,
the Lie algebra $\h$ must be a central extension $0\to \bbK\to \h \to \mathfrak{k}\to 0$. 
The third property in \eqref{eqn:3shiftedstring2} then
requires that the $3$-cocycle $\kappa$ entering the string Lie $2$-algebra $\h_{\kappa}$
is induced by the map $\h \to \mathfrak{k}$ from a $3$-cocycle on $\mathfrak{k}$.
If $\kappa$ is chosen of this form, then there exists a  
$3$-shifted Poisson structure defined by 
the central element $\oone\in\h$ picked out by the map $\bbK\to \h$
and the trivial linear form $\langle\,\cdot\,\rangle=0$.
\end{ex}

\begin{ex}[Shifted cotangent Lie $2$-algebras]\label{ex:cotangent}
Let $\h$ be a Lie algebra with Lie bracket $[\,\cdot\,,\,\cdot\,]:\h\otimes\h\to\h$.
Let us endow its dual $\h^\ast:= \hom(\h,\bbK)$ with the coadjoint action
$\ad^\ast : \h\otimes \h^\ast \to \h^\ast$. The shifted cotangent Lie $2$-algebra
is defined by the $2$-term cochain complex
\begin{subequations}
\begin{flalign}
T^\ast[1]\h\,:=\,\h^\ast[1]\oplus \h \,=\, \Big(
\xymatrix{
\h^\ast \ar[r]^-{0}~&~\h
}\Big)\,\in\,\Ch
\end{flalign}
together with the Lie $2$-algebra
structure (see \eqref{eqn:Lie2bracket} and \eqref{eqn:Lie2bracketidentities})
which is defined by the trivial higher bracket $\pi^{(1,3)}=0$ and
\begin{flalign}
\pi^{(1,2)}\,:\,T^\ast[1]\h\otimes T^\ast[1]\h~&\longrightarrow~T^\ast[1]\h\quad,\\
\nn (x,y)~&\longmapsto~[x,y]\quad,\\
\nn (x,\theta)~&\longmapsto~\ad^\ast_{x}(\theta)\quad,\\
\nn (\omega,\theta)~&\longmapsto~0\quad,
\end{flalign}
\end{subequations}
for all $x,y\in\h$ and $\theta,\omega\in\h^\ast[1]$.
Such Lie $2$-algebras are used for example in the 
context of higher-dimensional Chern-Simons theories which are
based on $2$-connections, see e.g.\ \cite{Zucchini} and \cite{SchenkelVicedo}.
Applying our results from Propositions \ref{prop:4shifted2Lie} and
\ref{prop:3shifted2Lie}, we obtain the following
characterizations of the $(n=3)$- and 
$(n=4)$-shifted Poisson structures on $\CE\big(T^\ast[1]\h\big)$:
\begin{itemize}
\item A $4$-shifted Poisson structure on $\CE\big(T^\ast[1]\h\big)$ is given by an element
\begin{flalign}
\pi^{(2,0)}\,\in\,\hom^{-}_{+}\big(\bbK,(T^\ast[1]\h)^{\otimes 2}\big)^{-2}\,\cong\,\text{${\Mwedge}^2$}\h^\ast
\end{flalign}
which is invariant under the tensor product coadjoint action.

\item A $3$-shifted Poisson structure on $\CE\big(T^\ast[1]\h\big)$ is given by two elements
\begin{subequations}\label{eqn:3shiftedcotangent}
\begin{flalign}
\pi^{(2,0)}\,&\in\,\hom^{-}_{-}\big(\bbK,(T^\ast[1]\h)^{\otimes 2}\big)^{-1}\,\cong\,\h^\ast\otimes\h\quad,\\
\pi^{(2,1)}\,&\in\,\hom_{-}^{-}\big(T^\ast[1]\h,(T^\ast[1]\h)^{\otimes 2}\big)^{-2}\,\cong\,
\hom\big(\h, \Sym^2\,\h^\ast\big)\quad.
\end{flalign}
\end{subequations}
Using that the differential $\partial=0$ is trivial in this example, one checks that the 
identities in the first line of \eqref{eqn:32Lieidentities} are equivalent to the 
invariance condition
\begin{subequations}\label{eqn:3shiftedcotangentconditions}
\begin{flalign}\label{eqn:3shiftedcotangentconditions1}
\big(\ad^\ast_x\otimes\id + \id\otimes\ad_x\big)\pi^{(2,0)}\,=\,0\quad,
\end{flalign}
for all $x\in\h$, where $\ad:=[\,\cdot\,,\,\cdot\,]:\h\otimes\h\to \h$ denotes the adjoint action.
The identity in the second line of \eqref{eqn:32Lieidentities}
is equivalent to the condition
\begin{flalign}\label{eqn:3shiftedcotangentconditions2}
\pi^{(2,1)}\big([x,y]\big)\,=\,\big(\ad^\ast_x \otimes \id + \id\otimes\ad^\ast_x\big)\pi^{(2,1)}(y)
-\big(\ad^\ast_y \otimes \id + \id\otimes\ad^\ast_y\big)\pi^{(2,1)}(x)\quad,
\end{flalign}
for all $x,y\in\h$. Note that this is similar to the properties of a Lie bialgebra structure,
with the difference that $\pi^{(2,1)} : \h\to \Sym^2\,\h^\ast$ takes values in the second
symmetric power of the dual $\h^\ast$, in contrast to the second exterior power of $\h$ 
in the case of Lie bialgebras. The identity in the third line of \eqref{eqn:32Lieidentities}
yields the condition
\begin{flalign}\label{eqn:3shiftedcotangentconditions3}
\sum_{\rho\in\mathrm{Sh}(2,1)} \gamma^{\Vec}_{\rho}\Big(\big(\pi^{(2,1)}\otimes \id\big)\pi^{(2,0)}\Big)\,=\,0\quad,
\end{flalign}
\end{subequations}
where $\gamma^{\Vec}$ denotes the symmetric braiding on the category of vector spaces,
i.e.\ it does \textit{not} involve Koszul signs.
(Note that the odd signs $(-1)^{\vert \rho\vert}$ in the sum over shuffles in \eqref{eqn:32Lieidentities}
get compensated by the Koszul signs in the symmetric 
braiding \eqref{eqn:Chbraiding} for $\h^\ast[1]\otimes \h^\ast[1]\otimes \h^\ast[1]$.)
\end{itemize}

We conclude this example by noting that in the context of higher-dimensional
Chern-Simons theory \cite{Zucchini,SchenkelVicedo} the relevant $3$-shifted Poisson
structures are given by the coevaluation map $\pi^{(2,0)} =\mathrm{coev}(1)\in \h^\ast\otimes\h$
and a trivial $\pi^{(2,1)}=0$. Such $3$-shifted Poisson structures are non-degenerate,
hence they have a corresponding $3$-shifted symplectic structure which enters the 
construction of the action functional. Our characterization 
in \eqref{eqn:3shiftedcotangent} and \eqref{eqn:3shiftedcotangentconditions}
shows that this is only a subclass of the $3$-shifted Poisson structures 
on $\CE\big(T^\ast[1]\h\big)$ and in particular there exists more flexibility 
which is given by the datum of the linear map $\pi^{(2,1)}:\h\to\Sym^2\,\h^\ast$.
\end{ex}

\begin{rem}\label{rem:adjustment}
We observe that there are some vague similarities between 
the datum $\pi^{(2,1)}$ contained in a $3$-shifted Poisson structure 
and the \textit{adjustments} from the theory of 
higher connections, see e.g.\ \cite{Gagliardo}, however there
is no precise agreement between these two concepts.
Referring back to Proposition \ref{prop:3shifted2Lie}, let us consider
the case of a general Lie $2$-algebra $\g$. The datum of a $3$-shifted 
Poisson structure can be identified with linear maps $\pi^{(2,0)}: (\g^{-1})^\ast\to \g^0$ 
and $\pi^{(2,1)} : \g^0\to \g^{-1}\otimes \g^{-1}$. We denote the corresponding transposed maps
by $\pi^{(2,0)\ast}: (\g^{0})^\ast\to \g^{-1}$ and $\pi^{(2,1)\ast}: (\g^{-1})^\ast\otimes (\g^{-1})^\ast\to (\g^{0})^\ast$. 
Under the non-degeneracy condition that $\pi^{(2,0)}$ 
is an isomorphism, we can then form the composite map
\begin{flalign}
\kappa\,:=\,\pi^{(2,0)\ast}\circ \pi^{(2,1)\ast} \circ 
\left(\big(\pi^{(2,0)}\big)^{-1}\otimes \big(\pi^{(2,0)}\big)^{-1}\right)\,:\,
\g^0\otimes\g^0~\longrightarrow~\g^{-1}\quad,
\end{flalign}
which takes the same form as an adjustment datum in the sense of \cite[Eqn.\ (3.7)]{Gagliardo}
on the Lie $2$-algebra $\g$. However, this map $\kappa$ does not in general satisfy
the adjustment conditions from \cite[Eqn.\ (3.9)]{Gagliardo}. An easy way to see this is to note that
there exist $3$-shifted Poisson structures with trivial $\pi^{(2,1)}=0$, see e.g.\ Example \ref{ex:cotangent},
leading to $\kappa = 0$, which is incompatible with the adjustment conditions for any Lie $2$-algebra
$\g$ with non-trivial $\ell_2$ or $\ell_3$ bracket.
\end{rem}

%%%%%%%%%%%%%%%%%%%%%%%%%%%%%%%%%%%%%%%%%%%%%%%%
%%%%%%%%%%%%%%%%%%%%%%%%%%%%%%%%%%%%%%%%%%%%%%%%

\section*{Acknowledgments}
We would like to thank the anonymous reviewer for useful comments that helped us
to improve our paper.
C.K.\ is supported by an EPSRC doctoral studentship. 
A.S.\ was supported by the Royal Society (UK) through a Royal Society University 
Research Fellowship (URF\textbackslash R\textbackslash 211015)
and Enhancement Grants (RF\textbackslash ERE\textbackslash 210053 and 
RF\textbackslash ERE\textbackslash 231077).

%%%%%%%%%%%%%%%%%%%%%%%%%%%%%%%%%%%%%%%%%%%%%%%%
%%%%%%%%%%%%%%%%%%%%%%%%%%%%%%%%%%%%%%%%%%%%%%%%

\section*{Data availability statement}
All data generated or analyzed during this study are contained in this document.

\section*{Conflict of interest statement}
On behalf of all authors, the corresponding author states that there is no conflict of interest.

%%%%%%%%%%%%%%%%%%%%%%%%%%%%%%%%%%%%%%%%%%%%%%%%
%%%%%%%%%%%%%%%%%%%%%%%%%%%%%%%%%%%%%%%%%%%%%%%%

%%%%%%%%%%%%%%%%%%%%%%%%

\end{document}